\def\0{\emptyset}
\newtheorem{theorem}{Theorem}[section]
\newtheorem{definition}[theorem]{Definition}
\newtheorem{lemma}[theorem]{Lemma}
\newtheorem{cor}[theorem]{Corollary}
\newtheorem{proposition}[theorem]{Proposition}
\newtheorem{remark}[theorem]{Remark}
\newenvironment{proof}{{\noindent\it Proof.}\quad}{\hfill $\square$\par}
\begin{document}

% --- PAPER INFO ---

\title{Treewidth of generalized Hamming graph, bipartite Kneser graph and generalized Petersen graph}
\author[1]{\small\bf Yichen Wang\thanks{E-mail:  wangyich22@mails.tsinghua.edu.cn}}
\author[2]{\small\bf Mengyu Cao\thanks{E-mail:  myucao@ruc.edu.cn}}
\author[1]{\small\bf Zequn Lv\thanks{E-mail:  lvzq19@mails.tsinghua.edu.cn}}
\author[1]{\small\bf Mei Lu\thanks{E-mail:  lumei@tsinghua.edu.cn}}

%     {\quad
%     {\small\bf Mengyu Cao}\thanks{email:  myucao@ruc.edu.cn}\quad
%     {\small\bf Zequn Lv}\thanks{email:  lvzq19@mails.tsinghua.edu.cn}\quad
%     {\small\bf Mei Lu}\thanks{email: lumei@tsinghua.edu.cn}\\
% }

\affil[1]{\small Department of Mathematical Sciences, Tsinghua University, Beijing 100084, China}
\affil[2]{\small Institute for Mathematical Sciences, Renmin University of China, Beijing 100086, China}

%\author{Yichen Wang}
\date{}

\maketitle\baselineskip 16.3pt

\begin{abstract}
Let $t,q$ and $n$ be positive integers. Write $[q] = \{1,2,\ldots,q\}$.
The generalized Hamming graph $H(t,q,n)$ is the graph whose vertex set is
the cartesian product of $n$ copies of $[q]$
$(q\ge  2)$, where two vertices are adjacent if  their Hamming distance is at most $t$.
In particular, $H(1,q,n)$ is the well-known Hamming graph and $H(1,2,n)$ is the hypercube.
In 2006, Chandran and Kavitha described the asymptotic value of $tw(H(1,q,n))$, where $tw(G)$ denotes the treewidth of  $G$.
In this paper, we give the exact pathwidth of $H(t,2,n)$ and show that $tw(H(t,q,n)) = \Theta(tq^n/\sqrt{n})$ when $n$ goes to infinity.
Based on those results, we show that the treewidth of the bipartite Kneser graph $BK(n,k)$ is $\binom{n}{k} - 1$ when $n$ is sufficient large relative to $k$ and  the bounds of $tw(BK(2k+1,k))$ are given.
Moreover, we present the bounds of the treewidth of generalized Petersen graph.
\end{abstract}

%\textbf{AMS classification: }\textit{05C75, 05C65, 05C05}\vskip 0.3cm

{\bf Keywords:}  treewidth; pathwidth; generalized Hamming graph; bipartite Kneser graph; generalized Petersen graph
\vskip.3cm

% -------------------------------------------------------------
% --------- Here begins INTRODUCTION SECTION ------------------
\section{Introduction}

Treewidth is a well-studied parameter in graph theory.
Many NP-complete problem can be solved in polynomial time on graphs of bounded treewidth~\cite{intro_tw_algorithm_edgecoloring_zhou1996edge,intro_tw_algorithm_listcoloriong_bruhn2016list,intro_tw_algorithm_steinertree_CHIMANI201267}.
Besides, treewidth is also useful in structural graph theory. For example, Robertson and Seymour used it to prove the Graph Minor Theorem~\cite{GraphMinor_1_robertson1983graph,GraphMinor_3_robertson1984graph,GraphMinor_2_robertson1986graph}.
In the past few decades, there has been much literature investigating the treewidth of certain graphs (see for example,~\cite{tw_Kneser_Harvey2014,intro_tw_certaingraph_linegraph_harvey2018treewidth,intro_tw_certaingraph_circlegraph_kloks1993treewidth,intro_tw_certaingraph_geometric_li2017treewidth,intro_tw_certaingraph_productsofconnected_wood2013treewidth}).
However, it is difficult to estimate the treewidth even asymptotically.

Throughout this paper, graphs are finite, simple and undirected.
Let $V(G)$ and $E(G)$ denote the vertex set and edge set of $G$, respectively.
The degree of a vertex $v \in V(G)$ in $G$ is denoted by $d_G(v)$, and we ignore the subscript $G$ in case of no ambiguity.
The maximum and minimum degree of $G$ are denoted as $\Delta(G)$ and $\delta(G)$, respectively.
% For a graph $G$ and a vertex set $S \subseteq V(G)$, let $G[S]$ be the subgraph of $G$ induced by $S$.
Write $[n] = \{1,2,\ldots, n\}$ and denote by $\binom{[n]}{k}$ the family of all $k$-subsets of $[n]$.
The treewidth, pathwidth and bandwidth of graph $G$ are denoted by $tw(G)$, $pw(G)$ and $bw(G)$, respectively.
It is worth mentioning that the isoperimetric problem of the hypercube is an important topic which provides a foundation for our subsequent proof. The readers may refer to~\cite{pw_Hypercube_harper1966optimal, bandwidth_q2inf_harper1999isoperimetric,rashtchian2022edge,hart1976note,beltran2023sharp} for more details.

The \textit{Hamming graph} $H(q,n)$ is the graph on $q^{n}$ vertices, which correspond to all $n$-vectors whose components are from $[q]$ ($q\ge 2$).
When $q=2$, it is more common to treat each vertex of $H(2,n)$ as a binary $n$-vector, that is, an $n$-vector whose components are from $\{0,1\}$.
Two of the vertices in $H(q,n)$ are adjacent only when they differ in just one coordinate.
The \textit{hypercube graph} is a special case of the Hamming graph when $q=2$ which is well-studied in parallel computing, coding theory and many other areas\cite{ aiello1991coding, hayes1989hypercube, jimenez2002hypercube}.

The Hamming distance of two $n$-vectors is the number of coordinates where one differs from the other.
Two vertices in a Hamming graph are adjacent if and only if their Hamming distance is no more than $1$.
If two vertices in $H(q,n)$ is adjacent if and only if their Hamming distance is no more than $t$, then we denote the \textit{generalized Hamming graph} as $H(t,q,n)$.
Then $H(1,q,n)=H(q,n)$ and $H(1,2,n)$ is the hypercube graph. We call $H(t,2,n)$ the \textit{generalized hypercube graph} and we will give the exact value of its pathwidth and bandwidth as Theorem~\ref{thm: bandwidth of generalized Hypercube}. When $t=1$, our result covers the previous results of hypercube graph. It is interesting to see that in the generalized form of hypercube graph, its treewidth still can be described by an exact expression. Having exact form of treewidth is not a common phenomenon, especially with such a complicated form.

% \vspace{.2cm}
\begin{theorem}\label{thm: bandwidth of generalized Hypercube}We have
	\begin{equation}\label{eq: bandwidth of generalized Hypercube}
	\begin{aligned}
		&pw(H(t,2,n)) = bw(H(t,2,n)) \\
		=& \sum_{k = \left\lfloor (n-t)/2 \right\rfloor }^{\left\lfloor (n-t)/2 \right\rfloor +t -1} \binom{n}{k} + \sum_{a=0}^{\left\lfloor (n-t-1)/2 \right\rfloor } \left( \binom{t+2a}{t+a-1} - \binom{t+2a}{a-1} \right).
	\end{aligned}
	\end{equation}
\end{theorem}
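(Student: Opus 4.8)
The plan is to sandwich both parameters between a common value by way of the universal inequality $pw(G)\le bw(G)$, which holds because a bandwidth-$b$ layout $v_1,\dots,v_N$ yields the path decomposition with bags $\{v_i,\dots,v_{i+b}\}$ (each of size $b+1$). Writing $R$ for the right-hand side of \eqref{eq: bandwidth of generalized Hypercube}, it then suffices to prove $pw(H(t,2,n))\ge R$ and $bw(H(t,2,n))\le R$, since together these force $R\le pw\le bw\le R$ and hence equality throughout. Thus no separate bandwidth lower bound is needed: the pathwidth lower bound does double duty.

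For the lower bound I would use the characterization of pathwidth by the vertex separation number, $pw(G)=\min_{\sigma}\max_i|\partial^-(S_i^\sigma)|$, where $\sigma$ ranges over vertex orderings, $S_i^\sigma$ is the length-$i$ prefix, and $\partial^-(S)=\{v\in S: v \text{ has a neighbor (Hamming distance} \le t) \text{ outside } S\}$ is the inner vertex boundary. Since every ordering has a prefix of each size $m$, this yields the isoperimetric lower bound $pw(H(t,2,n))\ge \max_m \min_{|S|=m}|\partial^-(S)|$. The core is then a Harper-type vertex-isoperimetric inequality for the distance-$\le t$ relation: I would show, by a compression/shifting argument, that among all sets of a fixed size the inner boundary is minimized by an initial segment of the simplicial order (vertices sorted by Hamming weight, ties broken colexicographically). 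Evaluating the boundary at the balanced size $m^{*}$, namely the bottom $\lfloor (n-t)/2\rfloor+t-1$ full weight-layers together with a colex-initial part of the next layer, should reproduce $R$ exactly: the $t$ full ``boundary layers'' lying within distance $t$ of the complement contribute the first sum $\sum_k\binom{n}{k}$, while the partial layer and the layer at distance exactly $t$ contribute the ballot-type correction $\sum_a\left(\binom{t+2a}{t+a-1}-\binom{t+2a}{a-1}\right)$ through a Kruskal--Katona / shadow computation in the colex order.

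For the upper bound I would take this same simplicial order as an explicit linear layout and bound the stretch $|\sigma(u)-\sigma(v)|$ of every edge $uv$. Because the endpoints differ in at most $t$ coordinates, their weights differ by at most $t$, so the vertices lying between them in $\sigma$ are confined to at most $t+1$ consecutive weight-layers; a careful count of how many such vertices can occur, exploiting the colex tie-break to control the worst case, should show that the maximum stretch is exactly $R$. Identifying the layout's worst cut with the extremal isoperimetric set from the lower bound is what guarantees that the same value $R$ appears on both sides.

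The main obstacle is twofold and lies entirely in the two ``exact'' steps. First, establishing the vertex-isoperimetric optimality of initial segments for the distance-$\le t$ relation: unlike the classical $t=1$ Harper theorem, the neighborhood of a vertex is a radius-$t$ Hamming ball, so one must verify that the compression/shifting operations do not increase this larger boundary, which is technically delicate. Second, the exact evaluation of the boundary of the balanced segment --- in particular the emergence of the ballot numbers $\binom{t+2a}{t+a-1}-\binom{t+2a}{a-1}$ --- requires a precise colex/shadow analysis together with careful handling of the parity cases encoded by the floor functions; the same bookkeeping must then be reconciled with the edge-stretch count of the upper bound so that both genuinely collapse to the identical expression $R$.
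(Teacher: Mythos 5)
Your plan follows essentially the same route as the paper: the sandwich $R \le pw(H(t,2,n)) \le bw(H(t,2,n)) \le R$, with the pathwidth lower bound coming from the vertex-isoperimetric characterization (the paper's bound $pw(G)\ge b_v(s,G)$, Theorem~\ref{thm: cite pw bv relation}, applied with Harper's theorem that initial segments of the simplicial order are extremal for the distance-$\le t$ boundary) and the upper bound from the maximum edge stretch of that same simplicial order, which is exactly the Hales numbering $\eta^{(n)}$ the paper builds from the matrices $A_k^{(n)}$. The only differences are in execution of the two hard steps you yourself flag: the paper cites Harper for the isoperimetric optimality of this order under the distance-$\le t$ relation rather than reproving it by compression, and it evaluates the extremal value $R$ via a recursion on the Manhattan radii of the adjacency-matrix blocks $M^{(t,n)}_{k,k'}$ with a long induction (Appendices A and B) rather than your proposed colex/shadow computation.
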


The treewidth of $H(q,n)$ is asymptotically $\Theta(q^{n}/\sqrt{n})$ from~\cite{tw_pw_HammingGraph_chandran2006treewidth}. To be more specific, there exists constants $c_1$ and $c_2$ not depending on $q$, such that for sufficiently large $n$, $c_1 q^{n}/\sqrt{n} \le tw(H(q,n)) \le c_2 q^{n}/\sqrt{n}$.
In this paper, we generalize this result to generalized Hamming graph $H(t,q,n)$ as Theorem~\ref{thm: asymptotic tw(H(t,q,n))}.
The previous result about $tw(H(q,n))$ can be derived from our result by letting $t=1$. 
It is interesting to see that the distance variable $t$ in the generalized Hamming graph $H(t,q,n)$ has linear impact on its treewidth.

% \vspace{.2cm}
\begin{theorem}\label{thm: asymptotic tw(H(t,q,n))}
	There exists constant $c_1$ and $c_2$ not depending on $t$ or $q$, such that for any positive integers $t$ and $q$. When $n$ is sufficiently large, we have that
	\[
		c_1tq^n / \sqrt{n} \le tw(H(t,q,n)) \le c_2tq^n / \sqrt{n}.
	\]
\end{theorem}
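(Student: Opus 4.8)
The plan is to establish the upper and lower bounds separately, in both cases exploiting the weight function $W(x)=\sum_{i=1}^{n}x_i$ on $[q]^n$. Its key property is that whenever two vertices are adjacent in $H(t,q,n)$, i.e.\ their Hamming distance is at most $t$, they differ in at most $t$ coordinates and hence $|W(x)-W(y)|\le t(q-1)$. This single observation drives both directions, and the recurring difficulty will be keeping all constants independent of $q$.

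For the upper bound I would build an explicit path decomposition and use $tw\le pw$. Take as the $m$-th bag $B_m=\{x\in[q]^n: m\le W(x)\le m+t(q-1)\}$, a window of $t(q-1)+1$ consecutive weight levels, as $m$ ranges over the integers. Every vertex lies in a bag, the bags containing a fixed vertex form a contiguous interval, and every edge is covered since its endpoints differ in $W$ by at most $t(q-1)$; hence this is a valid path decomposition. Its width is at most $(t(q-1)+1)\cdot \max_s |\{x\in[q]^n:W(x)=s\}|$. The remaining point is a uniform local limit estimate: since $W$ is a sum of $n$ independent variables, each uniform on $[q]$, one shows $\max_s|\{x:W(x)=s\}|\le C\,q^{n-1}/\sqrt{n}$ with an absolute constant $C$ (for instance by bounding $\frac1{2\pi}\int_{-\pi}^{\pi}|\phi(\theta)|^n\,d\theta$, with $\phi$ the characteristic function of the uniform law on $[q]$, uniformly in $q\ge 2$). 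Combining the two displays gives $pw(H(t,q,n))\le c_2\,t q^n/\sqrt n$ with $c_2$ independent of $t$ and $q$.

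For the lower bound I would use the balanced separator characterisation of treewidth: if $tw(G)=w$ then $G$ has a set $X$ of at most $w+1$ vertices whose removal leaves components of size at most $\tfrac23|V(G)|$. Thus it suffices to show that every such balanced separator $X$ of $H(t,q,n)$ has $|X|\ge c_1 t q^n/\sqrt n$. Given $X$, one aggregates components to produce $A$ with $\tfrac13 q^n\le |A|\le \tfrac23 q^n$; since the edges of $H(t,q,n)$ are exactly the Hamming-distance-$\le t$ pairs, no vertex outside $A\cup X$ lies within Hamming distance $t$ of $A$, so the $t$-neighbourhood boundary $\partial^{(t)}(A)$ (vertices at distance at most $t$ from $A$ but not in $A$) satisfies $\partial^{(t)}(A)\subseteq X$. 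The task therefore reduces to the $t$-step vertex-isoperimetric inequality $|\partial^{(t)}(A)|\ge c_1 t q^n/\sqrt n$ for balanced $A$, which I would obtain by telescoping the one-step inequality: writing $A_0=A$ and $A_{i+1}=A_i\cup\partial^{(1)}(A_i)$, we have $|\partial^{(t)}(A)|=\sum_{i=0}^{t-1}|\partial^{(1)}(A_i)|$. The only input needed is the one-step vertex-isoperimetric inequality for $H(q,n)=H(1,q,n)$ — every set of density in $[\delta,1-\delta]$ has vertex boundary at least $c(\delta)\,q^n/\sqrt n$ with $c(\delta)$ independent of $q$ — which is precisely the isoperimetric/balanced-separator estimate underlying the $t=1$ result of Chandran and Kavitha quoted above. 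If $|\partial^{(t)}(A)|$ were below $c_1 t q^n/\sqrt n$, then, because $t=o(\sqrt n)$ for fixed $t$ and large $n$, every $A_i$ would still have density in, say, $[\tfrac13,\tfrac9{10}]$, so each summand would be at least $c(\tfrac1{10})q^n/\sqrt n$ and the total at least $c(\tfrac1{10})\,t q^n/\sqrt n$, a contradiction once $c_1<c(\tfrac1{10})$. This yields $tw(H(t,q,n))\ge c_1 t q^n/\sqrt n$ with $c_1$ independent of $t$ and $q$.

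The main obstacle throughout is the uniformity in $q$. On the upper side it is concentrated in the local limit bound on the size of the largest weight level, which must carry an absolute constant for all $q\ge 2$. On the lower side it lies in having the one-step vertex-isoperimetric constant $c(\delta)$ independent of $q$, together with the control that the intermediate neighbourhoods $A_i$ stay balanced; this is exactly why the regime $t=o(\sqrt n)$, hidden in the hypothesis that $n$ is sufficiently large, is essential.
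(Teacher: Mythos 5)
Your proposal is correct, but it takes a genuinely different route from the paper on the upper bound and a partially different one on the lower bound. For the upper bound the paper never decomposes $H(t,q,n)$ directly: it maps $[q]^n$ onto $\{0,1\}^n$ by halving the alphabet, lifts a path decomposition of $H(t,2,n)$ at a multiplicative cost of $(q/2)^n$ per bag, and then invokes its exact formula $pw(H(t,2,n))=bw(H(t,2,n))$ (Theorem~\ref{thm: bandwidth of generalized Hypercube}, proved via Hales numberings) together with Stirling's approximation. Your sliding weight-window construction with bags $B_m=\{x: m\le W(x)\le m+t(q-1)\}$ is indeed a valid path decomposition, and granted the uniform local limit estimate $\max_s|\{x:W(x)=s\}|\le Cq^{n-1}/\sqrt{n}$ (which is true with an absolute constant, e.g.\ via the Esseen-type characteristic-function bound you indicate, since the variance of the uniform law on $[q]$ is of order $q^2$), it gives $c_2tq^n/\sqrt{n}$ self-containedly, bypassing the entire Hales-numbering machinery of Theorem~\ref{thm: bandwidth of generalized Hypercube} at the price of proving that local limit lemma. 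For the lower bound the paper also argues via isoperimetry, but it quotes Harper's $t$-step inequality for $H(t,q,n)$ directly, namely $b_v(m,H(t,q,n))\ge q^n\min\sum_{i=1}^{t}\binom{n}{r+i}x^{n-r-i}(1-x)^{r+i}$, combined with the Chandran--Kavitha localization of $r$ and Stirling, and it feeds this into their boundary-to-treewidth lemma (Proposition~\ref{prop: lowerbound treewidth counting neighbors}) rather than Corollary~\ref{coro: tw and separator 2/3} --- the same separator mechanism in different packaging. Your telescoping $A_0=A$, $A_{i+1}=A_i\cup\partial^{(1)}(A_i)$ instead derives the $t$-step bound from the one-step bound; it is sound because $|\partial^{(t)}(A)|=\sum_{i=0}^{t-1}|\partial^{(1)}(A_i)|$ holds exactly in the Hamming metric, and your density-control step is legitimate since the theorem's quantifiers let the threshold on $n$ depend on $t$, so $t/\sqrt{n}\to 0$; this density bookkeeping is precisely the extra work that quoting Harper's $t$-step theorem allows the paper to skip. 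Both inputs you leave as black boxes --- the $q$-uniform local limit bound and the $q$-uniform one-step vertex-isoperimetric constant $c(\delta)$ --- are true and citable, the latter being exactly the estimate the paper extracts from Harper~\cite{bandwidth_q2inf_harper1999isoperimetric} and~\cite{tw_pw_HammingGraph_chandran2006treewidth}.
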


\textit{Kneser graph} $K(n,k)$ is the graph whose vertex set is $\binom{[n]}{k}$ and where two vertices are adjacent if and only if the two corresponding sets are disjoint.
The treewidth of Kneser graph is studied by Harvey in 2014~\cite{tw_Kneser_Harvey2014}.
\textit{Generalized Kneser graph}, \textit{$q$-Kneser graph} and \textit{generalized $q$-Kneser graph} are three derived class from Kneser graph and their treewidth can also be exactly described when $n$ is large enough~\cite{tw_qKneser_cao2021treewidth, intro_tw_certaingraph_Generalized_Kneser_cmy2022,metsch2024treewidth}.
\textit{Bipartite Kneser graph} is also an important variant of Kneser graph whose vertex set is $\binom{[n]}{k}$ and $\binom{[n]}{n-k}$ denoted by $BK(n,k)$ where $2k \le n$.
A $k$-subset $A$ and a $(n-k)$-subset $B$ in bipartite Kneser graph is adjacent if $A \subseteq B$.
We will give the exact value of $tw(BK(n,k))$ when $n$ is sufficient large relative to $k$ as Theorem~\ref{thm: tw of BK(n,k) when n is large enough}.
When $n=2k+1$, we give the bounds of the treewidth of the bipartite Kneser graph as Theorem~\ref{thm: tw BK J uplow}.

% \vspace{.2cm}
\begin{theorem}\label{thm: tw of BK(n,k) when n is large enough}
	If $3\binom{n-k}{k} \ge 2\binom{n}{k}$ and $k \ge 2$, then $tw(BK(n,k)) = \binom{n}{k} - 1$.
\end{theorem}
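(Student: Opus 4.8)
Write $m = \binom{n}{k}$ and $d = \binom{n-k}{k}$. The graph $BK(n,k)$ is bipartite with parts $X = \binom{[n]}{k}$ and $Y = \binom{[n]}{n-k}$, each of size $m$, and it is $d$-regular (every $k$-set lies in exactly $d$ of the $(n-k)$-sets and vice versa); the hypothesis $3d \ge 2m$ says precisely that every vertex misses at most $m/3$ of the opposite part. The plan is to prove the matching bounds $tw(BK(n,k)) \le m-1$ and $tw(BK(n,k)) \ge m-1$ separately.

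The upper bound is the easy direction and needs no hypothesis. I would write down one explicit tree decomposition whose underlying tree is a star: the centre carries the bag $X$, and for each $y \in Y$ a leaf adjacent to the centre carries the bag $N(y) \cup \{y\}$, where $N(y) \subseteq X$ is the set of $k$-subsets of $y$. Every edge $\{x,y\}$ lies in the bag of $y$; each $x \in X$ occupies the centre together with every leaf that uses it, so the subtrees are connected; and each $y$ appears in a single bag. The width is $\max\{m-1,\, \max_{y} |N(y)|\} = \max\{m-1, d\}$, and since $d < m$ this equals $m-1$. Hence $tw(BK(n,k)) \le m-1$.

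The lower bound $tw(BK(n,k)) \ge m-1$ is the heart of the statement and is where $3d \ge 2m$ must be used. My plan is to produce a $K_m$ minor of $BK(n,k)$, since a $K_m$ minor forces treewidth at least $m-1$ (equivalently, I would exhibit a bramble of order $m$). I would build the $m$ branch sets as disjoint stars covering all $2m$ vertices: choose an assignment $f \colon Y \to X$ with $f(y) \subseteq y$ for every $y$, and set $V_x = \{x\} \cup \{y : f(y) = x\}$ for each $x \in X$. Each $V_x$ is connected, the sets $V_x$ are pairwise disjoint and partition $V(BK(n,k))$, so contracting every $V_x$ yields a graph on $m$ vertices; this graph is $K_m$ exactly when the branch sets are pairwise adjacent, i.e. for all $x \ne x'$ there is a $y$ with $f(y)=x'$ and $x \subseteq y$, or symmetrically.

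The main obstacle is arranging that every one of the $\binom{m}{2}$ pairs is covered, and this is exactly where the threshold enters. A single assigned $y$ creates an edge from $V_{f(y)}$ to all $V_x$ with $x \subseteq y$, hence covers up to $d-1$ pairs, so the bare counting requirement $m(d-1) \ge \binom{m}{2}$, i.e. $2d \ge m+1$, is implied by $3d \ge 2m$ with room to spare; the surplus in the hypothesis is what should let me realise an actual covering despite overlaps. To control the overlaps I would lean on two facts: first, since the hypothesis forces $n$ to be large relative to $k$, any two $k$-sets $x, x'$ have a large number, at least $\binom{n-2k}{k}$, of common supersets $y \supseteq x \cup x'$, each of which covers the pair $\{x,x'\}$ once it is assigned to $x$ or $x'$; second, the precise obstruction to a pair being covered is cross-intersecting in nature, so Frankl--Tokushige type bounds of the form $|\mathcal F| + |\mathcal G| \le m - d + 1$ for cross-intersecting families $\mathcal F, \mathcal G \subseteq \binom{[n]}{k}$ quantify exactly how a ``bad'' separation of two branch sets would force $d$ to be small. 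Turning this abundance of common supersets into a single assignment $f$ that covers all pairs simultaneously---rather than merely satisfying the averaged count---is the step I expect to be the most delicate, and I would attempt it either by a greedy assignment that prioritises still-uncovered pairs or by a probabilistic choice followed by a re-routing (alteration) of the few offending $(n-k)$-sets.
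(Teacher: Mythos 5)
Your upper bound is correct and complete: the star-shaped decomposition with centre bag $X$ and leaf bags $N(y)\cup\{y\}$ is valid and gives width $\max\{m-1,d\}=m-1$. It is a genuinely different (and arguably more self-contained) route than the paper's, which instead derives $\alpha(BK(n,k))=\binom{n}{k}$ from a perfect matching and invokes Harvey's bound $tw(G)\le\max\{\Delta(G),|V(G)|-\alpha(G)-1\}$. The lower bound, however, has a genuine gap: the whole content of $tw\ge m-1$ is delegated to the unproved claim that some assignment $f$ covers all $\binom{m}{2}$ pairs, and your proposed repairs do not survive quantification at this threshold. A pair $\{x,x'\}$ is covered only if some common superset $y\supseteq x\cup x'$ is assigned to $x$ or $x'$. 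Comparing factor by factor, $(n-2k-i)(n-i)\le (n-k-i)^2$ gives $\binom{n-2k}{k}/\binom{n-k}{k}\le\binom{n-k}{k}/\binom{n}{k}$, so at the threshold $3\binom{n-k}{k}=2\binom{n}{k}$ a uniformly random superset $y_x\supseteq x$ contains a fixed disjoint $x'$ with probability at most $2/3$; hence a fixed disjoint pair stays uncovered with probability at least $1/9$, and since a $\ge 2/3$ fraction of pairs are disjoint, the expected number of uncovered pairs is $\Theta(m^2)$. The union bound is hopeless, the local lemma fails (constant event probabilities against dependency degree $\Theta(m)$), and the alteration step is not ``a few offending sets'': each reassignment of one $y$ repairs at most $d-1<m$ pairs, while the total counting slack $m(d-1)-\binom{m}{2}\approx m^2/6$ is of the same order as the expected deficit, so no routine greedy or alteration argument closes the gap; a genuinely new idea would be needed, and none is supplied.

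Structurally, you are also attempting strictly more than the theorem asserts: $tw\ge m-1$ does not require a $K_m$ minor, and whether $BK(n,k)$ even contains a spanning $K_m$ minor at the bare threshold is left entirely open by your outline. The paper's proof sidesteps the design problem. Assuming $tw(BK(n,k))<\binom{n}{k}-1$, it takes a Robertson--Seymour balanced separator (Corollary~\ref{coro: tw and separator 2/3}): a set $X$ with $|X|<\binom{n}{k}$ and a split $A\cup B$ of the rest with $|V(G)-X|/3\le |A|,|B|\le 2|V(G)-X|/3$ and no $A$--$B$ edges. Writing $A_L,A_R,B_L,B_R$ for the intersections with the two sides, the absence of edges between $A_L$ and $B_R$ makes $A_L$ and the family of complements of $B_R$ cross-intersecting, and Frankl's bound gives $|A_L|+|B_R|\le\binom{n}{k}-\binom{n-k}{k}+1$; a three-case count ($A_R,B_L$ both nonempty, both empty, or exactly one empty, the last using the balance to get $|A|+|B|\le 3|B|$) then forces $|X|\ge\binom{n}{k}$, a contradiction. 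Note that in the last case $|X|\ge 2\binom{n}{k}-3\bigl(\binom{n}{k}-\binom{n-k}{k}\bigr)=3\binom{n-k}{k}-\binom{n}{k}\ge\binom{n}{k}$ \emph{is} literally the hypothesis: the constant $3/2$ in the theorem is the separator's balance constant, not a coverage count, which is why your averaged-counting surplus cannot be the right mechanism. Your instinct that cross-intersecting bounds are the engine was correct, but they must be aimed at the two sides of a separator rather than at obstructions inside a minor model.
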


\begin{theorem}\label{thm: tw BK J uplow}
	There exists two constants $c_1$ and $c_2$ such that for any positive integer $k$,
\[
	c_1\frac{1}{k}\binom{2k+1}{k} \le tw(BK(2k+1, k)) \le tw(J(2k+1,k)) \le c_2\binom{2k+1}{k}.
\]
\end{theorem}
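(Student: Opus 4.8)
The statement splits into three inequalities, and the rightmost one, $tw(J(2k+1,k)) \le c_2\binom{2k+1}{k}$, is immediate: the Johnson graph $J(2k+1,k)$ has exactly $\binom{2k+1}{k}$ vertices, so $tw(J(2k+1,k)) \le \binom{2k+1}{k} - 1$ and $c_2 = 1$ suffices. The plan is therefore to prove the middle comparison $tw(BK(2k+1,k)) \le tw(J(2k+1,k))$ by transporting a tree decomposition, and the left-hand bound $c_1\frac{1}{k}\binom{2k+1}{k} \le tw(BK(2k+1,k))$ by a spectral expansion argument.

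For the middle inequality I would start from an optimal tree decomposition $(T,\{X_i\})$ of $J(2k+1,k)$ of width $w = tw(J(2k+1,k))$, after identifying each level-$k$ vertex of $BK(2k+1,k)$ with the corresponding vertex of $J(2k+1,k)$. The key structural fact is that for a fixed $(k+1)$-set $B$, its $k+1$ many $k$-subsets pairwise intersect in $k-1$ elements, hence form a clique in $J(2k+1,k)$; by the clique-containment property there is a node $i(B)$ with all of them lying in $X_{i(B)}$. I would then attach, for each $(k+1)$-set $B$, a new leaf $t_B$ to $i(B)$ with bag $\{B\}\cup\{A : A\subset B,\ |A|=k\}$ of size $k+2$. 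Vertex and edge coverage of $BK(2k+1,k)$ are immediate (its only edges are the containments $A\subset B$), and running intersection for each level-$k$ vertex $A$ is preserved because $A\in X_{i(B)}$ whenever $A\subset B$, so the new leaf hangs off a node already in $A$'s subtree. Since the maximal cliques of $J(2k+1,k)$ have size $k+2$, we have $w\ge k+1$, so each new bag has size $k+2\le w+1$ and the width is unchanged, giving $tw(BK(2k+1,k))\le w$.

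For the lower bound I would pass through edge expansion and balanced separators. Let $B$ denote the biadjacency matrix of $BK(2k+1,k)$ between the two middle levels; a direct count gives $BB^{\top} = (k+1)I + A\big(J(2k+1,k)\big)$, since two distinct $k$-sets have a common $(k+1)$-neighbour exactly when they are adjacent in the Johnson graph. Using the known spectrum of $J(2k+1,k)$, whose eigenvalues $(k-j)(k+1-j)-j$ decrease in $j$, the two largest eigenvalues of $BB^{\top}$ are $(k+1)^2$ (at $j=0$) and $k^2$ (at $j=1$), so the adjacency eigenvalues of the $(k+1)$-regular graph $BK(2k+1,k)$ satisfy $\lambda_1 = k+1$ and $\lambda_2 = k$. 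The discrete Cheeger inequality then yields edge expansion
\[
 h\big(BK(2k+1,k)\big) \ \ge\ \frac{\lambda_1-\lambda_2}{2} \ =\ \frac12 .
\]
A standard separator argument converts this into a treewidth bound: if $S$ is a $\tfrac23$-balanced separator of $N = 2\binom{2k+1}{k}$ vertices, grouping the components of $G\setminus S$ into two sides produces a set $A$ with $|A|=\Theta(N)$, $|A|\le N/2$ and no edges to the opposite side, so all $\ge h|A| = \Omega(N)$ edges leaving $A$ land in $S$; since the degree is $k+1$, this forces $|S| = \Omega\!\big(N/(k+1)\big)$. As $tw(G)\ge (\text{min balanced separator})-1$, we obtain $tw(BK(2k+1,k)) = \Omega\!\big(\tfrac1k\binom{2k+1}{k}\big)$, which is the claimed bound.

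The main obstacle is the lower bound, and within it the exact determination of the spectral gap: everything hinges on the identity $BB^{\top}=(k+1)I+A(J(2k+1,k))$ together with the monotonicity of the Johnson eigenvalues, which pins $\lambda_2$ at $k$ and makes the gap $\lambda_1-\lambda_2$ equal to the constant $1$ independently of $k$. The passage from expansion to a balanced-separator lower bound and then to treewidth is routine, and the constant $c_1$ is extracted from the Cheeger constant $1/2$ and the degree $k+1$; the only care needed is in balancing the components and checking that a small hypothetical separator would violate the expansion estimate.
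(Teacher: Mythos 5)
Your proof is correct, and while its skeleton (structural transfer for the middle inequality, spectral gap for the lower bound) matches the paper's, each component is implemented differently. For $tw(BK(2k+1,k))\le tw(J(2k+1,k))$ the paper goes through chordal completions (Proposition~\ref{prop: treewidth and chordal relation}): it installs an optimal chordal supergraph $H$ of $J(2k+1,k)$ on the left part of $BK(2k+1,k)$, checks chordality, and bounds the clique number by $\max\{tw(J)+1,\,k+2\}$ using $tw(J)\ge\delta(J)=k(k+1)$; your leaf-attachment of bags $\{B\}\cup\binom{B}{k}$ at a node containing the clique $\binom{B}{k}$ is the tree-decomposition incarnation of the same idea and is equally valid. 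One small imprecision: not all maximal cliques of $J(2k+1,k)$ have size $k+2$ (the cliques $\binom{B}{k}$ are maximal of size $k+1$); you only need that \emph{some} clique has size $k+2$, which the stars through a fixed $(k-1)$-set provide, so $w\ge k+1$ still holds. For $tw(J(2k+1,k))\le c_2\binom{2k+1}{k}$ your trivial bound $tw\le|V|-1$ suffices for the statement, whereas the paper's route via the Hales-ordered slice (Lemma~\ref{lemma: upper bound treewidth of Johnson graph}) buys the sharper asymptotic constant $1/2$. For the lower bound the paper cites the spectral treewidth bound (Proposition~\ref{prop: tw lb spectral}) together with the known characteristic polynomial of the middle cube (Proposition~\ref{prop: spectral middle cube}); you instead derive the spectrum yourself via the correct identity $BB^{\top}=(k+1)I+A(J(2k+1,k))$ and the Johnson eigenvalues $(k-j)(k+1-j)-j$, pinning $\lambda_2=k$, and then re-prove the spectral-to-treewidth passage through Cheeger and balanced separators --- in effect a self-contained proof of the cited proposition in this case, yielding the same order $\Omega\bigl(\tfrac1k\binom{2k+1}{k}\bigr)$. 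The balancing caveat you flag is routine: take the smaller side $A$ of the separator partition so $|A|\le N/2$, and either $|X|\ge N/4$ (done) or $|A|\ge N/4$, whence $|X|\ge h|A|/(k+1)$.
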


\textit{Petersen graph} is a well-studied Kneser graph $K(5,2)$.
\textit{Generalized Petersen graph} is an extension of Petersen graph denoted by $G_{n,k}$ whose vertex set and edge set are
\begin{equation*}	
\begin{aligned}
	V(G_{n,k})&=\{v_1,\hdots,v_n,u_1,\hdots,u_n\},\\ E(G_{n,k})&={\{v_i u_i\}} \cup {\{v_i v_{i+1}\}} \cup {\{u_i u_{i+k}\}}, i=1,2,\hdots,n,
\end{aligned}
\end{equation*}
where subscripts are to be read modulo $n$ and $k < n/2$.
We will show the treewidth of $G_{n,k}$ when $n$ is sufficient large relative to $k$ as Theorem~\ref{thm: tw generalized Petersen}
% We will show that , $2k+1 \le tw(G_{n,k}) \le pw(G_{n,k}) \le 2k+2$.

% \vspace{.2cm}
\begin{theorem}\label{thm: tw generalized Petersen}
	Let $n$ and $k$ be positive integers satisfying that $n\geq 8{(2k+2)}^2$. Then we have
	\[ 
	2k+1 \le tw(G_{n,k}) \le pw(G_{n,k}) \le 2k+2.
	\]
\end{theorem}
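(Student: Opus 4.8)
The middle inequality $tw(G_{n,k})\le pw(G_{n,k})$ holds for every graph, so only the two outer estimates need work. For the upper bound I would produce a linear layout of $V(G_{n,k})$ of vertex separation at most $2k+2$ and invoke $pw=vs$ (pathwidth equals the vertex separation number). Order the vertices as $u_1,v_1,u_2,v_2,\dots,u_n,v_n$ and, for each prefix, record the set of already placed vertices that still have a neighbour to the right. For a cut in the interior of the order this ``active set'' consists of: the outer vertex $v_i$ (awaiting $v_{i+1}$) together with $v_1$ (awaiting the wrap neighbour $v_n$); the $k$ inner vertices $u_{i-k+1},\dots,u_i$ awaiting their forward neighbour $u_{j+k}$; and the $k$ inner vertices $u_1,\dots,u_k$ awaiting their wrap neighbour $u_{j-k}$. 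These number $2+k+k=2k+2$, and near the two ends of the order the two windows of $u$'s overlap or shrink, so the active set never exceeds $2k+2$. Hence $pw(G_{n,k})=vs(G_{n,k})\le 2k+2$; this needs only $n>2k$, so the hypothesis on $n$ is not binding here.

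For the lower bound I would use the balanced–separator form of treewidth: if $tw(G)\le t$ then, for unit weights, $G$ has a set $B$ with $|B|\le t+1$ such that every component of $G-B$ has at most $|V(G)|/2$ vertices. Taking $t=2k$, it suffices to show that deleting any $S\subseteq V(G_{n,k})$ with $|S|\le 2k+1$ leaves a component with more than $n$ vertices, which yields $tw(G_{n,k})\ge 2k+1$. The geometry to exploit is that $G_{n,k}$ is a cyclic band in which exactly $k+1$ pairwise vertex-disjoint edges cross each meridian: between positions $i$ and $i+1$ these are $v_iv_{i+1}$ and the edges $u_ju_{j+k}$ with $i-k<j\le i$.

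Concretely I would argue as follows. Let $T$ be the set of positions $i$ with $v_i\in S$ or $u_i\in S$, so $|T|\le 2k+1$; the complementary ``free'' positions split into at most $2k+1$ cyclic arcs, and each arc is internally connected in $G-S$ through its outer path and the spokes. Call an arc \emph{long} if it has at least $k$ positions. The crucial structural claim is that any two long arcs are joined in $G$ by $2k+2$ vertex-disjoint paths: the positions between them split into two complementary sub-bands, each a position-interval whose end-to-end vertex connectivity equals the meridian value $k+1$ (inside an interval the $k$ inner edges behave like a matching, independently of $\gcd(n,k)$), and the two sub-bands are vertex-disjoint. Since $|S|\le 2k+1<2k+2$, Menger then forces all long arcs into a single component $K$ of $G-S$. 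Finally a counting step gives $|K|>n$: the short arcs and $T$ occupy at most $(2k+1)k+(2k+1)=(2k+1)(k+1)$ positions, each long free position contributes both its $v$ and its $u$ to $K$, and $n\ge 8(2k+2)^2>2(2k+1)(k+1)$ guarantees $|K|\ge 2\bigl(n-(2k+1)(k+1)\bigr)>n$, contradicting that $S$ is balanced.

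The main obstacle is precisely the structural claim that two long free arcs admit $2k+2$ vertex-disjoint connecting paths; everything else is bookkeeping. The delicate point is that the inner edges form $\gcd(n,k)$ long cycles winding around the whole graph, so a naive ``one path per inner thread'' count is too weak, giving only $2\gcd(n,k)+2$. The fix is to separate the two meridians and work inside each position-interval sub-band, where the winding is irrelevant and the connectivity is cleanly $k+1$; making this rigorous, and verifying that the arcs one actually uses are genuinely long (which is exactly where $n\ge 8(2k+2)^2$ is spent), is the technical heart of the argument.
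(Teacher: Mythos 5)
Your proposal is correct, and the lower bound takes a genuinely different route from the paper. The paper certifies $tw(G_{n,k})\ge 2k+1$ from below via a bramble: for $t=\lceil n/(2k+2)\rceil$ it takes the $n$ hook-shaped connected subgraphs $B_i=G_{n,k}[\{v_i,\dots,v_{i+t},u_{i+t},u_{i+t+k},\dots,u_{i+t+tk}\}]$, checks pairwise touching by a four-case analysis, and bounds the order of the bramble by the hypergraph transversal inequality $\tau(H)\ge m(H)/\Delta(H)=n/(t+1)$, which the hypothesis $n\ge 8(2k+2)^2$ pushes strictly above $2k+1$; Seymour--Thomas duality (the paper's Proposition 5.1) then gives the bound. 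You instead argue through the contrapositive with the Robertson--Seymour balanced separator (the paper's own Proposition 2.4), showing no set of $\le 2k+1$ vertices can be balanced because all long free arcs are forced into one huge component by $2k+2$ pairwise disjoint arc-to-arc paths ($k$ inner threads of distinct residues mod $k$ plus the outer path, per side). Your construction of these paths is sound: inside a position interval the winding by $\gcd(n,k)$ is indeed irrelevant, an arc of $\ge k$ consecutive free positions meets every residue class and every arithmetic progression of step $k$, and even if two paths share an endpoint inside an arc this is harmless since $S$ avoids arc positions, so each vertex of $S$ kills at most one path. What each approach buys: the bramble proof is shorter bookkeeping once the touching cases are done, while your separator argument actually needs only $n>2(2k+1)(k+1)$ rather than $n\ge 8(2k+2)^2$, so it proves the lower bound under a visibly weaker hypothesis. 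For the upper bound the two arguments are essentially the same sliding-window idea in different clothing: the paper deletes the set $A=\{v_1,u_1,\dots,u_k\}$, builds the window bags $X_i,Y_i,Z_i,W_i$ of size $\le k+2$, and adds $A$ to every bag; your layout $u_1,v_1,u_2,v_2,\dots$ keeps exactly the wrap vertices $v_1,u_1,\dots,u_k$ permanently active, and the identity $pw=vs$ (Kinnersley) converts the $2k+2$ bound on the active sets into $pw(G_{n,k})\le 2k+2$, matching the paper's width count $|A|+(k+2)-1=2k+2$.
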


Theorem~\ref{thm: tw of BK(n,k) when n is large enough},~\ref{thm: tw BK J uplow} and~\ref{thm: tw generalized Petersen} can be viewed as different generalizations of Kneser graph. And in Theorem~\ref{thm: tw BK J uplow}, it is interesting to see that the treewidth of the bipartite Kneser graph has a close relationship with that of the well-studied Johnson graph. The treewidth of Johnson graph $J(n,k)$ is also an interesting topic.
When $k=2$, $tw(J(n,2))$ and $pw(J(n,2))$ have exact formulas while for other $k$ it remains unknown~\cite{fabila2024treewidth}.
Our result may help to have a deeper understanding of the treewidth of Johnson graph.

The rest of this paper is organized as follows.
In Section~\ref{section: treewidth of generalized Hamming graph}, we give a proof of the treewidth of generalized Hamming graph~(Theorem~\ref{thm: bandwidth of generalized Hypercube} and Theorem~\ref{thm: asymptotic tw(H(t,q,n))}).
In this part, we mainly use properties of bandwidth and techniques of Hales numbering and the isoperimetric problems by Harper~\cite{pw_Hypercube_harper1966optimal,bandwidth_q2inf_harper1999isoperimetric}.

In Section~\ref{section: treewidth of bipartite Kneser and Johnson}, we study the treewidth of the bipartite Kneser graph and Johnson graph~(Theorem~\ref{thm: tw of BK(n,k) when n is large enough} and Theorem~\ref{thm: tw BK J uplow}).
In this part, we mainly use the techniques of separators, properties of cross-intersecting families and chordal completions. Since the Johnson graph can be viewed as a slice of generalized Hamming graph, we need the results in Section~\ref{section: treewidth of generalized Hamming graph} to prove our results.
% In Section~\ref{section: tw strong production of Kneser graph}, we give the exact treewidth of strong production of Kneser graphs.

In Section~\ref{section: tw generalized Petersen graph}, we study the treewidth of generalized Petersen graph~(Theorem~\ref{thm: tw generalized Petersen}).
In this part, we mainly use brambles and path-decomposition constructions.

% --------------------------------------------------------------
% --------- Here begins PRELIMINARIES AND DEFINITIONS SECTION ------------------
\section{Preliminaries and definitions}

In this section, we give  definitions involving in treewidth, pathwidth and bandwidth of a graph $G(V,E)$.  %The treewidth of a graph is defined through
%the concept of tree-decomposition.

\vskip.2cm
\begin{definition}\label{def: treewidth}
A tree-decomposition of a graph $G(V, E)$ is a pair $(X, T)$,
where $T(I, F)$ is a tree with vertex set $I$ and edge set $F$, and $X = \{X_i \mid i \in I\}$ is a family of subsets of $V$, one for each node of $T$, such that:
\begin{itemize}
	\item $\bigcup \limits_{i \in I}X_i = V$.
	\item For each edge $uv \in E$, there exists an $i \in I$ such that $u,v \in X_i$.
	\item For all $i,j,k \in I$, if $j$ is on the path from $i$ to $k$ in $T$, then $X_i \cap X_k \subset X_j$.
\end{itemize}
\end{definition}
The width of a tree-decomposition $(X, T)$ is $\max \limits_{i\in I} |X_i| - 1$.
The treewidth of a graph $G$ is the minimum treewidth over all possible tree-decompositions of $G$ and denoted by $tw(G)$.
The problem of deciding whether a graph has tree decomposition of treewidth at most $k$ is NP-complete~\cite{tw_complexity_arnborg1987complexity}.
However, there is an exact algorithm finding treewidth of given graph $G$ when taking $tw(G)$ as a constant~\cite{tw_exact_algo_1_bodlaender1993linear, tw_exact_algo_2_korhonen2023improved}.
A path decomposition of $G$ is a tree decomposition $(X,T)$ in which $T$ is required to be a path.
The pathwidth of $G$ is defined to be the minimum width over all path decompositions of $G$ and is denoted by $pw(G)$.

A bijection $\phi~:~ V \rightarrow \{1,2,\ldots,n\}$ is called an ordering of the vertices of $G$ (in short, an ordering of $G$).
Then for any edge $e = \{u,v\} \in E$, let $\Delta(e,\phi) = |\phi(u) - \phi(v)|$.

\vspace{.2cm}
\begin{definition}
	A bandwidth of a graph $G(V,E)$, denoted by $bw(G)$, is the minimum over all possible orderings $\phi$ of $V$ of the maximum value of $\Delta(e,\phi)$ over all edges $e \in E$.
	That is,
	\[
		bw(G) = \min_{\phi} \max_{e \in E} \Delta(e, \phi).
	\]
\end{definition}
There are important inequalities between treewidth, pathwidth and bandwidth as following.

\vspace{.2cm}
\begin{proposition}[\cite{bandwidth_relation_bodlaender1998partial}]
	For any graph $G$,
	\begin{equation}
		\label{eq: tw pw bw inequality}
		tw(G) \le pw(G) \le bw(G).
	\end{equation}
\end{proposition}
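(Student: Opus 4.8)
The plan is to establish the two inequalities separately; the first is immediate and the second rests on a single explicit construction. For $tw(G) \le pw(G)$, I would simply observe that a path is a tree, so every path decomposition of $G$ is in particular a tree decomposition of $G$. Minimizing the width over the (restricted) family of path decompositions can therefore only yield something at least as large as minimizing over all tree decompositions, which is exactly $tw(G) \le pw(G)$; nothing further is required here.

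For $pw(G) \le bw(G)$, write $n = |V(G)|$ and $b = bw(G)$, and fix an ordering $\phi \colon V \to \{1, \ldots, n\}$ realizing the bandwidth, so that $|\phi(u) - \phi(v)| \le b$ for every edge $\{u,v\} \in E$. Since any ordering forces $b \le n-1$, I may identify each vertex with its $\phi$-label and build a ``sliding window'' path decomposition: let $T$ be the path on nodes $1, 2, \ldots, n-b$ and put
\[
	X_i = \{i, i+1, \ldots, i+b\} \quad (1 \le i \le n-b).
\]
Each bag has exactly $b+1$ elements, so this decomposition has width $(b+1)-1 = b$; it then remains to verify the three axioms of Definition~\ref{def: treewidth}, after which $pw(G) \le b = bw(G)$ follows at once.

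The verification is the only place demanding care, and it is routine. Coverage holds because label $1$ lies in $X_1$, label $n$ lies in $X_{n-b}$, and every intermediate label lies in some window. For the edge axiom, take $\{u,v\} \in E$ with $u < v$; the bandwidth bound gives $v \le u+b$, so both endpoints lie in $X_u$ when $u \le n-b$, while both lie in $X_{n-b} = \{n-b, \ldots, n\}$ when $u > n-b$. For the interpolation axiom, a fixed label $j$ belongs to $X_i$ precisely when $\max(1, j-b) \le i \le \min(j, n-b)$, i.e. for a block of consecutive nodes of $T$; hence the bags containing $j$ form a subpath of $T$, which is exactly the requirement that $X_i \cap X_k \subseteq X_j$ whenever $j$ lies between $i$ and $k$ on $T$. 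I expect the main (and essentially only) obstacle to be the boundary bookkeeping in the edge axiom — handling edges whose left endpoint sits near the end of the ordering — but this is dispatched cleanly by the single case distinction above, and the degenerate edgeless case ($b=0$) reduces to singleton bags.
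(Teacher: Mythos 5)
Your proof is correct. The paper itself offers no argument for this proposition --- it is imported directly from the cited reference of Bodlaender --- and your two steps (a path decomposition is a tree decomposition, hence $tw(G) \le pw(G)$; a bandwidth-$b$ ordering yields the sliding-window bags $X_i = \{i,\ldots,i+b\}$ of width $b$, hence $pw(G) \le bw(G)$) are exactly the standard proof found there, with the boundary cases ($u > n-b$ in the edge axiom, and the edgeless $b=0$ graph) handled correctly.
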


Let $X \subseteq V(G)$ be a subset of vertices and $G[X]$ be the subgraph induced in $G$ by $X$. Define $G-X = G[V(G)-X]$.
Given $p \in (0,1)$, define the \emph{$p$-separator} of $G$ to be a subset $X \subseteq V(G)$ such that
% $V-X$ can be partitioned into two disjoint sets $A$ and $B$ such that there is no edge between $A$ and $B$ and each part contains at most $p|V-X|$ vertices.
no  component of $G-X$ contains more than $p|V(G)-X|$ vertices.
Proposition~\ref{prop: tw and separator relation} describes the relationship between treewidth and separator.

\vspace{.2cm}
\begin{proposition}[\cite{GraphMinor_2_robertson1986graph}]\label{prop: tw and separator relation}
	For any graph $G$, there exists a $1/2$-separator of $G$ with at most $tw(G) +1$ vertices.
\end{proposition}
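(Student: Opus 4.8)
The plan is to read the separator off an optimal tree decomposition by a centroid argument on the decomposition tree. First I would fix a tree decomposition $(X,T)$ of $G$ of width exactly $tw(G)$, so that $|X_i|\le tw(G)+1$ for every node $i$; after contracting redundant edges I may assume the decomposition is \emph{minimal}, in the sense that $X_i\not\subseteq X_j$ for every edge $\{i,j\}\in F$ (contracting an edge with $X_i\subseteq X_j$ does not increase the width, so this reduction is harmless). Write $n=|V(G)|$. The key structural fact I would record is the branch decomposition around a node: for a node $i$ and a neighbour $j$, let $B_{ij}$ be the set of vertices of $G$ lying in some bag of the component of $T-i$ containing $j$, but not in $X_i$. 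Using the subtree/path property of Definition~\ref{def: treewidth}, the branches $B_{ij}$ (over neighbours $j$ of $i$) are pairwise disjoint, partition $V(G)\setminus X_i$, and no edge of $G$ joins two different branches; hence every connected component of $G-X_i$ is contained in a single branch $B_{ij}$. Consequently it suffices to find one node $i$ all of whose branches satisfy $|B_{ij}|\le\tfrac12\,|V(G)\setminus X_i|$, for then $X_i$ is the required $1/2$-separator.

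To locate such a node I would run the standard centroid (minimum-over-maximum) argument. Choose a node $i^\ast$ minimising the size of its largest branch, $\max_j|B_{i^\ast j}|$, and suppose for contradiction that this largest branch, say $B_{i^\ast j}$, is too big. Crossing the edge $\{i^\ast,j\}$, I would set $Q=B_{i^\ast j}$, let $P=B_{ji^\ast}$ be the branch of $j$ pointing back to $i^\ast$, and put $R=X_{i^\ast}\cap X_j$; the subtree property yields the clean decomposition $V(G)=P\cup Q\cup R$ into disjoint sets with $R\subseteq X_{i^\ast}\cap X_j$. The point is that if $Q$ is heavy then $P$ must be light: a short computation with the three quantities $|P|,|Q|,|R|$ shows $|P|<|Q|$, while every branch of $j$ other than $P$ is a sub-branch of $Q$ and so has size at most $|Q\setminus X_j|<|Q|$, where strictness comes from minimality (which forces $X_j\cap Q\neq\emptyset$). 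Hence the largest branch at $j$ is strictly smaller than the largest branch at $i^\ast$, contradicting the choice of $i^\ast$. Therefore at $i^\ast$ every branch is small enough, and $X_{i^\ast}$ is a $1/2$-separator.

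Finally, since $|X_{i^\ast}|\le tw(G)+1$, the set $X_{i^\ast}$ is a $1/2$-separator with at most $tw(G)+1$ vertices, as claimed. I expect the main obstacle to be the balance bookkeeping in the exchange step: with the normalisation $\tfrac12|V(G)\setminus X_i|$ demanded by the definition (rather than the cruder $\tfrac12|V(G)|$), the overlap term $R=X_{i^\ast}\cap X_j$ enters the inequalities, so one must track the bag sizes carefully and use minimality of the decomposition to guarantee that crossing the heavy edge \emph{strictly} decreases the largest branch. Making that strict-decrease claim airtight — and thereby ruling out the centroid walk oscillating back and forth across a single edge — is the crux of the proof; everything else is the routine verification of the branch decomposition from the axioms of Definition~\ref{def: treewidth}.
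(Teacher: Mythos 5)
There is a genuine gap, and it sits exactly where you placed your bets: the ``short computation'' claiming $|P|<|Q|$ is false, and in fact your whole reduction --- ``it suffices to find a node $i$ with $|B_{ij}|\le \tfrac12|V(G)\setminus X_i|$ for all $j$'' --- can fail, because under the paper's normalization (components measured against $|V(G)-X|$, not $|V(G)|$) a minimal optimal tree decomposition need not contain any such bag. Concretely, let $G=P_3$ with vertices $v_1v_2v_3$. Its unique minimal width-$1$ decomposition has bags $\{v_1,v_2\}$ and $\{v_2,v_3\}$; for either bag $X$, the graph $G-X$ is a single vertex, a component of size $1>\tfrac12|V(G)\setminus X|=\tfrac12$. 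So neither node qualifies, and no centroid choice can succeed; the only $1/2$-separator of size $\le tw(G)+1=2$ is $\{v_2\}$, which is \emph{not} a bag of any minimal optimal decomposition (it appears only in non-minimal ones such as $\{v_1,v_2\}-\{v_2\}-\{v_2,v_3\}$ --- your minimality reduction, imposed to force strictness, actually destroys the bag you need). The algebra confirms this is structural rather than an edge case: with $V(G)=P\sqcup Q\sqcup R$ and $X_{i^\ast}\setminus X_j\subseteq P$, one has $|V(G)\setminus X_{i^\ast}|=|P|+|Q|-|X_{i^\ast}\setminus X_j|$, so the heaviness hypothesis $|Q|>\tfrac12|V(G)\setminus X_{i^\ast}|$ yields only $|P|<|Q|+|X_{i^\ast}\setminus X_j|$. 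Minimality gives $|X_{i^\ast}\setminus X_j|\ge 1$, hence $|P|=|Q|$ is possible (in $P_3$ the pointer oscillates forever across the single tree edge with $|P|=|Q|=1$), and when $|X_{i^\ast}\setminus X_j|\ge 2$ even $|P|>|Q|$ can occur, so crossing the heavy edge may make the largest branch \emph{grow}.

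Your branch bookkeeping (disjointness of the $B_{ij}$, no edges between branches, $X_j\cap Q\neq\emptyset$ from minimality) is all correct, and with it your argument does prove the classical \emph{weak} form: at most one branch of any node can exceed $\tfrac12|V(G)|$, and the mutual-pointing contradiction goes through with that normalization, giving a bag $X_i$ all of whose components in $G-X_i$ have at most $\tfrac12|V(G)|$ vertices. But the paper's Definition of a $p$-separator demands the strictly stronger bound $\tfrac12|V(G)-X|$, which is what Corollary~\ref{coro: tw and separator 2/3} (the $\tfrac13$--$\tfrac23$ partition of $V(G)-X$) actually uses, and that stronger bound is provably not always attained by a bag of a fixed minimal optimal decomposition. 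The paper itself gives no proof --- it quotes Robertson and Seymour --- and in the known proofs of the strong form the separator $X$ is not read off as a single bag of a prescribed minimal decomposition: one must either work with a vertex-set-weighted version and handle the $|W\setminus X|$ normalization by an additional argument, or allow refined (non-minimal) decompositions containing subset bags, exactly the freedom your setup removes. To repair your write-up you would need one of these routes; the centroid-on-a-minimal-decomposition plan is not salvageable by more careful balance bookkeeping.
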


Corollary~\ref{coro: tw and separator 2/3} is directly from Proposition~\ref{prop: tw and separator relation}.

\vspace{.2cm}
\begin{cor}\label{coro: tw and separator 2/3}
	For any graph $G$, there exists a separator $X$ of $G$ with at most $tw(G) +1$ vertices.
	And there exists a partition of $V(G)-X$ into sets $A$ and $B$ such that
	\[
		|V(G)-X|/3 \le |A|,|B| \le 	2|V(G)-X|/3.
	\]
\end{cor}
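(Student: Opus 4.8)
The plan is to derive the whole statement from Proposition~\ref{prop: tw and separator relation}. That proposition already hands us a set $X \subseteq V(G)$ with $|X| \le tw(G)+1$ that is a $1/2$-separator, meaning every connected component of $G-X$ has at most $|V(G)-X|/2$ vertices. This $X$ is exactly the separator the corollary asserts, so the only genuine work is to exhibit the balanced bipartition $(A,B)$ of $V(G)-X$. The key design choice I would make is to let $A$ and $B$ each be a \emph{union of connected components} of $G-X$; this way there are no edges of $G-X$ between $A$ and $B$, so $X$ genuinely separates $A$ from $B$, and it only remains to control the sizes.

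Write $m = |V(G)-X|$ and let $C_1,\ldots,C_r$ be the vertex sets of the components of $G-X$, so $\sum_i |C_i| = m$ and $|C_i| \le m/2$ for every $i$. The goal is to split $\{C_1,\ldots,C_r\}$ into two groups whose total sizes both lie in $[m/3,\,2m/3]$, and I would split into two cases according to the largest component. If some component, say $C_1$, has $|C_1| \ge m/3$, then since $|C_1| \le m/2$ we get $|C_1| \in [m/3,\,m/2]$; taking $A = C_1$ and $B = (V(G)-X)\setminus C_1$ yields $|A| \in [m/3,\,2m/3]$ and $|B| = m - |C_1| \in [m/2,\,2m/3] \subseteq [m/3,\,2m/3]$. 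Otherwise every component satisfies $|C_i| < m/3$, and here a greedy sweep works: add components to $A$ one at a time and stop at the first moment the running total reaches $m/3$. Just before that step the total is below $m/3$ and the component just added has size below $m/3$, so the final total is strictly below $2m/3$; hence $|A| \in [m/3,\,2m/3)$ and $B = (V(G)-X)\setminus A$ has $|B| = m - |A| \in (m/3,\,2m/3]$.

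The main subtlety — and the reason a single greedy accumulation does not suffice by itself — is overshoot: while building $A$ greedily, the running total could sit just under $m/3$ and then a large component (of size up to $m/2$) pushes it as high as roughly $5m/6$, violating the upper bound $2m/3$. The case split on whether the largest component is at least $m/3$ is precisely what neutralizes this: a large component is placed alone, where the bound $|C_1| \le m/2 \le 2m/3$ keeps it admissible, and the greedy sweep is invoked only when all components are small enough that no single addition can overshoot $2m/3$. I expect the verification of the arithmetic inequalities in the two cases to be routine once the case split is set up; the only point requiring care is choosing unions of components so that the resulting $X$ separates $A$ and $B$ while simultaneously meeting both size bounds.
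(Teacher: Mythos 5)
Your proof is correct and is exactly the standard component-grouping argument that the paper implicitly invokes when it says the corollary follows ``directly'' from Proposition~\ref{prop: tw and separator relation}: take the $1/2$-separator, and merge the components of $G-X$ into two unions of sizes in $[\,|V(G)-X|/3,\;2|V(G)-X|/3\,]$, with your case split (one large component of size in $[m/3,m/2]$ placed alone, versus a greedy sweep when all components are below $m/3$) handling the overshoot issue cleanly. Since the paper supplies no further detail, your write-up simply makes explicit the routine verification the authors omitted.
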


Proposition~\ref{prop: tw and separator relation} and Corollary~\ref{coro: tw and separator 2/3} are useful tools to estimate the lower bound of treewidth.

% --------------------------------------------------------------
% --------- bandwidth of H(t,2,n) ------------------
\section{Treewidth of generalized Hamming graph}\label{section: treewidth of generalized Hamming graph}

\subsection{Bandwidth of Hamming graph \texorpdfstring{$H(t,2,n)$}{H(t,2,n)}}

The pathwidth and bandwidth of hypercubes can be exactly calculated as following.

\vspace{.2cm}
\begin{proposition}[\cite{tw_pw_HammingGraph_chandran2006treewidth}]\label{prop: bandwidth of Hypercube}
	We have
\begin{equation}\label{eq: bandwidth of Hypercube}
	pw(H(1,2,n)) = bw(H(1,2,n)) = \sum_{m=0}^{n-1} \binom{m}{ \left\lfloor m/2 \right\rfloor }.
\end{equation}
\end{proposition}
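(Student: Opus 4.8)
The plan is to pin $pw$ and $bw$ to a single combinatorial quantity, the vertex-isoperimetric maximum of the cube, and then evaluate that quantity. Write $Q_n := H(1,2,n)$ and $F := \sum_{m=0}^{n-1}\binom{m}{\lfloor m/2\rfloor}$. For $S \subseteq V(Q_n)$ let $\partial_{\mathrm{in}} S$ be the inner vertex boundary (vertices of $S$ with a neighbour outside $S$) and $\partial_{\mathrm{out}} S$ the outer vertex boundary, and set $B_{\mathrm{in}}(m) := \min_{|S|=m}|\partial_{\mathrm{in}} S|$, $B_{\mathrm{out}}(m) := \min_{|S|=m}|\partial_{\mathrm{out}} S|$. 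Since complementation sends a size-$m$ set to a size-$(2^n-m)$ set and satisfies $\partial_{\mathrm{in}}(S) = \partial_{\mathrm{out}}(\overline S)$, we get $\max_m B_{\mathrm{in}}(m) = \max_m B_{\mathrm{out}}(m) =: \beta$. The proof then reduces to two facts: $\beta \le pw(Q_n)$ and $bw(Q_n) \le \beta$ (which, together with $pw \le bw$ from~\eqref{eq: tw pw bw inequality}, give $pw = bw = \beta$), plus the evaluation $\beta = F$.

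For $\beta \le pw(Q_n)$ I would use the identification of pathwidth with the vertex-separation number, $pw(Q_n) = \min_\phi \max_i |\delta_i^\phi|$, where $\phi$ ranges over orderings of $V(Q_n)$ and $\delta_i^\phi$ is the set of vertices among the first $i$ having a neighbour among the last $2^n - i$. For the initial segment $S_i^\phi = \{v : \phi(v) \le i\}$ this set is exactly $\partial_{\mathrm{in}} S_i^\phi$, so $|\delta_i^\phi| \ge B_{\mathrm{in}}(i)$ for every ordering and every $i$; maximizing over $i$ and minimizing over $\phi$ yields $pw(Q_n) \ge \max_m B_{\mathrm{in}}(m) = \beta$.

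For $bw(Q_n) \le \beta$ I would exhibit Harper's optimal numbering, the Hales numbering attached to the simplicial (Hamming-ball, colex-within-layer) order, and bound edge lengths by a cut argument. The key structural input is Harper's vertex-isoperimetric theorem: the minimizers of $\partial_{\mathrm{out}}$ are the initial segments of this order, and, crucially, they are \emph{nested}, so for this numbering the outer boundary of each initial segment $S_a = \{v : \phi(v)\le a\}$ occupies exactly the next positions $a+1, \ldots, a+|\partial_{\mathrm{out}} S_a|$. Hence for any edge $uv$ with $\phi(u) = a < \phi(v)$ the endpoint $v$ lies in $\partial_{\mathrm{out}} S_a$, whence $\phi(v) - \phi(u) \le |\partial_{\mathrm{out}} S_a| \le \beta$; every edge has length at most $\beta$, so $bw(Q_n) \le \beta$. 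Combined with the previous paragraph and $pw \le bw$, this already gives $pw(Q_n) = bw(Q_n) = \beta$.

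It remains to evaluate $\beta = F$, which I expect to be the main obstacle. By Harper's theorem the extremal configurations are Hamming balls with a partial central layer filled in colex order, and $\beta$ is the maximum over all such initial segments of their boundary size; this maximum sits near $m = 2^{n-1}$ but is \emph{not} attained at a complete ball, which is precisely why a single binomial coefficient is replaced by the sum $F$. The delicate step is to track how the boundary grows as the central layer is filled vertex by vertex, write the extremal boundary as a double sum over layers and colex positions, and collapse it via Pascal-type identities to $\sum_{m=0}^{n-1}\binom{m}{\lfloor m/2\rfloor}$. The soft ingredients — the pathwidth/vertex-separation identity, the generic inequality $|\delta_i^\phi|\ge B_{\mathrm{in}}(i)$, and the sandwiching with $pw \le bw$ — are routine; the genuine work lies in Harper's nestedness statement and this final binomial-sum evaluation, which is the step I would budget the most care for.
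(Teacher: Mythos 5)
Your sandwich $pw(Q_n)\ge\beta$, $bw(Q_n)\le\beta$, $pw\le bw$ is structurally sound and is in fact the same mechanism this paper uses for the generalization: the proposition itself is only quoted from~\cite{tw_pw_HammingGraph_chandran2006treewidth}, and the paper's route to Theorem~\ref{thm: bandwidth of generalized Hypercube} runs through exactly your three ingredients --- Harper's identity $bw(G)=\max_s b_v(s,G)$ for graphs admitting a Hales numbering (Proposition~\ref{prop: hales ordering}), the inequality $pw(G)\ge b_v(s,G)$ (Theorem~\ref{thm: cite pw bv relation}), and $pw\le bw$ (Lemma~\ref{lemma0}). Your complementation identity $\partial_{\mathrm{in}}(S)=\partial_{\mathrm{out}}(V\setminus S)$ and the vertex-separation characterization of pathwidth are fine. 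One caution on the $bw\le\beta$ step: ``initial segments minimize $\partial_{\mathrm{out}}$ at every size'' does \emph{not} by itself give your consecutive-positions claim; you additionally need that $S_a\cup\partial_{\mathrm{out}}(S_a)$ is again an initial segment of the simplicial order (neighborhood-closure). This is true and is precisely Harper's Hales-order condition~\cite{pw_Hypercube_harper1966optimal}, but it is a strictly stronger property than size-by-size extremality and must be invoked as such.

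The genuine gap is the final evaluation $\beta=\sum_{m=0}^{n-1}\binom{m}{\lfloor m/2\rfloor}$, which you explicitly defer to ``Pascal-type identities.'' As written, your argument establishes $pw(Q_n)=bw(Q_n)=\beta$ but not the displayed formula, and that evaluation is the entire quantitative content of the proposition: you would have to compute $|\partial_{\mathrm{out}}(S_a)|$ for every initial segment (a Hamming ball plus a colex-initial piece of the next layer), locate the maximizing size --- which, as you correctly note, is not a complete ball --- and collapse the resulting double sum, and none of that is carried out or reduced to a citable statement. It is worth noting that the paper avoids this direct isoperimetric computation even in the generalized setting: it evaluates $bw(M^{(t,n)})$, the bandwidth of the adjacency matrix under the Hales ordering $\eta^{(n)}$, via the Manhattan-radius recursion of Propositions~\ref{prop: bandwidth recursion formula} and~\ref{prop: rank recursion formula}, and then verifies a guessed closed form by induction (Lemmas~\ref{lemma: guessing r^(t,n)_k1,k2} and~\ref{lemma: bandwidth guessing M^{(t,n)}}, Appendices A--B); specializing that machinery to $t=1$, or citing Harper's original computation, is the cleanest way to close your remaining step.
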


% $H(t,2,n)$ is a generalized form of Hypercube. We intend to generalize the result.
% We first give our main theorem of this subsection.

In this section, we intend to prove Theorem~\ref{thm: bandwidth of generalized Hypercube}.
When $t=1$, Eq~\ref{eq: bandwidth of generalized Hypercube} is exactly the same as Eq~\ref{eq: bandwidth of Hypercube}. Thus, we can view Theorem~\ref{thm: bandwidth of generalized Hypercube} as a generalization of Proposition~\ref{prop: bandwidth of Hypercube}. Using the following techniques, we will derive some recursion formulas~(Proposition~\ref{prop: bandwidth recursion formula} and Proposition~\ref{prop: rank recursion formula}) and use induction to prove Theorem~\ref{thm: bandwidth of generalized Hypercube}. 
However, Eq~\ref{eq: bandwidth of generalized Hypercube} is much more complicated and once we know its formula, it is always ``easy'' to verify it when we have some recursion formulas using induction.
The exact expression of Eq~\ref{eq: bandwidth recursion formula} and~\ref{eq: rank recursion formula} actually come from our elegant observation from some instances with computer assistance.

% Noticing that when $t=1$, the formula in Thm~\ref{thm: bandwidth of generalized Hypercube} is equal to Eq~\ref{eq: bandwidth of Hypercube}.
% It can be easily proved by induction.

To prove Theorem~\ref{thm: bandwidth of generalized Hypercube}, we need some preparation.
In~\cite{pw_Hypercube_harper1966optimal}, Harper showed that if an ordering $\varphi$ of $G(V,E)$ is in Hales order (i.e., a Hales numbering), then $\max \limits_{e \in E} \Delta(e,\varphi)$ takes minimum over all numbering, that is, $bw(G) = \min \limits_{\phi} \max \limits_{e\in E}\Delta(e,\phi) = \max \limits_{e\in E}\Delta(e,\varphi)$.

\begin{definition}[Hales numbering~\cite{pw_Hypercube_harper1966optimal}]
If there exists an ordering $\phi$ all of whose beginning segments obey the following two conditions, then we call such orderings \textit{Hales numberings}. Note that Hales numbering does not always exists and it is not unique.
\begin{enumerate}
	\item For a set of $l$ vertices, let $\Phi_l$ be the number of vertices in the set having neighbors not in the set. $\Phi_l$ must be minimized for all beginning segments $S_l = \{v \in V\mid \phi(v) \le l \}$.
	\item The $\Phi_l' = l - \Phi_l$ ``interior vertices'' of $S_l$ must be numbered $1,2,\ldots, \Phi_l'$, i.e., have the lowest possible numbers on them. 
\end{enumerate}
\end{definition}

Harper~\cite{pw_Hypercube_harper1966optimal} also give a sample of Hales numbering of $H(t,2,n)$\footnote{In~\cite{pw_Hypercube_harper1966optimal}, Harper did not prove this statement. Harper first gave a Hales numbering of hypercube and, then claimed that the numbering is also in Hales order for the distance generalized graph (that is, $H(t,2,n)$) in comments~(III (b)).}.
From~\cite{pw_Hypercube_calculation_wang2009explicit}, we can build such Hales numbering $\varphi$ in the following way.

Define an $\binom{n}{k}\times n$ matrix $A_{k}^{(n)}$ as following.
Let $A_{n}^{(n)}$ be $(\underbrace{1,1,\ldots,1}_{\mbox{$n$ factors}})$ and let $A_{0}^{(n)}$ be $(\underbrace{0,0,\ldots,0}_{\mbox{$n$ factors}})$ for any positive integer $n$.
When $0 < k < n$, $A_{k}^{(n)}$ is defined recursively as Eq~\ref{eq: recursive definition of A_k^n}.

\begin{equation}\label{eq: recursive definition of A_k^n}
	A_{k}^{(n)} \triangleq  \left[ \begin{array}{cc}
		A_{k-1}^{(n-1)} & \mathbf{1} \\
		A_{k}^{(n-1)} & \mathbf{0} \\
	\end{array} \right],
\end{equation}
where $\mathbf{1}$ (resp. $\mathbf{0}$) is an all one (resp.\ all zero) $\binom{n-1}{k-1 }$ (resp. $\binom{n-1}{k}$) column vector.
Clearly, the rows of $A_k^{(n)}$ enumerates all binary vectors of length $n$ whose number of `$1$' is $k$.
Define $S^{(n)}$ as following:

\begin{equation}
S^{(n)} \triangleq \left[ \begin{array}{c}
	A_0^{(n)} \\
	A_1^{(n)} \\
	\vdots \\
	A_n^{(n)} \\
\end{array} \right].
\end{equation}
Then
$S^{(n)}$ is an $2^n \times n$ matrix.
Each row of $S^{(n)}$ excatly corresponds to an $n$-vector (a vertices of $H(t,2,n)$) and vice versa.
Let $\eta^{(n)}$ be the ordering of $H(t,2,n)$ defined by row order of $S^{(n)}$, that is, if $v \in V(H(t,2,n))$ corresponds to the $i$-th row of $S^{(n)}$, then let $\eta^{(n)}(v) = i$.

 $\eta^{(n)}$ is a Hales numbering of $H(t,2,n)$ (see~\cite{pw_Hypercube_calculation_wang2009explicit} and~\cite{pw_Hypercube_harper1966optimal}) and, hence, we have the following Proposition~\ref{prop: hales ordering}.

\begin{proposition}\label{prop: hales ordering}
	$bw(H(t,2,n)) = \max \limits_{e \in E}\Delta(e, \eta^{(n)})$.
% \begin{equation}
% 	bw(H(t,2,n)) = \max \limits_{e \in E}\Delta(e, \eta^{(n)}).
% \end{equation}
\end{proposition}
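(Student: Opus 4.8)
The plan is to reduce the proposition to two ingredients already in hand: Harper's theorem relating bandwidth to Hales numberings, and the fact that the row order $\eta^{(n)}$ of $S^{(n)}$ is a Hales numbering of $H(t,2,n)$. First, recall Harper's result quoted above: for any graph $G(V,E)$, if $\varphi$ is a Hales numbering then $\max_{e\in E}\Delta(e,\varphi)=\min_{\phi}\max_{e\in E}\Delta(e,\phi)=bw(G)$. Thus it suffices to exhibit a single Hales numbering and read off its maximum edge dilation; I would take $\varphi=\eta^{(n)}$.

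Second, I would verify that $\eta^{(n)}$ satisfies the two defining conditions of a Hales numbering for every beginning segment $S_l=\{v:\eta^{(n)}(v)\le l\}$. The construction stacks the blocks $A_0^{(n)},A_1^{(n)},\dots,A_n^{(n)}$, so $\eta^{(n)}$ orders vertices first by Hamming weight and, within a fixed weight, by the recursion of Eq~\ref{eq: recursive definition of A_k^n}; this is exactly the simplicial order for which the vertex boundary is extremal. Concretely, Condition~1 asks that $\Phi_l$, the number of vertices in $S_l$ having a neighbor (at Hamming distance at most $t$) outside $S_l$, be minimum over all $l$-subsets, and Condition~2 asks that the $\Phi_l'=l-\Phi_l$ interior vertices carry the smallest labels. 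Both are meant to follow from the nested block structure together with the vertex-isoperimetric optimality of the order.

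Granting these, the proposition is immediate: since $\eta^{(n)}$ is a Hales numbering, Harper's theorem yields $bw(H(t,2,n))=\max_{e\in E}\Delta(e,\eta^{(n)})$, which is precisely the claim. In this sense the statement is a formal corollary, and the only substantive content lives in the Hales-numbering verification.

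The main obstacle is exactly that isoperimetric optimality for the distance-$t$ graph rather than for the ordinary hypercube: one must show that beginning segments of $\eta^{(n)}$ minimize the vertex boundary when adjacency means Hamming distance at most $t$, not merely distance $1$. Harper asserted this in his comment III(b) without proof, and it was made explicit in~\cite{pw_Hypercube_calculation_wang2009explicit}; the natural argument is a compression (shifting) argument adapted to distance-$t$ neighborhoods, pushing an arbitrary $l$-set toward an initial segment of the order without increasing $\Phi_l$. I would invoke that prior work for the optimality rather than reprove it, after which the remaining steps reduce to checking the interior-labeling condition and applying Harper's theorem, both of which are routine.
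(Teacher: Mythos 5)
Your proposal is correct and takes essentially the same route as the paper: both reduce the proposition to Harper's theorem that a Hales numbering attains the bandwidth, and both treat the substantive fact that $\eta^{(n)}$ is a Hales numbering of $H(t,2,n)$ as inherited from prior work (Harper's comment III(b) in~\cite{pw_Hypercube_harper1966optimal} and the explicit construction in~\cite{pw_Hypercube_calculation_wang2009explicit}) rather than reproving it. Your identification of the distance-$t$ isoperimetric optimality as the only nontrivial ingredient, and your decision to cite it, matches the paper exactly, which likewise offers no independent verification.
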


In order to calculate $bw(H(t,2,n))$, we need some more definitions.

\begin{definition}
Given a graph $G(V,E)$, for two vertex subsets $V_1 \subseteq V$ and $V_2 \subseteq V$ numbered by ordering $\eta_1$ and $\eta_2$ respectively, the adjacency matrix of $V_1$ and $V_2$ is defined to be a $|V_1| \times |V_2|$ matrix $M$ such that for any $u \in V_1$ and $v \in V_2$,
\begin{equation*}
	M(\eta_1(u), \eta_2(v)) = \left\{ \begin{array}{cc}
		1 & \mbox{if $\{u,v\} \in E$}, \\
		0 & \mbox{otherwise}.
	\end{array} \right.
\end{equation*}	
The adjacent matrix of  $G(V,E)$ with ordering $\eta$ is the adjacent matrix of $V$ and $V$ ordered by $\eta$ as defined above.
\end{definition}

\vspace{.2cm}
\begin{definition}
	The bandwidth of an $s \times s$ non-zero square matrix $M$ denoted by $bw(M)$ is the maximum absolute value of the difference between the row and column indices of a nonzero element of that matrix, i.e.,
	\begin{equation}
		bw(M) = \max \limits_{1 \le i \le s, 1 \le j \le s} \{ |i-j| \mid M(i,j) \neq 0 \},
	\end{equation}
	which is exactly the maximum Manhattan distance from a nonzero element to the main diagonal of the matrix (see Figure~\ref{fig: bandwidth square}).
\end{definition}

\vspace{.2cm}
\begin{remark}\label{rmk: bandwidth}
	Given an ordering $\eta$ of a graph $G$, $\max \limits_{e \in E}\Delta(e,\eta)$ is equal to the bandwidth of the adjacency matrix of $G$ ordered by $\eta$.
	% It is directly from the definition.
\end{remark}

\begin{figure}[t]
	\centering         %使图片居中放置
	\begin{minipage}{0.33\linewidth}
		\centering         %使图片居中放置
		\includegraphics[width=\linewidth]{./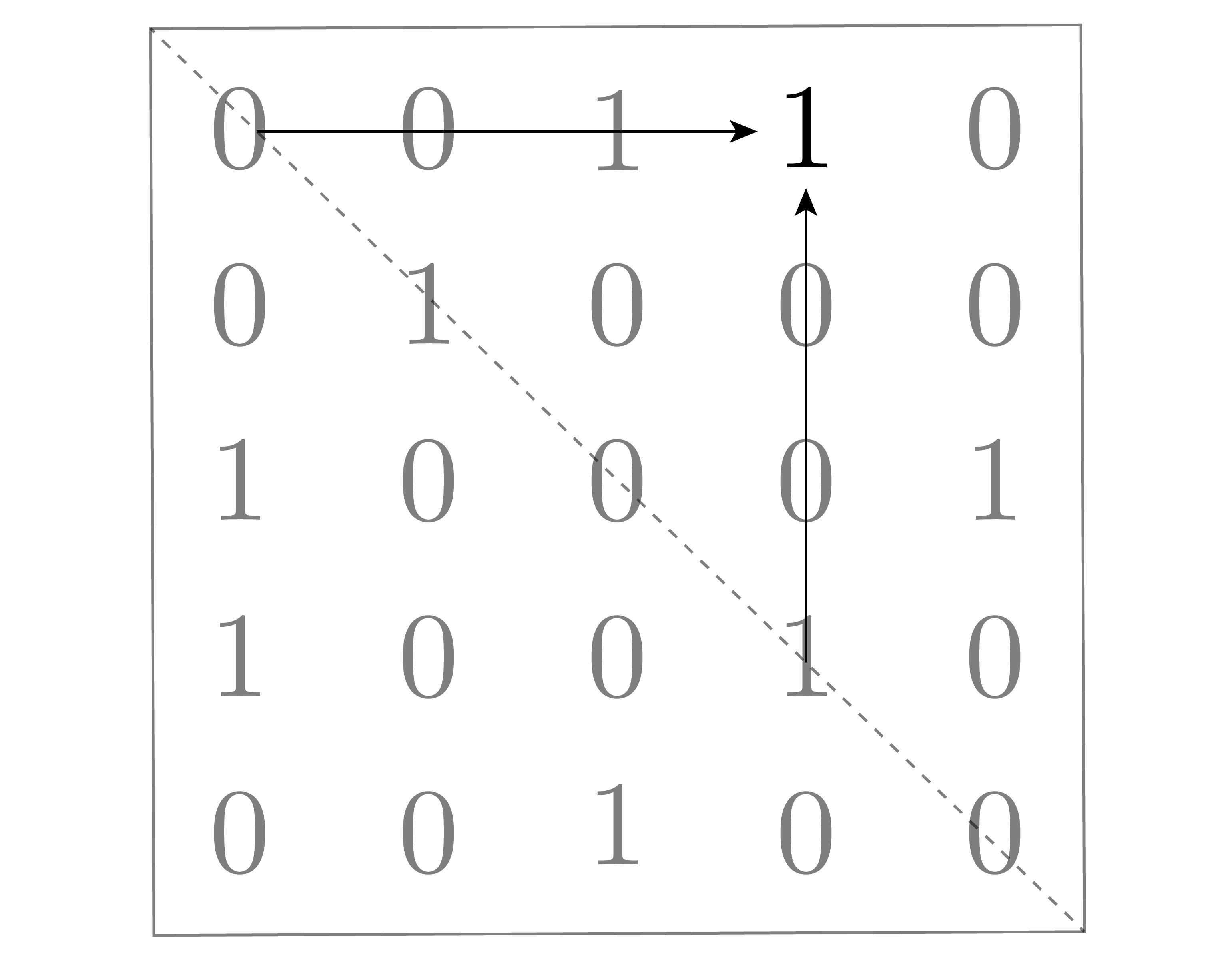}
		\caption{Bandwidth of a square matrix.}\label{fig: bandwidth square}
	\end{minipage}
	\begin{minipage}{0.59\linewidth}
		\vspace{-0.6cm}
		\centering         %使图片居中放置
		\includegraphics[width=\linewidth]{./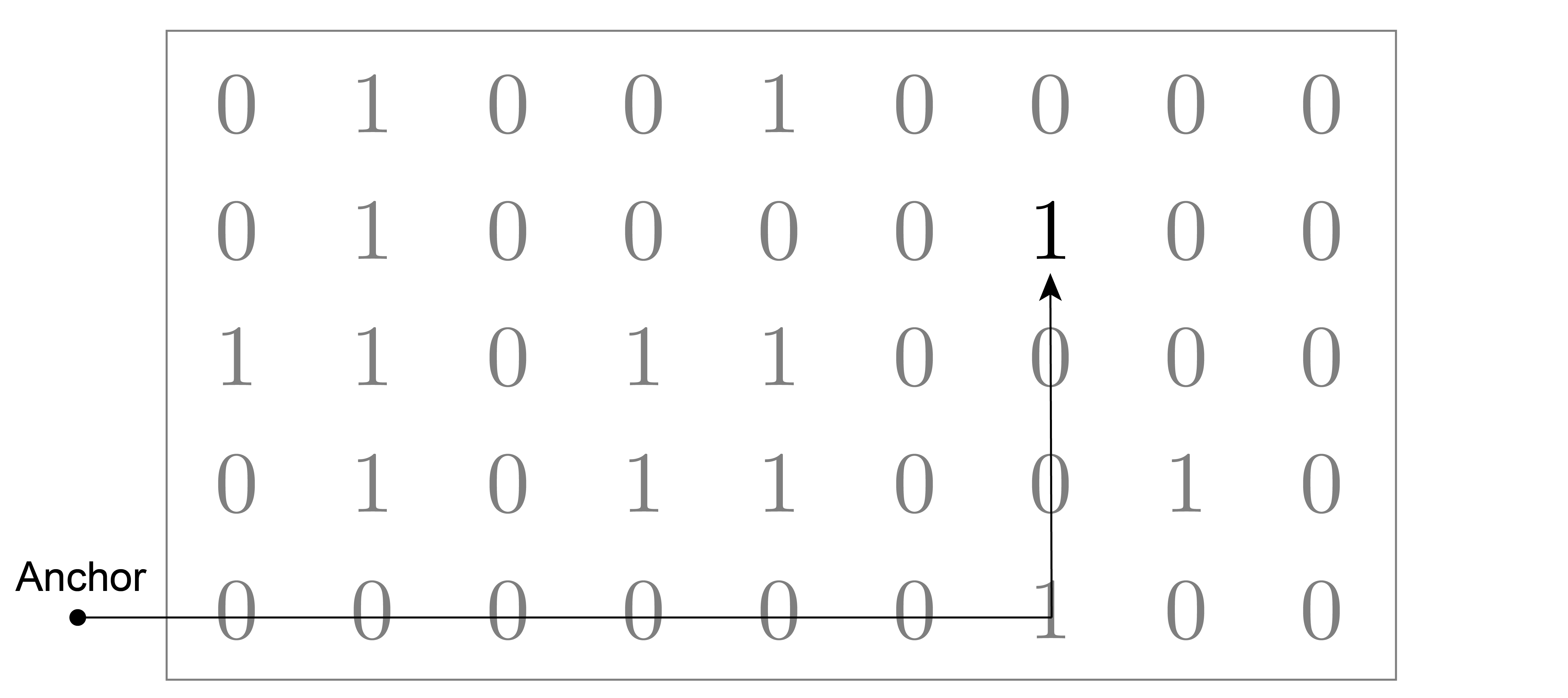}
		\caption{Manhattan radius and the imaginary anchor.}\label{fig: anchor}
	\end{minipage}
\end{figure}

Remark~\ref{rmk: bandwidth} is directly from definitions.
Let $M^{(t,n)}$ be the adjacency matrix of $H(t,2,n)$ ordered by $\eta^{(n)}$. From Remark~\ref{rmk: bandwidth} and Proposition~\ref{prop: hales ordering}, we have Lemma~\ref{lemma: bandwidth graph adjacent matrix}.

\vspace{.2cm}
\begin{lemma}\label{lemma: bandwidth graph adjacent matrix}
	$bw(H(t,2,n)) = bw(M^{(t,n)})$.
	% \begin{equation}
	% 	bw(H(t,2,n)) = bw(M^{(t,n)})
	% \end{equation}
\end{lemma}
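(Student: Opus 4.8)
The plan is to read off the claim directly from the two results that precede it, namely Proposition~\ref{prop: hales ordering} and Remark~\ref{rmk: bandwidth}; no new construction is needed, and the proof is a short chain of two equalities.

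First I would invoke Proposition~\ref{prop: hales ordering}, which asserts that the Hales numbering $\eta^{(n)}$ attains the bandwidth of $H(t,2,n)$, so that
\[
	bw(H(t,2,n)) = \max_{e \in E} \Delta(e, \eta^{(n)}).
\]
Then I would apply Remark~\ref{rmk: bandwidth} with $G = H(t,2,n)$ and the ordering $\eta = \eta^{(n)}$: this identifies the right-hand side above with the matrix-bandwidth of the adjacency matrix of $H(t,2,n)$ ordered by $\eta^{(n)}$. Since that matrix is by definition $M^{(t,n)}$, combining the two relations gives $bw(H(t,2,n)) = bw(M^{(t,n)})$, as desired.

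The one place worth spelling out---and the substance of Remark~\ref{rmk: bandwidth}---is the correspondence between the off-diagonal nonzero entries of $M^{(t,n)}$ and the edges of $H(t,2,n)$. Because the graph is simple and loopless, the diagonal of $M^{(t,n)}$ vanishes, so each nonzero entry $M^{(t,n)}(i,j)$ comes from an edge $\{u,v\}$ with $\eta^{(n)}(u) = i$ and $\eta^{(n)}(v) = j$, whose distance $|i-j|$ from the main diagonal equals $\Delta(\{u,v\}, \eta^{(n)})$. Maximizing over the nonzero entries therefore reproduces $\max_{e \in E}\Delta(e, \eta^{(n)})$. I do not anticipate any real obstacle here, since both ingredients are already established; the only care required is this routine entries-to-edges identification.
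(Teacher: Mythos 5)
Your proposal is correct and matches the paper's own argument exactly: the paper derives Lemma~\ref{lemma: bandwidth graph adjacent matrix} by combining Proposition~\ref{prop: hales ordering} with Remark~\ref{rmk: bandwidth}, and your spelled-out identification of nonzero entries of $M^{(t,n)}$ with edges is precisely the content the paper dismisses as ``directly from definitions.'' Nothing is missing.
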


Now our aim is to give exact value of $bw(M^{(t,n)})$.
To achieve this, we need to define the Manhattan radius of matrix.

\vspace{.2cm}
\begin{definition}
	For an $s \times t$ non-zero matrix $M$, its Manhattan radius $r(M)$ is defined by
	\begin{equation}
		r(M) = \max \limits_{1 \le i \le s, 1 \le j \le t} \{ s - i + j \mid M(i,j) \neq 0 \},
	\end{equation}
which is exactly the maximum Manhattan distance from a nonzero element of $M$ to the position immediately to the left of the bottom-left corner of $M$ (an imaginary element $M(s,0)$).
	The imaginary element is called the anchor of $M$ (see Figure~\ref{fig: anchor}).
	For the convenience of the proof, let $r(M) = -\infty$ if $M$ is a zero matrix or  an empty matrix.

\end{definition}

Note that $r(M) > 0$ if and only if $M$ is non-zero and non-empty. If $M$ is an $s \times s$ symmetric matrix, then by definition,
\begin{equation}\label{eq: radius bandwidth relation}
r(M) = bw(M) + s.
\end{equation}

Let $V_k^{(t,n)} \subseteq V(H(t,2,n))$ be a vertex set containing vertices of $H(t,2,n)$ whose corresponding $n$-vector has exactly $k$ ones, where $0\le k\le n$. Then $\{V_0^{(t,n)}, V_1^{(t,n)}, \ldots, V_n^{(t,n)}\}$ forms a partition of  $V(H(t,2,n))$.
Recall that each row of $A^{(n)}_k$ correspond to a vertex of $V_k^{(t,n)}$ and vice versa.
%Let $\eta^{(n)}_{k}$ be an ordering of $V_k^{(t,n)}$ defined by row order of $A^{(n)}_k$,
%that is, if $v \in V_k^{(t,n)}$ corresponds to the $i$-th row of $A^{(n)}_k$, then let $\eta^{(n)}_k(v) = i$.
Let $\eta^{(n)}_{k}=\eta^{(n)}|_{V_k^{(t,n)}}$.
Let $M_{k,k'}^{(t,n)}$ be the adjacent matrix of $V_k^{(t,n)}$ and $V_{k'}^{(t,n)}$ ordered by $\eta^{(n)}_{k}$ and $\eta^{(n)}_{k'}$ respectively.

For the convenience of proof, let $M^{(t,n)}_{k,k'}$ be empty matrix if either $k$ or $k'$ is larger than $n$ or less than zero.
It is easy to verify that $r(M^{(t,n)}_{k,k'}) > 0$ if and only if $|k-k'|\le t$ and $0 \le k,k' \le n$.
Then from definition, we have

\begin{equation}\label{eq: M^(t,n) expansion}
M^{(t,n)} = \left[ \begin{array}{cccccc}
	M_{0,0}^{(t,n)} & M_{0,1}^{(t,n)} & M_{0,2}^{(t,n)} & \cdots & M_{0,n-1}^{(t,n)} & M_{0,n}^{(t,n)} \\
	M_{1,0}^{(t,n)} & M_{1,1}^{(t,n)} & M_{1,2}^{(t,n)} & \cdots & M_{1,n-1}^{(t,n)} & M_{1,n}^{(t,n)} \\
	M_{2,0}^{(t,n)} & M_{2,1}^{(t,n)} & M_{2,2}^{(t,n)} & \cdots & M_{2,n-1}^{(t,n)} & M_{2,n}^{(t,n)} \\
	\vdots & \vdots & \vdots & \ddots & \vdots & \vdots \\
	M_{n-1,0}^{(t,n)} & M_{n-1,1}^{(t,n)} & M_{n-1,2}^{(t,n)} & \cdots & M_{n-1,n-1}^{(t,n)} & M_{n-1,n}^{(t,n)} \\
	M_{n,0}^{(t,n)} & M_{n,1}^{(t,n)} & M_{n,2}^{(t,n)} & \cdots & M_{n,n-1}^{(t,n)} & M_{n,n}^{(t,n)} \\
	
\end{array} \right].
\end{equation}
Note that $M^{(t,n)}$ is an $2^n\times 2^n$ matrix. If $t \ge n$, then $M^{(t,n)}$ is an all one matrix except for the elements in main diagonal. Hence $bw(M^{(t,n)}) = 2^{n} - 1$.
In the following, we only consider the situation when $t < n $.
Since the bandwidth of $M^{(t,n)}$ is the maximum Manhattan distance from a non-zero element to the main diagonal and $M^{(t,n)}$ is symmetric,
we only need to consider non-zero element from submatrices in Eq~\ref{eq: M^(t,n) expansion} on the diagonal or on the top right of the diagonal, that is,
non-zero elements from $M_{k,k'}^{(t,n)}$ with $k' \ge k$.
Since $M_{k,k'}^{(t,n)}$ is all zero when $|k - k' | > t$,
we only need to consider non-zero element from $M_{k,k'}^{(t,n)}$ with $k \le k' \le k+t$.
By the definition of bandwidth and Manhattan radius, we have Proposition~\ref{prop: bandwidth recursion formula}.

\vspace{.2cm}
\begin{proposition}\label{prop: bandwidth recursion formula} We have
	\begin{equation}
	\begin{aligned}
		bw(M^{(t,n)}) & = \max \limits_{ \begin{array}{c} k = 0, \ldots ,n-1,\\ 1 \le p \le t, \\ p + k \le n \end{array} } \left\{ \sum_{q = 1}^{p-1} \binom{n}{k+q} + r(M^{(t,n)}_{k,k+p}), bw(M^{(t,n)}_{k,k}) \right\}.
	\end{aligned}
	\label{eq: bandwidth recursion formula}
	\end{equation}

\end{proposition}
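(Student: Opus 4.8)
The plan is to unwind the definition of matrix bandwidth through the block decomposition of $M^{(t,n)}$ recorded in Eq~\ref{eq: M^(t,n) expansion}. By definition $bw(M^{(t,n)})$ is the largest value of $|i-j|$ over nonzero entries $M^{(t,n)}(i,j)$, that is, the maximum Manhattan distance from a nonzero entry to the main diagonal. Since $M^{(t,n)}$ is symmetric, every nonzero entry below the diagonal is mirrored above it at the same distance, so it suffices to maximize over nonzero entries lying in a block $M_{k,k'}^{(t,n)}$ with $k'\ge k$. As already observed, such a block is nonzero only when $k'-k=p$ with $1\le p\le t$ (the off-diagonal case) or $p=0$ (the diagonal case), and always $0\le k\le k'\le n$.

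First I would pin down the global coordinates of each block. Writing $R_k=\sum_{j=0}^{k-1}\binom{n}{j}$ for the number of rows preceding block-row $k$, the block $M_{k,k'}^{(t,n)}$ occupies global rows $R_k+1,\ldots,R_k+\binom{n}{k}$ and global columns $R_{k'}+1,\ldots,R_{k'}+\binom{n}{k'}$. Hence an entry at block-local position $(i,j)$ sits at global position $(R_k+i,\,R_{k'}+j)$, and for $k'\ge k$ its distance to the main diagonal equals $(R_{k'}+j)-(R_k+i)$, a nonnegative quantity on the upper blocks.

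For a diagonal block ($k'=k$) this distance reduces to $|i-j|$, whose maximum over nonzero entries is $bw(M_{k,k}^{(t,n)})$, the second term. For an off-diagonal block with $k'=k+p$, $p\ge 1$, I would substitute $R_{k+p}-R_k=\binom{n}{k}+\sum_{q=1}^{p-1}\binom{n}{k+q}$ and rewrite the distance as $\binom{n}{k}+\sum_{q=1}^{p-1}\binom{n}{k+q}+(j-i)$. The key point is that the Manhattan radius was defined precisely so that $r(M_{k,k+p}^{(t,n)})=\max\{\binom{n}{k}-i+j : M_{k,k+p}^{(t,n)}(i,j)\neq 0\}$, equivalently $\max(j-i)=r(M_{k,k+p}^{(t,n)})-\binom{n}{k}$ over nonzero entries. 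Substituting this makes the two $\binom{n}{k}$ contributions cancel, leaving exactly $\sum_{q=1}^{p-1}\binom{n}{k+q}+r(M_{k,k+p}^{(t,n)})$, the first term. Taking the maximum of all block-wise contributions over the admissible ranges $0\le k\le n-1$, $1\le p\le t$, $k+p\le n$ then yields Eq~\ref{eq: bandwidth recursion formula}.

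The whole argument is essentially careful bookkeeping of offsets rather than a conceptual difficulty; the only place that genuinely demands attention is the telescoping of the partial binomial sums when passing from block-local to global coordinates, together with verifying that the $\binom{n}{k}$ terms coming from the block offset and from the Manhattan radius cancel exactly. I would check the boundary cases separately: the degenerate single-vertex blocks $M_{0,0}^{(t,n)}$ and $M_{n,n}^{(t,n)}$ are $1\times 1$ zero matrices (no self-loops) and drop out, so excluding $k=n$ loses nothing; blocks with $p>t$ are zero and contribute nothing; and when $t=1$ every diagonal block is zero, so the $bw(M_{k,k}^{(t,n)})$ terms all vanish, which is consistent with the pure hypercube case.
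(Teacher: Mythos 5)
Your proof is correct and follows essentially the same route as the paper, which likewise uses the symmetry of $M^{(t,n)}$ to restrict to blocks $M^{(t,n)}_{k,k'}$ with $k\le k'\le k+t$ and then appeals to the definitions of bandwidth and Manhattan radius. Your explicit bookkeeping — the offsets $R_{k+p}-R_k$, the cancellation of the two $\binom{n}{k}$ contributions, and the degenerate $1\times 1$ zero blocks at $k=0,n$ — is precisely what the paper compresses into the phrase ``by the definition of bandwidth and Manhattan radius.''
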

Note that $M^{(t,n)}_{k,k}$ is a symmetric matrix. By Eq~\ref{eq: radius bandwidth relation}, we have $bw(M^{(t,n)}_{k,k}) = r(M^{(t,n)}_{k,k}) + \binom{n}{k}$.
If we can calculate all $r(M_{k,k'}^{(t,n)})$ with $k \le k' \le k+t$, then we can calculate $bw(M^{(t,n)})$ via Eq~\ref{eq: bandwidth recursion formula} and, hence, obtain $bw(H(t,2,n))$ by Lemma~\ref{lemma: bandwidth graph adjacent matrix}.
Now, our aim is to calculate all $r(M_{k,k'}^{(t,n)})$ with $k \le k' \le k+t$.

From Eq~\ref{eq: recursive definition of A_k^n}, we have that if $t,n,k,p$ are non-negative intergers satisfying that $k+p \le n $ and $n \ge 1$, then

\begin{equation}\label{eq: M_k,k+p^(t,n) expansion}
    M_{k,k+p}^{(t,n)} = \left( \begin{array}{cc} M_{k-1,k+p-1}^{(t,n-1)} & M_{k-1, k+p}^{(t-1, n-1)} \\ M_{k,k+p-1}^{(t-1,n-1)} & M_{k, k+p}^{(t, n-1)} \end{array} \right).
\end{equation}
Let
$r_{t,n,k,p}^{(1)} = r(M_{k-1, k+p-1}^{(t,n-1)}) + \binom{n-1}{k}$,
$r_{t,n,k,p}^{(2)} = r(M_{k-1, k+p}^{(t-1, n-1)}) + \binom{n-1}{k} + \binom{n-1}{k+p-1}$,
$r_{t,n,k,p}^{(3)} =r(M_{k, k+p-1}^{(t-1,n-1)})$ and
$r_{t,n,k,p}^{(4)} =r(M_{k, k+p}^{(t, n-1)}) + \binom{n-1}{k+p-1}$.
From the definition of $r(M^{(t,n)}_{k,k+p})$ and Eq~\ref{eq: M_k,k+p^(t,n) expansion}, we have Proposition~\ref{prop: rank recursion formula}.

\vspace{.2cm}
\begin{proposition}\label{prop: rank recursion formula}\
	For non-negative integers $t,n,k,p$ satisfying that $k+p \le n$ and $n \ge 1$, we have
	\begin{equation}
		\begin{aligned}
			r(M_{k,k+p}^{(t,n)}) &= \max \left\{ r_{t,n,k,p}^{(1)}, r_{t,n,k,p}^{(2)}, r_{t,n,k,p}^{(3)}, r_{t,n,k,p}^{(4)}\right\}.
		\end{aligned}
		\label{eq: rank recursion formula}
	\end{equation}
\end{proposition}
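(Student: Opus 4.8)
The plan is to reduce the statement to a purely combinatorial fact about the Manhattan radius of a $2\times 2$ block matrix, and then read off the four quantities $r^{(i)}_{t,n,k,p}$ by tracking dimensions. Throughout I take the block decomposition of Eq~\ref{eq: M_k,k+p^(t,n) expansion} as given, together with the convention that $r(M)=-\infty$ for an empty or zero block, so that degenerate blocks automatically drop out of any maximum.

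First I would record the dimensions of the four blocks. Writing $M = M_{k,k+p}^{(t,n)}$, which has $s = \binom{n}{k}$ rows and $\binom{n}{k+p}$ columns, the recursion Eq~\ref{eq: recursive definition of A_k^n} for $A_k^{(n)}$ shows that the two top blocks have $\binom{n-1}{k-1}$ rows and the two bottom blocks have $\binom{n-1}{k}$ rows, while the two left blocks have $\binom{n-1}{k+p-1}$ columns and the two right blocks have $\binom{n-1}{k+p}$ columns.

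The key step is a coordinate translation. Consider a block $B$ sitting inside $M$ with its first global row at index $r_{\mathrm{start}}$ and its first global column at index $c_{\mathrm{start}}$, and a nonzero entry of $B$ at local position $(i',j')$. Then this entry sits at global position $(i,j) = (r_{\mathrm{start}} + i' - 1,\ c_{\mathrm{start}} + j' - 1)$, so its Manhattan distance to the global anchor $(s,0)$ is $s - i + j = s - r_{\mathrm{start}} + c_{\mathrm{start}} - i' + j'$. Comparing with the local distance $\rho - i' + j'$, where $\rho$ is the number of rows of $B$, a short calculation gives
\[
	s - i + j = \bigl[(\text{rows of } M \text{ below } B) + (\text{columns of } M \text{ left of } B)\bigr] + (\rho - i' + j').
\]
Taking the maximum over nonzero entries of $B$ turns the bracketed offset into a constant and the last term into $r(B)$; taking the maximum over the four blocks then yields
\[
	r(M) = \max_{B} \bigl\{ (\text{rows below } B) + (\text{columns left of } B) + r(B) \bigr\}.
\]

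Finally I would compute the four offsets. For the top-left block $M_{k-1,k+p-1}^{(t,n-1)}$ there are $\binom{n-1}{k}$ rows below and no columns to the left, giving $r^{(1)}_{t,n,k,p}$; the top-right block $M_{k-1,k+p}^{(t-1,n-1)}$ has offset $\binom{n-1}{k} + \binom{n-1}{k+p-1}$, giving $r^{(2)}_{t,n,k,p}$; the bottom-left block $M_{k,k+p-1}^{(t-1,n-1)}$ has nothing below or to its left, giving $r^{(3)}_{t,n,k,p}$; and the bottom-right block $M_{k,k+p}^{(t,n-1)}$ has offset $\binom{n-1}{k+p-1}$, giving $r^{(4)}_{t,n,k,p}$. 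Substituting these into the identity above proves Eq~\ref{eq: rank recursion formula}. The only real obstacle is the index bookkeeping in the coordinate translation; once the offset is identified as \emph{rows below plus columns to the left}, the rest is a direct substitution, and the $-\infty$ convention ensures degenerate blocks cause no trouble.
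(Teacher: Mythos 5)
Your proposal is correct and takes essentially the same route as the paper: the paper asserts Proposition~\ref{prop: rank recursion formula} directly from the definition of the Manhattan radius and the block decomposition Eq~\ref{eq: M_k,k+p^(t,n) expansion}, and your coordinate translation (offset equals rows below plus columns to the left of each block, with the $-\infty$ convention absorbing empty or zero blocks) is exactly the bookkeeping the paper leaves implicit. Your four offsets $\binom{n-1}{k}$, $\binom{n-1}{k}+\binom{n-1}{k+p-1}$, $0$ and $\binom{n-1}{k+p-1}$ match the paper's definitions of $r^{(1)}_{t,n,k,p},\ldots,r^{(4)}_{t,n,k,p}$, so the proof is complete.
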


Considering $M^{(t,n)}_{k_1, k_2}$ with $k_1 \le k_2 \le k_1 + t$, it is easy to verify that if the parity of $k_2-k_1$ and $t$ is different, then $M^{(t,n)}_{k_1, k_2} = M^{(t-1,n)}_{k_1, k_2}$.
Hence, we only need to calculate all $M^{(t,n)}_{k_1, k_2}$ with $k_1 \le k_2 \le k_1 + t$ and $k_2-k_1 \equiv t \pmod{2}$.
In this case, assume $t - (k_2 - k_1) = 2s$. Then $M^{(t,n)}_{k,k+t-2s}$ is non-zero.
If $t \ge n-1$, then $M^{(t,n)}_{k,k+t-2s}$ is an all-one matrix~(except the main diagonal when $t=2s$) and, thus, $r(M^{(t,n)}_{k,k+t-2s}) = \binom{n}{k} + \binom{n}{k+t-2s} - 1$. When $1 \le t < n-1$, we let

\begin{footnotesize}
\begin{equation*}
\begin{aligned}
	&A^{(1)}_{t,n,k,s} = \sum \limits_{a=0}^{k-s-1} \left( \binom{t-s+2a}{t-s+a-1} - \binom{t-s+2a}{a-1} \right),\\
	&A^{(2)}_{t,n,k,s} = \sum \limits_{m = t-3s+1+2k}^{n-s} \left( \binom{m-1}{k+t-2s-1} - \binom{m-1}{k-s-1} \right),\\
	&B_{t,n,k,s} = \sum \limits_{a=0}^{n-t-k+s-1} \left( \binom{t-s+2a}{t-s+a-1} - \binom{t-s+2a}{a-1} \right),\\
	&C_{t,n,k,s} = \sum \limits_{m= n-s+1}^{n} \binom{m-1}{k+t-2s-1}= \binom{n}{k+t-2s} - \binom{n-s}{k+t-2s},\\
	&D_{t,n,k,s} = \sum \limits_{m= k+t-2s+1}^{n} \binom{m-1}{k+t-2s-1}.
\end{aligned}
\end{equation*}
\end{footnotesize}

Here we define $\binom{x}{y} = 0$ if $y<0$ or $y>x$.
Note that all $A^{(1)}, A^{(2)}, B,C,D$ terms are non-negative.
It is easy to verify that if $k+t-2s = n $, then $D_{t,n,k,s} = 0 $, otherwise $k+t-2s < n $, then $D_{t,n,k,s} = \binom{n}{k+t-2s} - \binom{k+t-2s}{k+t-2s} = \binom{n}{k+t-2s} - 1$.
Then we have the following results.

\vspace{.2cm}
\begin{lemma}\label{lemma: guessing r^(t,n)_k1,k2}
	% \linespread{1.5}
	Let $t,n, k,s$ be non-negative integers satisfy that $t \ge 2s, n \ge 1, t \ge 1$ and $k + t -2s \le n$. Then
	\begin{equation}\label{eq: lemma: guessing r^(t,n)_k1,k2}
		\begin{aligned}
			r(M^{(t,n)}_{k,k+t-2s})  &= \left\{ \begin{array}{ll}
\binom{n}{k} + \binom{n}{k+t-2s} - 1 & \mbox{if } t\ge n-1, \\
				\binom{n}{k} + A^{(1)}_{t,n,k,s} + A^{(2)}_{t,n,k,s} + C_{t,n,k,s} & \mbox{if } t<n-1,0 \le k - s \le \left\lfloor (n-t)/2 \right\rfloor, \\
				\binom{n}{k} + B_{t,n,k,s} + C_{t,n,k,s}& \mbox{if } t<n-1, \left\lfloor (n-t)/2 \right\rfloor \leq k -s \leq n-t  , \\
				\binom{n}{k} + D_{t,n,k,s} & \mbox{if $t<n-1$, $k-s \le 0$ or $k-s \ge n-t$.}
			\end{array} \right.
		\end{aligned}
	\end{equation}
	% where $1_{k-s < 0} = 1$ when $k-s<0$ and $1_{k-s < 0} = 0$ otherwise.
\end{lemma}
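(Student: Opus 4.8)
The plan is to verify the guessed formula in~\eqref{eq: lemma: guessing r^(t,n)_k1,k2} by induction on $n$, driven by the block recursion of Proposition~\ref{prop: rank recursion formula}, with Pascal's identity doing the bookkeeping that merges the binomial sums available at level $n-1$ into the target expression at level $n$.

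First I would settle the regime $t\ge n-1$ separately. As already observed in the text preceding the lemma, here $M^{(t,n)}_{k,k+t-2s}$ is an all-one matrix apart from its main diagonal (the diagonal is deleted exactly when the two weight classes coincide, i.e.\ when $t=2s$), so the nonzero entry farthest from the anchor is the top-right one and $r(M^{(t,n)}_{k,k+t-2s})=\binom{n}{k}+\binom{n}{k+t-2s}-1$. This disposes of the first line of~\eqref{eq: lemma: guessing r^(t,n)_k1,k2} unconditionally and anchors the induction, whose only remaining task is the regime $t<n-1$.

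For the inductive step I would substitute $p=t-2s$ into~\eqref{eq: rank recursion formula} and read off how the four terms reduce the parameters. A short computation gives that (up to the additive binomial shifts recorded in the definitions of the $r^{(i)}$) $r^{(1)}$ invokes the instance $(t,n-1,k-1,s)$, $r^{(4)}$ the instance $(t,n-1,k,s)$, $r^{(2)}$ the instance $(t-1,n-1,k-1,s-1)$, and $r^{(3)}$ the instance $(t-1,n-1,k,s)$, each at level $n-1$ and hence supplied either by the induction hypothesis or, when its distance parameter meets the boundary, by the all-one closed form above. Writing $x=k-s$, the comparison that selects the case in~\eqref{eq: lemma: guessing r^(t,n)_k1,k2} is $x$ against $\lfloor(n-t)/2\rfloor$; under the four reductions this becomes $x-1$ against $\lfloor((n-1)-t)/2\rfloor$ for $r^{(1)}$, $x$ against $\lfloor((n-1)-t)/2\rfloor$ for $r^{(4)}$, and $x$ against $\lfloor(n-t)/2\rfloor$ for $r^{(2)}$ and $r^{(3)}$, which pins down which line of the hypothesis governs each term. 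The degenerate subcases $t=1$ (so that $r^{(2)},r^{(3)}$ sit on the edgeless graph $H(0,2,n-1)$) and $s=0$ (so that $r^{(2)}$ carries an out-of-range index $s-1$) are absorbed by the convention $r(\cdot)=-\infty$ for empty or zero matrices, whereupon the offending terms simply leave the maximum.

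The heart of the argument, and the step I expect to be the main obstacle, is the case analysis on the location of $x=k-s$ relative to $0$, $\lfloor(n-t)/2\rfloor$ and $n-t$. In each region I must both determine which of $r^{(1)},\dots,r^{(4)}$ realizes the maximum in~\eqref{eq: rank recursion formula}, and then collapse the winning expression---an offset binomial coefficient added to one of the sums $A^{(1)},A^{(2)},B,C,D$ evaluated at level $n-1$---back to the claimed closed form at level $n$, using Pascal's rule $\binom{m-1}{j}+\binom{m-1}{j-1}=\binom{m}{j}$ together with the one-step recursions that these sums inherit as $(n,k,t,s)$ decrease. The delicate points I anticipate are the parity jump of $\lfloor(n-t)/2\rfloor$ when $t$ drops to $t-1$, the forced agreement of the two formulas at the shared boundary $x=\lfloor(n-t)/2\rfloor$ (where the second and third lines of~\eqref{eq: lemma: guessing r^(t,n)_k1,k2} must coincide), and the verification that the nominated dominant term genuinely exceeds the other three rather than merely matching the target---which is where the inequalities, as opposed to the binomial identities, carry the real weight.
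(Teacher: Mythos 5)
Your proposal matches the paper's own proof (Appendix~A) in all essentials: the same induction driven by Eq~\ref{eq: rank recursion formula} with exactly the four parameter reductions you identify (the paper's Table~\ref{tab: parameter tuple}), the all-one regime $t\ge n-1$ settled first, degenerate terms removed via the $r(\cdot)=-\infty$ convention for zero or empty blocks, and the regional case analysis on $k-s$ versus $0$, $\left\lfloor (n-t)/2 \right\rfloor$ and $n-t$, in which one term (in the paper, $r^{(2)}_{t,n,k,t-2s}$ whenever $s>0$, and $r^{(4)}$ or $r^{(1)}$ when $s=0$) is shown by Pascal-type collapsing to equal the closed form while the remaining terms are dominated---precisely the inequalities you flag as carrying the real weight. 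The only cosmetic difference is that the paper inducts on $t+n+k+s$ rather than on $n$, which is equivalent here since every reduction lowers $n$ by one.
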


\vspace{.2cm}
\begin{lemma}\label{lemma: bandwidth guessing M^{(t,n)}}
	We have	\begin{equation}\label{eq: bandwidth guessing M^{(t,n)}}
	\begin{aligned}
		bw(M^{(t,n)}) = \sum_{k = \left\lfloor (n-t)/2 \right\rfloor }^{\left\lfloor (n-t)/2 \right\rfloor +t -1} \binom{n}{k} + \sum_{a=0}^{\left\lfloor (n-t-1)/2 \right\rfloor } \left( \binom{t+2a}{t+a-1} - \binom{t+2a}{a-1} \right).
	\end{aligned}
	\end{equation}
\end{lemma}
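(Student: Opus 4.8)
The plan is to feed the explicit radius values of Lemma~\ref{lemma: guessing r^(t,n)_k1,k2} into the bandwidth recursion of Proposition~\ref{prop: bandwidth recursion formula} and then evaluate the maximum in Eq~\ref{eq: bandwidth recursion formula} by hand. I would assume throughout that $1 \le t < n-1$; the remaining cases $t \ge n-1$ are degenerate (there $M^{(t,n)}$ is all-ones off the diagonal) and can be checked directly against Eq~\ref{eq: bandwidth guessing M^{(t,n)}}. Writing $f(k,p) = \sum_{q=1}^{p-1}\binom{n}{k+q} + r(M^{(t,n)}_{k,k+p})$ for the off-diagonal candidates, the task is to show that the maximum of the $f(k,p)$ and of the diagonal candidates $bw(M^{(t,n)}_{k,k}) = r(M^{(t,n)}_{k,k}) + \binom{n}{k}$, taken over the admissible ranges, equals the right-hand side of Eq~\ref{eq: bandwidth guessing M^{(t,n)}}, and to exhibit the maximizer.

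First I would argue that the maximum occurs at the largest distance $p = t$, i.e. $s = 0$. For a fixed $k$ the prefix sum $\sum_{q=1}^{p-1}\binom{n}{k+q}$ grows with $p$, and comparing $f(k,p)$ to $f(k,t)$ by means of the parity reduction recorded before Lemma~\ref{lemma: guessing r^(t,n)_k1,k2} (so that $M^{(t,n)}_{k,k+p} = M^{(t-1,n)}_{k,k+p}$ when $p \not\equiv t \pmod 2$) lets me treat both parities of $p$ with the explicit formulas. The diagonal candidates correspond to $p = 0$ and should be dominated by the $p = t$ candidates through the same comparison, so I would dispose of them here as well.

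With $p = t$ fixed I would then maximize $f(k,t)$ over $k$. Setting $s = 0$ in Lemma~\ref{lemma: guessing r^(t,n)_k1,k2} splits $f(k,t)$ into the regimes $0 \le k \le \lfloor(n-t)/2\rfloor$ and $\lfloor(n-t)/2\rfloor \le k \le n-t$ (plus the boundary regime governed by $D_{t,n,k,0}$), and these two regimes are interchanged by the complementation symmetry $k \leftrightarrow n-t-k$ of the adjacency matrix. I would examine the sign of the consecutive difference $f(k,t) - f(k-1,t)$, rewriting it as a single difference of binomial coefficients, to prove that $f(\cdot,t)$ is unimodal with peak at the central index $k^\ast = \lfloor(n-t)/2\rfloor$. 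At $k = k^\ast$ the leading part $\binom{n}{k^\ast} + \sum_{q=1}^{t-1}\binom{n}{k^\ast+q}$ combines into $\sum_{k=\lfloor(n-t)/2\rfloor}^{\lfloor(n-t)/2\rfloor+t-1}\binom{n}{k}$, which is exactly the first sum of Eq~\ref{eq: bandwidth guessing M^{(t,n)}}.

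The step I expect to be the main obstacle is establishing this unimodality together with the dominance of $p = t$ uniformly over the two-dimensional range of $(k,p)$, since it requires comparing sums of binomial coefficients rather than individual terms; the complementation symmetry should cut the work roughly in half. It then remains to check that the residual terms $A^{(1)}_{t,n,k^\ast,0} + A^{(2)}_{t,n,k^\ast,0} + C_{t,n,k^\ast,0}$ collapse to the second sum $\sum_{a=0}^{\lfloor(n-t-1)/2\rfloor}\bigl(\binom{t+2a}{t+a-1} - \binom{t+2a}{a-1}\bigr)$. This is a deterministic identity check, but it is delicate because the upper index $k^\ast - 1$ in $A^{(1)}$ matches $\lfloor(n-t-1)/2\rfloor$ only when $n-t$ is even, so for odd $n-t$ the terms $A^{(2)}$ and $C$ must account for exactly the one missing summand; I would verify it by splitting on the parity of $n-t$ and collapsing the running sums with the hockey-stick identity.
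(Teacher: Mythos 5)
Your overall plan --- substitute the explicit radii of Lemma~\ref{lemma: guessing r^(t,n)_k1,k2} into Proposition~\ref{prop: bandwidth recursion formula}, reduce the maximum to $p=t$, prove unimodality in $k$ with peak $k^\ast=\lfloor(n-t)/2\rfloor$, and match the peak value with the right-hand side --- is the same skeleton as the paper's Appendix~B, and your unimodality step (signs of consecutive differences of $\widetilde r(M^{(t,n)}_{k,k+t})$) is exactly the paper's Case~2. But your reduction to $p=t$ has a concrete hole: you propose the vertical dominance $f(k,p)\le f(k,t)$ at fixed $k$, and this comparison is not even defined on the corner $k>n-t$, where the only admissible blocks have $p\le n-k<t$ (including the diagonal blocks $M^{(t,n)}_{k,k}$ with $k$ near $n$); your proposal never says how those candidates are dominated. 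The paper avoids this with a diagonal push: for $k>0$, $k+p<n$ and $p+2\le t$ it shows $\widetilde r(M^{(t,n)}_{k,k+p})\le\widetilde r(M^{(t,n)}_{k-1,k+p+1})$, widening the band by two while preserving the parity of $p$, together with separate observations on the edges $k=0$ and $k+p=n$; this routes every block, corner blocks included, to $p\in\{t-1,t\}$. Note also that justifying vertical dominance by ``the prefix sum grows with $p$'' is insufficient on its own: the radius term genuinely decreases as $p$ grows (e.g.\ for $n=8$, $t=3$, $k=2$ one has $r(M_{2,3})=79$, $r(M_{2,4})=77$, $r(M_{2,5})=68$), so you would be proving binomial-sum inequalities at least as heavy as the paper's.

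Second, you drop the paper's induction on $t$, and this costs you where you least expect it. The paper disposes of the opposite-parity blocks $p=t-1$ in one line: $\widetilde r(M^{(t,n)}_{k,k+t-1})=\widetilde r(M^{(t-1,n)}_{k,k+t-1})\le bw(M^{(t-1,n)})$, which by the induction hypothesis equals the right-hand side of Eq~\ref{eq: bandwidth guessing M^{(t,n)}} for $t-1$ (base case $t=1$ being Harper's hypercube formula, checked against the stated expression by parity of $n$), and the right-hand side is increasing in $t$. Without that device you must compare $f(k,t-1)$ with the $p=t$ candidates by hand, which your ``parity reduction'' remark does not yet do. Two further points: the complementation symmetry $k\leftrightarrow n-t-k$ you invoke appears to be true but is unproven --- it requires compatibility of the Hales ordering with complementation --- and the paper simply performs both one-sided difference estimates instead. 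Finally, your worry about the closing identity is unfounded: at $k^\ast$ with $s=0$ one sits on the boundary where the second and third regimes of Lemma~\ref{lemma: guessing r^(t,n)_k1,k2} agree, $C_{t,n,k^\ast,0}=0$, and $B_{t,n,k^\ast,0}$ has upper summation limit $n-t-k^\ast-1=\lfloor(n-t-1)/2\rfloor$ for both parities of $n-t$, so the peak value is immediately the claimed expression with no parity split or hockey-stick argument.
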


From Eq~\ref{eq: bandwidth recursion formula} and~\ref{eq: rank recursion formula}, we can prove Lemmas~\ref{lemma: guessing r^(t,n)_k1,k2} and~\ref{lemma: bandwidth guessing M^{(t,n)}} by induction.
The complete proofs of Lemmas~\ref{lemma: guessing r^(t,n)_k1,k2} and~\ref{lemma: bandwidth guessing M^{(t,n)}} can be find in {\bf Appendix A} and {\bf Appendix B}, respectively.
Lemmas~\ref{lemma: bandwidth guessing M^{(t,n)}} and~\ref{lemma: bandwidth graph adjacent matrix} show the exact bandwidth of $H(t,2,n)$.

For a set $S \subseteq V$, let $N(S)= \{ v \in V-S \mid \exists u \in S, uv \in E\}$, $ \varPhi(S) = |N(S)|$ and
 $b_v(l, G) = \min \limits_{S \subseteq V, |S| = l}\varPhi(S)$.
Harper~\cite{pw_Hypercube_harper1966optimal} showed that $bw(G) = \max \limits_{1 \le s \le |V|} b_v(s,G)$ if $G$ admits a Hales numbering.
Therefore, $bw(H(t,2,n)) = \max \limits_{1 \le s \le 2^n} b_v(s,H(t,2,n))$.

\vspace{.2cm}
\begin{theorem}[Theorem 1 of~\cite{tw_pw_HammingGraph_chandran2006treewidth}]\label{thm: cite pw bv relation}
	Let $G(V,E)$ be any graph on $n$ vertices, and let $1 \le s \le n$. Then $pw(G) \ge b_v(s,G)$.
\end{theorem}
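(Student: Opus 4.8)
The plan is to establish the inequality directly by exhibiting, for the prescribed size $s$, a single vertex subset whose outer boundary is small. Since $b_v(s,G)=\min_{|S|=s}\varPhi(S)$ is a \emph{minimum} over all $s$-subsets, it suffices to construct one set $S\subseteq V$ with $|S|=s$ and $\varPhi(S)=|N(S)|\le pw(G)$; this immediately gives $b_v(s,G)\le pw(G)$, which is the claim.

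First I would fix an optimal path decomposition of $G$ and convert it, without increasing its width, into a \emph{nice} path decomposition $(X_1,\dots,X_m)$, in which consecutive bags differ by the insertion (``introduce'') or deletion (``forget'') of a single vertex and $X_m=\emptyset$. Write $w=pw(G)$, so $|X_i|\le w+1$ for every $i$. For each index $i$ let $F_i$ denote the set of vertices already forgotten by bag $X_i$, i.e.\ those occurring in some bag to the left of $X_i$ but in no bag from $X_i$ onward. Using the interval property of a path decomposition (the bags containing a fixed vertex form a contiguous block, a direct consequence of the third axiom of Definition~\ref{def: treewidth}), I would verify the standard separator fact that $N(F_i)\subseteq X_i$: an edge from a forgotten vertex to a non-forgotten one forces a common bag strictly to the left of $X_i$, and the non-forgotten endpoint, being alive at or after $X_i$, must by the interval property then also occupy $X_i$.

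The key quantitative step is to read off the boundary at the right moment. Right after a bag $X_i$ produced by a forget node, the bag has shrunk: $X_i$ arises from a bag of size at most $w+1$ by deleting one vertex, so $|X_i|\le w$, and hence $\varPhi(F_i)\le|N(F_i)|\le|X_i|\le w$. Because forgets occur one vertex at a time and $X_m=\emptyset$ forces all $n$ vertices to be forgotten eventually, the quantity $|F_i|$ increases by exactly $1$ at each forget node and therefore attains every value in $\{1,2,\dots,n\}$. Choosing the forget node after which exactly $s$ vertices have been forgotten produces a set $F$ with $|F|=s$ and $\varPhi(F)\le w=pw(G)$, completing the argument.

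The only genuine obstacle is shaving the spurious ``$+1$'': the separator property by itself yields only $\varPhi(F_i)\le|X_i|\le w+1$, which is too weak. The decisive observation is that one should evaluate the boundary immediately after a forget operation, where the current bag has size at most $w$; passing to a nice path decomposition is exactly what guarantees both this size drop and that the sizes $|F_i|$ of the forgotten sets hit $s$ precisely.
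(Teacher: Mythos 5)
Your proof is correct, and a comparison with ``the paper's proof'' requires a caveat: the paper contains no proof of this statement at all --- it is imported verbatim as Theorem~1 of the cited Chandran--Kavitha paper. Your argument is therefore a genuinely self-contained re-derivation, and it is sound. The two places where such attempts typically fail are handled properly: the separator containment $N(F_i)\subseteq X_i$ is argued correctly from the interval property (and $F_i\cap X_i=\emptyset$, so $\varPhi(F_i)\le |X_i|$ really does follow), and the spurious $+1$ is removed by the right device, namely reading the bag immediately after a forget node, where $|X_i|\le |X_{i-1}|-1\le pw(G)$; your claim that $|F_i|$ attains every value in $\{1,\dots,n\}$ is also justified, since each vertex's bags form a contiguous block and the final bag is empty, so there are exactly $n$ forget events, each incrementing $|F_i|$ by one. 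For comparison, the standard derivations in the literature go either through the identity $pw(G)=vs(G)$ with the vertex separation number (in an optimal layout $v_1,\dots,v_n$, the suffix $S=\{v_{n-s+1},\dots,v_n\}$ satisfies $\varPhi(S)\le vs(G)$, since $N(S)$ is exactly the set counted at cut position $n-s$), or through an interval completion $H$ with $\omega(H)=pw(G)+1$ (take $S$ to be the $s$ intervals of smallest right endpoint; every vertex of $N(S)$ and the $s$-th interval share a common point, so they form a clique, giving $|N(S)|\le \omega(H)-1$). Your nice-decomposition argument is essentially the layout proof in disguise --- the forget order of a nice path decomposition is precisely a linear layout witnessing $vs(G)\le pw(G)$ --- but it has the advantage of not invoking Kinnersley's theorem or interval completions as black boxes, at the mild cost of proving (or at least asserting) the standard normalization that any path decomposition can be made nice without increasing its width.
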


\begin{lemma}\label{lemma0}
	We have	$pw(H(t,2,n))$ $ = bw(H(t,2,n))$.
\end{lemma}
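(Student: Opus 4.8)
The plan is to sandwich $pw(H(t,2,n))$ between two quantities that are already known to coincide. The upper bound is immediate: the chain of inequalities in Eq~\ref{eq: tw pw bw inequality} gives $pw(H(t,2,n)) \le bw(H(t,2,n))$ for free, so the entire task reduces to establishing the reverse inequality $pw(H(t,2,n)) \ge bw(H(t,2,n))$.

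For the lower bound I would route through the vertex-boundary characterization of bandwidth. Because $\eta^{(n)}$ is a Hales numbering of $H(t,2,n)$ (recorded in Proposition~\ref{prop: hales ordering}), Harper's boundary formula applies and yields $bw(H(t,2,n)) = \max_{1 \le s \le 2^n} b_v(s, H(t,2,n))$, as already noted in the text preceding the statement. The key step is then a single application of Theorem~\ref{thm: cite pw bv relation}: that theorem asserts $pw(G) \ge b_v(s,G)$ for every graph $G$ and every $s$ with $1 \le s \le |V(G)|$. Applying it to $G = H(t,2,n)$, whose vertex count is $2^n$, for each $s$ in the range $1 \le s \le 2^n$ and taking the maximum over all such $s$ gives
\[
	pw(H(t,2,n)) \;\ge\; \max_{1 \le s \le 2^n} b_v(s, H(t,2,n)) \;=\; bw(H(t,2,n)).
\]

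Combining the two bounds yields $pw(H(t,2,n)) = bw(H(t,2,n))$, as desired. The proof is thus a clean two-sided squeeze, and there is no genuine obstacle once the ingredients are in place: the only substantive input is the existence of a Hales numbering for $H(t,2,n)$, without which Harper's boundary formula need not hold, but this has already been secured by the explicit construction of $\eta^{(n)}$. All the nontrivial content is packaged inside the cited theorem and Harper's theory, so the argument amounts to assembling these facts in the correct order.
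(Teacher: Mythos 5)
Your proposal is correct and follows essentially the same route as the paper: the paper likewise obtains $pw(H(t,2,n)) \ge b_v(s,H(t,2,n))$ for all $1 \le s \le 2^n$ from Theorem~\ref{thm: cite pw bv relation}, takes the maximum over $s$ to match $bw(H(t,2,n))$ via Harper's Hales-numbering identity, and closes with Eq~\ref{eq: tw pw bw inequality} for the reverse inequality. No gaps; your explicit remark that the Hales numbering $\eta^{(n)}$ is the substantive input is a fair gloss on what the paper leaves implicit.
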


\begin{proof}
By Theorem~\ref{thm: cite pw bv relation}, we have $pw(H(t,2,n)) \ge b_v(s,H(t,2,n))$ for all $1 \le s \le 2^n$.
Then $pw(H(t,2,n)) \ge \max \limits_{1 \le s \le 2^n} b_v(s,H(t,2,n)) = bw(H(t,2,n))$.
Combining Eq~\ref{eq: tw pw bw inequality}, we have $pw(H(t,2,n))$ $ = bw(H(t,2,n))$.
\end{proof}

% \footnote{This statement is not explicitly stated in~\cite{tw_pw_HammingGraph_chandran2006treewidth}, but can be obtained by mimicking the proof of Theorem~2 on pp.~361.}.
Theorem~\ref{thm: bandwidth of generalized Hypercube} can be derived from Lemmas~\ref{lemma: bandwidth graph adjacent matrix},~\ref{lemma: bandwidth guessing M^{(t,n)}} and~\ref{lemma0}.

% ------------------------------------------------
\subsection{Treewidth of \texorpdfstring{$H(t,q,n)$}{H(t,q,n)}}

In this subsection we analyze the asymptotic behavior of $tw(H(t,q,n))$ when $n$ goes to infinity and give the proof of Theorem~\ref{thm: asymptotic tw(H(t,q,n))}.

% We first give the main theorem of this subsection.

% The proof of the upper bound and the lower bound are shown in the next two subsections.
% \subsection{Lower bound of $tw(H(t,q,n))$}

We first prove the lower bound of $tw(H(t,q,n))$ by Proposition~\ref{prop: lowerbound treewidth counting neighbors}.

\vspace{.2cm}
\begin{proposition}[Lemma 7 of~\cite{tw_pw_HammingGraph_chandran2006treewidth}]\label{prop: lowerbound treewidth counting neighbors}
	Let $G(V,E)$ be a graph with $n$ vertices. If for each subset $X$ of $V$ with $n/4 \le |X| \le n/2$, $\varPhi(X) \ge k$, then $tw(G) \ge k-1$.
\end{proposition}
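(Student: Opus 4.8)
The plan is to prove the statement directly by manufacturing, from a near-optimal separator, a vertex set $Y$ lying in the critical size window $[n/4, n/2]$ whose neighborhood is confined to the separator. Since the hypothesis forces $\varPhi(Y) \ge k$, this will pin down $tw(G)+1 \ge k$. First I would invoke Proposition~\ref{prop: tw and separator relation} to obtain a $1/2$-separator $X$ with $|X| \le tw(G)+1$. By definition of a $1/2$-separator, every connected component of $G-X$ has at most $|V-X|/2 \le n/2$ vertices, and any union $Y$ of such components satisfies $N(Y) \subseteq X$, so that $\varPhi(Y) = |N(Y)| \le |X| \le tw(G)+1$.

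The heart of the argument is a grouping step: I want a union $Y$ of components with $n/4 \le |Y| \le n/2$. Write $m = |V-X| = n-|X|$, and let $C_1,\dots,C_r$ be the components of $G-X$, each of size at most $m/2 \le n/2$. If some $C_i$ already has $|C_i| \ge n/4$, then $n/4 \le |C_i| \le n/2$ and I take $Y = C_i$. Otherwise every component has fewer than $n/4$ vertices, and I add components one at a time until the running total first reaches $n/4$; just before the final addition the total is below $n/4$ and the component added has size below $n/4$, so the resulting $Y$ obeys $n/4 \le |Y| < n/2$. This greedy accumulation succeeds provided the components sum to at least $n/4$, i.e.\ $m \ge n/4$, equivalently $|X| \le 3n/4$.

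It remains to handle two regimes. If $tw(G)+1 \le 3n/4$, then $|X| \le 3n/4$, the grouping succeeds, and applying the hypothesis to $Y$ gives $k \le \varPhi(Y) \le tw(G)+1$, hence $tw(G) \ge k-1$. If instead $tw(G)+1 > 3n/4$, I argue directly: evaluating the hypothesis at any fixed set $X_0$ with $|X_0| = \lfloor n/2\rfloor$ yields $k \le \varPhi(X_0) \le n - |X_0| \le \lceil n/2\rceil$, so that $tw(G) > 3n/4 - 1 \ge k-1$. In both cases $tw(G) \ge k-1$, as required.

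The main obstacle is the grouping step together with its boundary behaviour. One must guarantee that component sizes never exceed $n/2$ — this is precisely where the $1/2$-separator property, and not merely the existence of a small separator, is needed — and that the greedy process cannot overshoot past $n/2$, which forces the split into the ``one large component'' and ``all components small'' subcases. The only remaining subtlety is the very-large-treewidth regime $|X| > 3n/4$, where the grouping may fail for lack of enough vertices outside $X$; there the conclusion must instead be extracted from the crude upper bound $k \le \lceil n/2\rceil$ on the hypothesis parameter.
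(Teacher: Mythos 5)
Your proof is correct: the paper itself states this proposition without proof (it is quoted as Lemma~7 of Chandran--Kavitha), and your argument --- extract a $1/2$-separator $X$ with $|X| \le tw(G)+1$ via Proposition~\ref{prop: tw and separator relation}, greedily group components of $G-X$ into a set $Y$ with $n/4 \le |Y| \le n/2$ and $N(Y) \subseteq X$ so that $k \le \varPhi(Y) \le |X|$, and dispose of the regime $|X| > 3n/4$ by the crude bound $k \le \varPhi(X_0) \le \lceil n/2 \rceil \le 3n/4$ --- is exactly the standard balanced-separator argument used in that source. The only caveat is the degenerate case of very small $n$ (e.g.\ $n=1$, where no integer size lies in the window $[n/4,n/2]$ and the hypothesis is vacuous), which is an artifact of the proposition as stated rather than a gap in your proof, since for $n \ge 2$ every step you make (the bound $|C_i| \le m/2 \le n/2$ from the $1/2$-separator property, the no-overshoot bound $|Y| < n/2$, and the inequality $\lceil n/2 \rceil \le 3n/4$) checks out.
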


\vspace{.2cm}
\begin{lemma}\label{lemma: lowerbound of tw(H(t,q,n))}
	$tw(H(t,q,n)) \ge c_1 t q^{n}/\sqrt{n}$ for some constant $c_1$ not depending on $t$ or $q$ when $n$ is sufficiently large.
\end{lemma}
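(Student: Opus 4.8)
The plan is to apply Proposition~\ref{prop: lowerbound treewidth counting neighbors} to $G = H(t,q,n)$, which has $q^n$ vertices: it is enough to find a constant $c>0$ independent of $t$ and $q$ so that every $X \subseteq V(H(t,q,n))$ with $q^n/4 \le |X| \le q^n/2$ obeys $\varPhi(X) \ge ct\,q^n/\sqrt n$, for then $tw(H(t,q,n)) \ge ct\,q^n/\sqrt n - 1$ and the additive $-1$ is absorbed into a slightly smaller $c_1$ once $n$ is large. The crux is to locate where the factor $t$ comes from. For $0 \le i \le t$ write $X_i = \{v : d_H(v,X) \le i\}$ for the closed Hamming $i$-ball around $X$, where $d_H$ is the Hamming distance, so $X_0 = X$. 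Since two vertices of $H(t,q,n)$ are adjacent precisely when their Hamming distance is between $1$ and $t$, the neighbourhood of $X$ is $X_t \setminus X$, and because $H(1,q,n)$ is geodesic (a vertex at Hamming distance $i+1$ from $X$ has a neighbour at distance $i$) the increments telescope into one-step boundaries:
\[
\varPhi(X) = |X_t| - |X| = \sum_{i=0}^{t-1}\bigl(|X_{i+1}| - |X_i|\bigr) = \sum_{i=0}^{t-1}\varPhi_{H(1,q,n)}(X_i),
\]
where $\varPhi_{H(1,q,n)}(X_i)$ is the vertex boundary of $X_i$ in the ordinary Hamming graph. Thus the factor $t$ appears simply as a sum of $t$ ordinary vertex boundaries.

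Next I would supply the isoperimetric estimate that controls each summand. By the vertex-isoperimetric inequality for the Hamming graph --- Harper's theorem~\cite{pw_Hypercube_harper1966optimal} for $q=2$, and its extension underlying the $t=1$ lower bound of Chandran and Kavitha~\cite{tw_pw_HammingGraph_chandran2006treewidth} for general $q$ --- the vertex boundary is minimised by Hamming balls, and a binomial (central-limit) estimate for the shell $\binom{n}{j}(q-1)^j$ around its mode $j \approx n(q-1)/q$ yields a constant $c>0$, independent of $q$ and $n$, with $\varPhi_{H(1,q,n)}(S) \ge c\,q^n/\sqrt n$ whenever $q^n/4 \le |S| \le 3q^n/4$. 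Since $X \subseteq X_i$ forces $|X_i| \ge |X| \ge q^n/4$, the only hypothesis that can fail for a given $X_i$ is the upper bound $|X_i| \le 3q^n/4$.

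Finally I would close with a short dichotomy, taking $n$ large enough relative to $t$ that $t \le \sqrt n/(8c)$. If $|X_i| \le 3q^n/4$ for every $i \in \{0,\dots,t-1\}$, each term of the displayed sum is at least $c\,q^n/\sqrt n$, so $\varPhi(X) \ge ct\,q^n/\sqrt n$. Otherwise let $m$ be the least index with $|X_m| > 3q^n/4$; then $\varPhi(X) \ge |X_m| - |X| > 3q^n/4 - q^n/2 = q^n/4$, and the constraint $t \le \sqrt n/(8c)$ makes $q^n/4 \ge ct\,q^n/\sqrt n$ as well. In either case $\varPhi(X) \ge ct\,q^n/\sqrt n$, which is exactly what Proposition~\ref{prop: lowerbound treewidth counting neighbors} needs.

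I expect the genuine obstacle to be the isoperimetric input rather than the combinatorics above: one must certify that the vertex-isoperimetric bound for $H(1,q,n)$ holds on the whole middle range $[q^n/4, 3q^n/4]$ with a constant $c$ uniform in $q$ (the central shells $\binom{n}{j}(q-1)^j$ must be shown to carry a $\Theta(q^n/\sqrt n)$ fraction of the mass, uniformly in $q$), while the potential runaway growth of the iterated balls $X_i$ beyond the usable range is precisely the difficulty that the second case and the hypothesis $n \gg t$ are designed to absorb.
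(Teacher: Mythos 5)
Your proposal is correct, but it reaches the lower bound by a genuinely different route than the paper. The paper never telescopes: it applies Harper's continuous isoperimetric theorem~\cite{bandwidth_q2inf_harper1999isoperimetric} \emph{directly to the distance-$t$ graph}, obtaining $b_v(m,H(t,q,n)) \ge q^n\min_{x,r}\sum_{i=1}^{t}\binom{n}{r+i}x^{n-r-i}(1-x)^{r+i}$ for $m = q^n\sum_{i=0}^{r}\binom{n}{i}x^{n-i}(1-x)^{i}$, so the factor $t$ arrives prepackaged as a sum of $t$ consecutive shell masses; it then imports the localization $n(1-x)-\sqrt{4nx(1-x)} < r < n(1-x)+\sqrt{4nx(1-x)}$ from~\cite{tw_pw_HammingGraph_chandran2006treewidth} and finishes each shell by Stirling. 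You instead manufacture the factor $t$ combinatorially: the identity $\varPhi(X)=\sum_{i=0}^{t-1}\varPhi_{H(1,q,n)}(X_i)$ is valid (the geodesic argument correctly shows $N_{H(1,q,n)}(X_i)=X_{i+1}\setminus X_i$), and it reduces everything to the \emph{one-step} Hamming isoperimetric bound, with your dichotomy cleanly absorbing the case where an iterated ball $X_m$ escapes the middle range. What each approach buys: yours needs only the $t=1$ isoperimetric input (on the slightly wider window $[q^n/4,3q^n/4]$, a mild extension of the cited range that follows from the same machinery), thereby sidestepping the general-$t$ form of Harper's theorem, which the paper itself concedes in a footnote is ``not explicitly stated'' in the reference and must be inferred; the price is your explicit hypothesis $t \le \sqrt{n}/(8c)$. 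But this costs nothing in substance: the paper's Stirling step silently requires all $t$ shells $r+1,\dots,r+t$ to stay within $O(\sqrt{nx(1-x)})$ of the mean, i.e.\ $n \gg t^2$ as well, and indeed the conclusion forces $t = O(\sqrt{n})$ anyway since $tw(H(t,q,n)) \le q^n-1$ --- consistent with the lemma's ``$n$ sufficiently large.'' Your closing caveat about certifying the uniform-in-$q$ constant on the middle range is the right point of care, but it is exactly the estimate underlying~\cite{tw_pw_HammingGraph_chandran2006treewidth} (the shells form a $\mathrm{Bin}(n,1-1/q)$ profile with standard deviation at most $\sqrt{n}/2$, so shells within $O(\sigma)$ of the mode carry mass $\Omega(q^n/\sqrt{n})$), so your reliance on it is no weaker than the paper's own.
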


\begin{proof}
% Let $b_v(m, G) = \min\{ |N(S)| \mid |S| = m, S \subseteq V(G)\}$.
By Proposition~\ref{prop: lowerbound treewidth counting neighbors}, we have $tw(H(t,q,n)) \ge \min b_v(m, H(t,q,n)) - 1$ over integers $m$ in the range $q^n/4 \le m \le q^n/2$.
So it is sufficient to give a lower bound for $b_v(m, H(t,q,n))$ over $ m \in [q^n/4, q^n/2]$.

In~\cite{bandwidth_q2inf_harper1999isoperimetric}, Harper showed that\footnote{This statement is not explicitly stated in~\cite{bandwidth_q2inf_harper1999isoperimetric}, but can be easily inferred from Theorem~3 on pp.~302.} if
\begin{equation}
	\label{eq: continuous lower bound of counting neighbors}
	m = q^n \sum_{i=0}^{r} \binom{n}{i} x^{n-i}{(1-x)}^{i} \mbox{ for some $x, r$, $0 < x < 1$,}
\end{equation}

then
\begin{equation}
	b_v(m, H(t,q,n)) \ge q^n \min_{x, r} \left\{ \sum_{i=1}^{t} \left( \binom{n}{r+i} x^{n-r-i}{(1-x)}^{r+i} \right) \right\},
\end{equation}

where the minimum is taken over all $x,r$ satisfying Eq~\ref{eq: continuous lower bound of counting neighbors}.
	
% The binomial expansion of $(1-p+p)^{n}$ denotes a binomial distribution with mean $np$ and variance $np(1-p)$. Let $E$ be the event that occurs with probability $p$.
% If $X$ is a random variable that counts the number of occurrences of $E$, then from Chebyshev's inequality, we have

% \begin{equation*}
% 	\Pr [|X-np| \ge \sqrt{4np(1-p)}] \le \frac{1}{4}
% \end{equation*}

% This implies that
% \begin{equation}
% 	\sum_{i=np-\sqrt{4np(1-p)}}^{np+\sqrt{4np(1-p)}} \binom{n}{i}p^{i}(1-p)^{n-i} \ge \frac{3}{4}
% \end{equation}

% Suppose $m=fq^n$, where $\frac{1}{4} \le f \le \frac{1}{2}$.

% Now, if $m = fq^n = q^d \sum_{i=0}^{r} \binom{n}{i} x^{n-i}(1-x)^{i}$, then since $\frac{1}{4} \le f \le \frac{1}{2}$, we have~(letting $p=(1-x)$),

In~\cite{tw_pw_HammingGraph_chandran2006treewidth}, it is proved that when $m = q^n \sum_{i=0}^{r} \binom{n}{i} x^{n-i}{(1-x)}^{i}$, we have

\begin{equation*}
	n(1-x) - \sqrt{4nx(1-x)} < r < n(1-x) + \sqrt{4nx(1-x)}.
\end{equation*}

By Stirling's approximation, it can be shown that for all $r$ in the above range, we have
\[
	q^n \binom{n}{r+i}x^{n-r-i}{(1-x)}^{r+i} \ge c_1(q^n/\sqrt{n}), 1 \le i \le t
\]
for some constant $c_1 > 0$ not depending on $t$ and $q$.
Then, $b_v(m, H(t,q,n)) \ge c_1tq^n/\sqrt{n}$.
\end{proof}
% \subsection{Upper bound of $tw(H(t,q,n))$}

Then we intend to estimate the upper bound via bandwidth.

\vspace{.2cm}
\begin{lemma}\label{lemma: upper bound of pw(H(t,q,n))}
	$tw(H(t,q,n)) \le pw(H(t,q,n)) \le c_2 t q^n/\sqrt{n}$ for some constant $c_2$ not depending on $t$ or $q$ when $n$ is sufficiently large.
\end{lemma}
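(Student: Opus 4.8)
The plan is to bound the pathwidth through the bandwidth, exactly paralleling the route taken for $q=2$ in Lemma~\ref{lemma0}, and then to estimate the relevant vertex boundary by Harper's isoperimetric formula. First I would record the two reductions: since $pw \le bw$ by Eq~\ref{eq: tw pw bw inequality}, and since (as for the hypercube) $H(t,q,n)$ admits a Hales numbering, Harper's identity gives $bw(H(t,q,n)) = \max_{1 \le s \le q^n} b_v(s, H(t,q,n))$. Hence it suffices to show $b_v(s, H(t,q,n)) \le c_2 t q^n/\sqrt{n}$ for \emph{every} $s$, with a constant $c_2$ independent of $t$ and $q$.

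Next I would invoke the achievability counterpart of the isoperimetric bound already used in Lemma~\ref{lemma: lowerbound of tw(H(t,q,n))}: the minimum vertex boundary $b_v(s)$ is attained on Harper's nested optimal sets, so for any admissible representation $s = q^n \sum_{i=0}^{r}\binom{n}{i} x^{n-i}(1-x)^{i}$ with $0<x<1$ one has $b_v(s) \le q^n \sum_{i=1}^{t}\binom{n}{r+i} x^{n-r-i}(1-x)^{r+i}$. Thus I am free to pick, for each $s$, one convenient representation. The crucial point -- and the reason the naive choice fails -- is \emph{which} representation to use: the weight ball $\{v : \mathrm{weight}(v) \le R\}$ corresponds to $x = 1/q$, which is extreme for large $q$ and makes a single shell as large as $\Theta(q^{n}\sqrt{q}/\sqrt{n})$, losing a spurious factor $\sqrt{q}$. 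Instead I would fix $r = \lfloor n/2 \rfloor$ and solve for $x$: as $x$ runs over $(0,1)$ the quantity $q^n \Pr[\mathrm{Bin}(n,1-x) \le r]$ increases continuously from $0$ to $q^n$, so every $s \in (0,q^n)$ is represented by some $x \in (0,1)$ with this fixed $r$.

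With this choice each summand equals $\binom{n}{r+i} x^{n-r-i}(1-x)^{r+i} = \Pr[\mathrm{Bin}(n,1-x) = r+i]$, where the index $j = r+i$ lies within $t = O(1)$ of $n/2$ for all $1 \le i \le t$. The key estimate is then uniform in both $q$ and $x$: for any such $j$, the maximum $\max_{0<x<1}\binom{n}{j} x^{n-j}(1-x)^{j}$ is attained at $x = 1 - j/n \approx 1/2$, and Stirling's approximation gives this maximum as $\Theta(1/\sqrt{n})$ with an absolute constant, exactly as in the estimate behind Lemma~\ref{lemma: lowerbound of tw(H(t,q,n))}. Summing the $t$ terms yields $b_v(s) \le q^n \cdot t \cdot \Theta(1/\sqrt{n}) = c_2 t q^n/\sqrt{n}$ uniformly in $s$, and the lemma follows.

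The main obstacle is precisely this uniformity in $q$: bounding a shell by its global maximum (the equator $j \approx n(1-1/q)$) is too lossy and reintroduces the $\sqrt{q}$ factor, so one must exploit the freedom in the representation to keep the active shells near the symmetric index $n/2$, where the binomial weights are at their $\Theta(1/\sqrt{n})$ scale for \emph{every} $x$. A secondary technical point is handling the integer rounding of the last (fractional) shell in the continuous isoperimetric relaxation, which perturbs each term only by $O(1/\sqrt{n})$ and is absorbed into $c_2$; one should also confirm that Harper's nested optimal solution, hence the equality $bw = \max_s b_v$, is available for $H(t,q,n)$ with $q > 2$ just as in the binary case.
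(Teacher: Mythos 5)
There is a genuine gap, and it sits exactly at the point you deferred to a ``secondary technical point'' at the end. Your chain $pw \le bw = \max_s b_v(s,\cdot)$ needs $H(t,q,n)$ to admit a Hales numbering for $q>2$, and this is not available: Harper's Hales-order machinery is established only for the binary case $H(t,2,n)$ (the paper's Proposition~\ref{prop: hales ordering} rests on Harper's 1966 claim for the distance-generalized \emph{hypercube}), and for $q\ge 3$ the vertex-isoperimetric problem on Hamming graphs is not known to have nested optimal solutions. Without nestedness one only has the easy direction $\max_s b_v(s,G)\le bw(G)$, which is useless for an upper bound. Compounding this, the isoperimetric input you borrow from the proof of Lemma~\ref{lemma: lowerbound of tw(H(t,q,n))} is a \emph{lower} bound on $b_v(m,H(t,q,n))$: it says every set of the given size has boundary at least the shell expression, not that some set achieves it. The ``achievability counterpart'' you assert, $b_v(s) \le q^n\sum_{i=1}^{t}\binom{n}{r+i}x^{n-r-i}(1-x)^{r+i}$, has no source; the $x$-parametrized formula is a continuous relaxation used for the lower bound, not the boundary of actual nested sets in ${[q]}^n$. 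Your central analytic observation is correct and genuinely clever --- fixing $r=\lfloor n/2\rfloor$, solving for $x$, and noting that $\max_{0<x<1}\binom{n}{j}x^{n-j}(1-x)^{j}=\Theta(1/\sqrt{n})$ uniformly for $j$ within $t$ of $n/2$ does avoid the spurious $\sqrt{q}$ --- but it has nothing rigorous to attach to.

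The paper avoids $q$-ary isoperimetry entirely. It collapses $[q]$ to $\{0,1\}$ via $f(i)=0$ for $i\le q/2$ and $f(i)=1$ otherwise (for $q$ even; odd $q$ similarly), inducing a $(q/2)^n$-to-one map $g:V(H(t,q,n))\to V(H(t,2,n))$ that cannot increase Hamming distance. Replacing each bag $P_i$ of a path decomposition of $H(t,2,n)$ by $P_i'=\bigcup_{y\in P_i}g^{-1}(y)$ yields a path decomposition of $H(t,q,n)$, so $pw(H(t,q,n)) \le \left(\frac{q}{2}\right)^n\bigl(pw(H(t,2,n))+1\bigr)-1$, and the exact binary result (Theorem~\ref{thm: bandwidth of generalized Hypercube}) with Stirling gives $bw(H(t,2,n))\le c_2 t 2^n/\sqrt{n}$, finishing the proof. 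If you want to salvage your bandwidth-first route, the same quotient idea works without any Hales claim for $q>2$: compose the binary Hales ordering with $g$, ordering the fibers $g^{-1}(y)$ consecutively and arbitrarily within, to get $bw(H(t,q,n)) \le \left(\frac{q}{2}\right)^n\bigl(bw(H(t,2,n))+1\bigr)$, which again yields $c_2 t q^n/\sqrt{n}$.
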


\begin{proof}
For convenience, we first assume $q$ is even.
The case when $q$ is odd can be handled similarly.
Let $f$ be a function from $[q]$ to $\{0,1\}$ as follows:
\begin{equation*}
	f(i) = \left\{ \begin{array}{cc}
		0 & \mbox{if $1 \le i \le q/2$,} \\
		1 & \mbox{if $q/2 < i \le q$.} \\
\end{array} \right.
\end{equation*}

Suppose that $(a_1, a_2, \ldots, a_n) \in {[q]}^{n}$ is an $n$-vector corresponding to a vertex $x$ of $H(t,q,n)$.
Define function $g$ from  $V(H(t,q,n))$ to  $V(H(t,2,n))$ that maps $x \in V(H(t,q,n))$ to the vertex $g(x) \in V(H(t,2,n))$ which corresponds to the vector $(f(a_1), f(a_1), \ldots, f(a_n))$.
Note that $g$ maps exactly ${(q/2)}^n$ vertices of $H(t,q,n)$ to a given vertex of $H(t,2,n)$.

Let $H(t,2,n)$ have a path decomposition  whose bags are $\{P_i\}$.
By replacing each $P_i$ with $P_i' = \bigcup_{y \in P_i} g^{-1}(y)$,  it is easy to show that $\{P_i'\}$ is a path decomposition of $H(t,q,n)$.
Therefore, $pw(H(t,q,n)) \le pw(H(2,q,n)) \cdot {\left(\frac{q}{2}\right)}^n$.

From Theorem~\ref{thm: bandwidth of generalized Hypercube} and Stirling's approximation, we have $bw(H(t,2,n)) \le c_2t2^n/\sqrt{n}$ for some constant $c_2$.
Therefore, we have $tw(H(t,q,n)) \le pw(H(t,q,n)) \le c_2 t q^n/\sqrt{n}$.
\end{proof}

Combining Lemmas~\ref{lemma: lowerbound of tw(H(t,q,n))} and~\ref{lemma: upper bound of pw(H(t,q,n))}, we can derive Theorem~\ref{thm: asymptotic tw(H(t,q,n))}.

% ------------------------------------------------
% ------- Section -----------------
% ------------------------------------------------
\section{Treewidth of bipartite Kneser graph and Johnson graph}\label{section: treewidth of bipartite Kneser and Johnson}

For positive integers $n$ and $k$ satisfying $n \ge 2k+1$, the bipartite Kneser graph $BK(n, k)$ has all subsets of $[n]$ with $k$ or $n-k$ elements as vertices and an edge between any two vertices when one is a subset of the other.
It is also called \textit{middle cube graph}.
In the following, we will use $k$-subsets and $(n-k)$-subsets of $[n]$ to represent the vertices of bipartite Kneser graph.
We call vertices that are $k$-subsets of $[n]$ the \textit{left part} denoted by $V_L$, and the rest is called the \textit{right part} denoted by $V_R$.
$V_L$ and $V_R$ are two parts of bipartite Kneser graph $BK(n,k)$.

For positive integers $n$ and $k$ satisfying $n > k$, the \textit{Johnson graph} $J(n,k)$ has all subsets of $[n]$ with $k$ elements and an edge between any two vertices when their intersection has exactly $(k-1)$ elements.
Since we have a bijection between all $k$-subsets of $[n]$ and all binary $n$-vectors with exactly $k$ ones,
we can also treat a vertex of $J(n,k)$ as an $n$-vector with exactly $k$ ones.
From this point of view, two vertices are adjacent iff their the Hamming distance of their corresponding $n$-vectors is no more than $2$.
Therefore, $J(n, k)$ is the $k$-th slice of $H(2,2,n)$, that is, $J(n,k)$ is a subgraph of $H(2,2,n)$ induced by vertices from $H(2,2,n)$ corresponding to $n$-vector with exactly $k$ ones.

Let $G(V,E)$ be a graph and $S \subseteq V$  a vertex subset of $G$. Denote the subgraph of $G$ induced by $S$ as $G[S]$.
Recalling the definition of $V_k^{(t,n)}$ in subsection 3.1, we have Proposition~\ref{prop: J(n,k) slice}.

\begin{proposition}\label{prop: J(n,k) slice} We have
	$J(n,k) \cong H(2,2,n)[V^{(2,n)}_k] $.
\end{proposition}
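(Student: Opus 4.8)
The plan is to exhibit the natural bijection between the two vertex sets and then verify that it preserves adjacency in both directions. First I would pin down the vertex sets: by definition $V^{(2,n)}_k$ consists of those vertices of $H(2,2,n)$ whose corresponding $n$-vector has exactly $k$ ones, and there is a canonical map $\psi$ sending such an $n$-vector to its support, a $k$-subset of $[n]$. This $\psi$ is a bijection, and its image is precisely the set of $k$-subsets of $[n]$, which is the vertex set of $J(n,k)$. Thus $\psi$ is a bijection from $V\bigl(H(2,2,n)[V^{(2,n)}_k]\bigr)$ to $V(J(n,k))$.

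The key observation is a parity remark: two binary $n$-vectors of equal weight always have even Hamming distance. Indeed, if both have exactly $k$ ones, then among the coordinates in which they differ, the number changing from $1$ to $0$ must equal the number changing from $0$ to $1$, so the Hamming distance is $0,2,4,\ldots$. Consequently, for two distinct vertices $u,v \in V^{(2,n)}_k$, the defining condition of $H(2,2,n)$ that their Hamming distance be at most $2$ (which is inherited by the induced subgraph) is equivalent to their Hamming distance being exactly $2$.

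Next I would translate distance exactly $2$ into Johnson adjacency. For weight-$k$ vectors, Hamming distance exactly $2$ means one coordinate flips from $1$ to $0$ and another from $0$ to $1$; writing $A=\psi(u)$ and $B=\psi(v)$, this says $B$ is obtained from $A$ by deleting one element and inserting another, i.e. $|A \cap B| = k-1$. This is exactly the adjacency relation defining $J(n,k)$. Reading the chain of equivalences in both directions shows that $\psi$ carries edges to edges and non-edges to non-edges, so $\psi$ is a graph isomorphism, which is the claim.

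I do not anticipate a genuine obstacle here, since the argument is a direct verification once the bijection is set up. The only point needing care is the parity remark: it is precisely what rules out Hamming distance $1$ and makes ``at most $2$'' coincide with ``exactly $2$'' for distinct weight-$k$ vertices, thereby matching the Johnson adjacency condition exactly.
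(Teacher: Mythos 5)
Your proof is correct and follows essentially the same route as the paper, which treats vertices of $J(n,k)$ as weight-$k$ binary $n$-vectors and identifies Johnson adjacency with Hamming distance at most $2$ inside $H(2,2,n)$. Your explicit parity remark (equal-weight vectors have even Hamming distance, so ``at most $2$'' means ``exactly $2$'' for distinct vertices) is a worthwhile detail that the paper leaves implicit.
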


% -----------------------------------------------
\subsection{Treewidth of \texorpdfstring{$BK(n, k)$}{BK(n, k)} when \texorpdfstring{$n$}{n} is large enough}

In this section, we focus on the treewidth of $BK(n,k)$ when $n$ is large enough and give the proof of Theorem~\ref{thm: tw of BK(n,k) when n is large enough}.
Before proof, we need the following proposition.
% The mian result is presented as Theorem~\ref{thm: tw of BK(n,k) when n is large enough}.

\begin{proposition}[\cite{tw_Kneser_Harvey2014}]\label{prop: upperbound of treewidth by independence number}
	For any graph $G$, $tw(G) \le \max\{\Delta(G), |V(G)| - \alpha(G) - 1  \}$, where $\alpha(G)$ is the independent number of $G$.
\end{proposition}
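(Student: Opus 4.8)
The plan is to exhibit one explicit tree-decomposition built from a maximum independent set, i.e.\ to use the chordal-completion idea of turning the complement of an independent set into a clique; since $tw(G)$ is the minimum width over all tree-decompositions of $G$, producing a single decomposition of the right width suffices. Let $I$ be a maximum independent set of $G$, so $|I| = \alpha(G)$, and set $W = V(G) \setminus I$, so that $|W| = |V(G)| - \alpha(G)$. Because $I$ is independent, every edge of $G$ has at least one endpoint in $W$, and for each $v \in I$ all neighbours of $v$ lie in $W$. I would take the tree $T$ to be a star: a central node carrying the bag $X_0 = W$, together with one leaf node for each $v \in I$ carrying the bag $X_v = \{v\} \cup N(v)$, each leaf joined only to the centre.

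Next I would verify the three conditions of Definition~\ref{def: treewidth}. Every vertex of $W$ lies in $X_0$ and every $v \in I$ lies in $X_v$, so the bags cover $V(G)$. For an edge $uv$, the two endpoints cannot both lie in $I$; hence either both lie in $W \subseteq X_0$, or one endpoint, say $u$, lies in $I$ and then $v \in N(u)$, whence $u,v \in X_u$. Thus every edge is covered. The delicate point is the running-intersection condition: for a vertex $w \in W$, the bags containing $w$ are $X_0$ together with those $X_v$ for which $w \in N(v)$, and since $T$ is a star centred at $X_0$ and $w \in X_0$, these bags induce a connected subtree; for a vertex $v \in I$, only the single bag $X_v$ contains it, so the condition holds trivially.

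Finally I would bound the widths. The central bag has $|X_0| = |W| = |V(G)| - \alpha(G)$, and each leaf bag has $|X_v| = 1 + |N(v)| = 1 + d_G(v) \le 1 + \Delta(G)$. Therefore the width $\max_i |X_i| - 1$ is at most $\max\{|V(G)| - \alpha(G),\, \Delta(G) + 1\} - 1 = \max\{|V(G)| - \alpha(G) - 1,\, \Delta(G)\}$, which is exactly the claimed bound. I expect the only genuine subtlety to be the running-intersection check, which is precisely where the star structure is used, with $W$ placed at the centre so that each shared vertex of $W$ occupies the central bag and is thereby distributed consistently across all leaf bags; the degenerate cases $I = \emptyset$ (so $X_0 = V(G)$) and $I = V(G)$ (so $W = \emptyset$ and $\Delta(G)=0$) should be noted but are immediate.
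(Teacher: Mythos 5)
Your proof is correct, and it is essentially the standard argument: the paper states this proposition as a citation to Harvey's work without reproving it, and the proof there is exactly your construction --- a star-shaped tree-decomposition with central bag $V(G)\setminus I$ and a leaf bag $\{v\}\cup N(v)$ for each $v$ in a maximum independent set $I$ (equivalently, the chordal completion making $V(G)\setminus I$ a clique, under which each $v\in I$ is simplicial). Your verification of the three conditions, including the running-intersection check via the star structure, and the resulting width bound $\max\{\Delta(G),\,|V(G)|-\alpha(G)-1\}$ are all accurate.
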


\begin{lemma}\label{lemma: upperbound treewidth of BK(n,k) when n is large enough} We have
	$tw(BK(n,k)) \le \binom{n}{k} - 1$.
\end{lemma}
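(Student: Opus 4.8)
The plan is to apply Proposition~\ref{prop: upperbound of treewidth by independence number} directly, so the whole argument reduces to computing (or bounding) two quantities for $G = BK(n,k)$: its maximum degree $\Delta(G)$ and its independence number $\alpha(G)$. First I would record the basic counts. Since $n \ge 2k+1$, both parts have equal size, $|V_L| = |V_R| = \binom{n}{k}$, so $|V(G)| = 2\binom{n}{k}$. Proposition~\ref{prop: upperbound of treewidth by independence number} gives $tw(G) \le \max\{\Delta(G),\ |V(G)| - \alpha(G) - 1\}$, so it suffices to show that each of the two terms inside the maximum is at most $\binom{n}{k} - 1$.

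For the degree term, I would count adjacencies from each side. A $k$-subset $A \in V_L$ is joined exactly to those $(n-k)$-subsets $B$ with $A \subseteq B$; choosing such a $B$ amounts to choosing the remaining $n-2k$ elements from the $n-k$ elements of $[n]\setminus A$, giving $\deg(A) = \binom{n-k}{n-2k} = \binom{n-k}{k}$. Dually, an $(n-k)$-subset $B \in V_R$ is joined to its $k$-subsets, of which there are $\binom{n-k}{k}$. Hence $G$ is $\binom{n-k}{k}$-regular and $\Delta(G) = \binom{n-k}{k}$. Since $\binom{m}{k}$ is strictly increasing in $m$ for $k \ge 1$ and $n-k < n$, we get $\Delta(G) = \binom{n-k}{k} \le \binom{n}{k} - 1$, which handles the first term.

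For the independence term, I would exploit that $BK(n,k)$ is bipartite: there are no edges inside $V_L$ and none inside $V_R$, so each part is an independent set. Taking $V_L$ (or $V_R$) gives $\alpha(G) \ge \binom{n}{k}$, and therefore
\[
	|V(G)| - \alpha(G) - 1 \le 2\binom{n}{k} - \binom{n}{k} - 1 = \binom{n}{k} - 1.
\]
Combining the two bounds with Proposition~\ref{prop: upperbound of treewidth by independence number} yields $tw(BK(n,k)) \le \max\{\binom{n}{k}-1,\ \binom{n}{k}-1\} = \binom{n}{k} - 1$, as desired.

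There is no genuine obstacle here; the statement is a clean corollary of the independence-number bound, and the only points requiring care are the two elementary verifications, namely that the degree count gives $\binom{n-k}{k}$ (so that $\Delta(G)$ does not exceed the target) and that using a full part as the independent set forces $\alpha(G) \ge \binom{n}{k}$. Both are immediate from the bipartite structure of $BK(n,k)$.
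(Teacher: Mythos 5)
Your proof is correct and takes essentially the same approach as the paper: both apply Proposition~\ref{prop: upperbound of treewidth by independence number} to $BK(n,k)$ using $\Delta(BK(n,k)) = \binom{n-k}{k}$ and the independence number. The only minor difference is that the paper cites a perfect-matching result to obtain $\alpha(BK(n,k)) = \binom{n}{k}$ exactly, whereas you note that the bound $\alpha \ge \binom{n}{k}$ coming from one side of the bipartition already suffices, a small simplification that avoids the external citation.
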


\begin{proof}
From~\cite{factorization_of_BipartiteKneser_jin20201}, we have that bipartite Kneser graph has a perfect matching.
Consequently, $\alpha(BK(n,k)) = \binom{n}{k}$.
% Then, $\alpha(BK(n,k)) \le \binom{n}{k}$, since $BK(n,k)$ has a perfect matching and $\alpha(BK(n,k)) \ge \binom{n}{k}$ since $BK(n,k)$ is a bipartite graph with both parts containing $\binom{n}{k}$ vertices.
Note that $BK(n,k)$ is a regular graph with order $2\binom{n}{k}$ and $\Delta(BK(n,k)) = \binom{n-k}{k}$.
By Proposition~\ref{prop: upperbound of treewidth by independence number}, we have $tw(BK(n,k)) \le \binom{n}{k} - 1$.
\end{proof}

\vspace{.2cm}
\begin{lemma}\label{lemma: lowerbound treewidth of BK(n,k) when n is large enough}
	When $k \ge 2$ and $3\binom{n-k}{k} \ge 2\binom{n}{k}$, $tw(BK(n,k)) \ge \binom{n}{k} - 1$.
\end{lemma}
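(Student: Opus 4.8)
The plan is to prove the matching lower bound by showing that every balanced separator of $BK(n,k)$ is large, and then read off the bound from Corollary~\ref{coro: tw and separator 2/3}. Suppose for contradiction that $tw(BK(n,k)) \le \binom{n}{k}-2$. By Corollary~\ref{coro: tw and separator 2/3} there is a separator $X$ with $|X| \le tw(BK(n,k))+1 \le \binom{n}{k}-1$ and a partition $V-X = A \sqcup B$ with no edges between $A$ and $B$ satisfying $|V-X|/3 \le |A|,|B| \le 2|V-X|/3$. Write $N = \binom{n}{k}$ and $D = \binom{n-k}{k} = \Delta(BK(n,k))$, so the hypothesis reads $D \ge \tfrac{2}{3}N$, and note $N = \binom{n}{k} \ge \binom{2k+1}{k} \ge 10$ since $k \ge 2$. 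Because $|X| \le N-1$ we get $|V-X| = 2N-|X| \ge N+1$, hence $|A|,|B| \ge (N+1)/3 > 0$. The whole argument aims at the contradiction $|A|+|B| \le N$.

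Split each side of the partition according to the two parts of the graph: let $A_L = A \cap V_L$, $A_R = A \cap V_R$, and similarly $B_L, B_R$. The key observation is that the absence of edges between $A$ and $B$ becomes a cross-intersecting condition after complementing on the right part. Indeed, no $k$-set of $A_L$ is contained in an $(n-k)$-set of $B_R$, i.e. $S \cap ([n]\setminus T) \ne \emptyset$ for all $S \in A_L$, $T \in B_R$; thus $A_L$ and $\{[n]\setminus T : T \in B_R\} \subseteq \binom{[n]}{k}$ are cross-intersecting, and likewise $B_L$ and $\{[n]\setminus T : T \in A_R\}$ are cross-intersecting. I would then invoke the standard cross-intersecting inequality (Hilton/Frankl): for nonempty cross-intersecting $\mathcal{A},\mathcal{B} \subseteq \binom{[n]}{k}$ with $n \ge 2k$ one has $|\mathcal{A}|+|\mathcal{B}| \le N - D + 1$. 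Since $D \ge \tfrac{2}{3}N$, this gives $N-D+1 \le N/3+1$, so each applicable pair contributes a sum of at most $N/3+1$. I also record the trivial disjointness bounds $|A_L|+|B_L| \le N$ and $|A_R|+|B_R| \le N$ coming from $V_L$ and $V_R$.

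The main case is when all four parts $A_L, A_R, B_L, B_R$ are nonempty: adding the two cross-intersecting bounds yields $|A|+|B| \le 2(N-D+1) \le 2N/3+2 \le N$ (using $N \ge 6$), the desired contradiction. The remaining work is the degenerate cases where some part is empty, and this is where I expect the real effort. If $A_L = B_L = \emptyset$ (resp. $A_R = B_R = \emptyset$) then $V_L \subseteq X$ (resp. $V_R \subseteq X$), so $|X| \ge N$, contradicting $|X| \le N-1$; and $A=\emptyset$ or $B=\emptyset$ is impossible by the balance lower bound. In each case where exactly one of the four parts is empty, I would combine the one cross-intersecting bound that still applies with the balance bound $|A|,|B| \ge (N+1)/3$: for instance if $A_L=\emptyset$ then $|A|=|A_R| \ge (N+1)/3$, while the pair $(B_L, \{[n]\setminus T: T\in A_R\})$ gives $|A_R|+|B_L| \le N/3+1$, forcing $|B_L| \le 2/3$, i.e. $B_L=\emptyset$ — contradicting the case assumption. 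The three symmetric single-empty cases, and the ``cross'' cases $A_L=B_R=\emptyset$ or $A_R=B_L=\emptyset$ (where one bound already gives $|A|+|B| \le N/3+1$), are handled identically.

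The hard part is precisely this bookkeeping over the degenerate configurations: the clean cross-intersecting sum bound only applies when both families are nonempty, and it is the interplay of the balance condition with the hypothesis $D \ge \tfrac{2}{3}N$ that collapses every degenerate case. Before finalizing I would pin down the exact statement and citation of the cross-intersecting inequality $|\mathcal{A}|+|\mathcal{B}| \le \binom{n}{k}-\binom{n-k}{k}+1$ (checking it holds for $n \ge 2k$, which is guaranteed here since $n \ge 2k+1$), and confirm the threshold $N \ge 6$ needed in the main case, which follows from $k \ge 2$. Together with Lemma~\ref{lemma: upperbound treewidth of BK(n,k) when n is large enough} this yields $tw(BK(n,k)) = \binom{n}{k}-1$.
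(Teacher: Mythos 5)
Your proposal is correct and takes essentially the same route as the paper's proof: assume $tw(BK(n,k)) < \binom{n}{k}-1$, invoke Corollary~\ref{coro: tw and separator 2/3} to get a balanced separator $X$ with $|X|<\binom{n}{k}$, complement the right-side sets so that the no-edge condition between $A$ and $B$ becomes a cross-intersecting condition, and apply Frankl's bound $|\mathcal{A}|+|\mathcal{B}|\le\binom{n}{k}-\binom{n-k}{k}+1$ together with $3\binom{n-k}{k}\ge 2\binom{n}{k}$ to force $|X|\ge\binom{n}{k}$ in every case. The only difference is organizational: the paper fixes $A_L,B_R\neq\emptyset$ WLOG and splits on $A_R,B_L$ (using $|A|+|B|\le 3|B|$ in the mixed case), while you enumerate emptiness patterns of the four parts and use the balance bound $|A|,|B|\ge(|V|-|X|)/3$ to collapse the single-empty case — the same ingredients and arithmetic.
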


\begin{proof}
Denote $BK(n,k)$ by $G$ and $V(G)=V_L\cup V_R$, where $V_L=\binom{[n]}{k}$ and $V_R=\binom{[n]}{n-k}$.
Suppose  $tw(G) < \binom{n}{k} - 1$. From Corollary~\ref{coro: tw and separator 2/3}, there exists a separator $X$ of $G$ with $|X| < \binom{n}{k}$ such that
there exists non-empty vertex set $A$ and $B$ with $A \cup B = V(G) - X$, $A \cap B = \emptyset$, $ |V(G)-X|/3 \le |A|,|B| \le 2|V(G)-X|/3$ and there is no edge between $A$ and $B$.
Let  $A_L = A \cap V_L$, $A_R = A \cap V_R$, $B_L= B \cap V_L$ and $B_R = B \cap V_R$~(see Figure~\ref{fig: BK}).

\begin{figure}[t]
	\centering         %使图片居中放置
	\includegraphics[width=0.6\linewidth]{./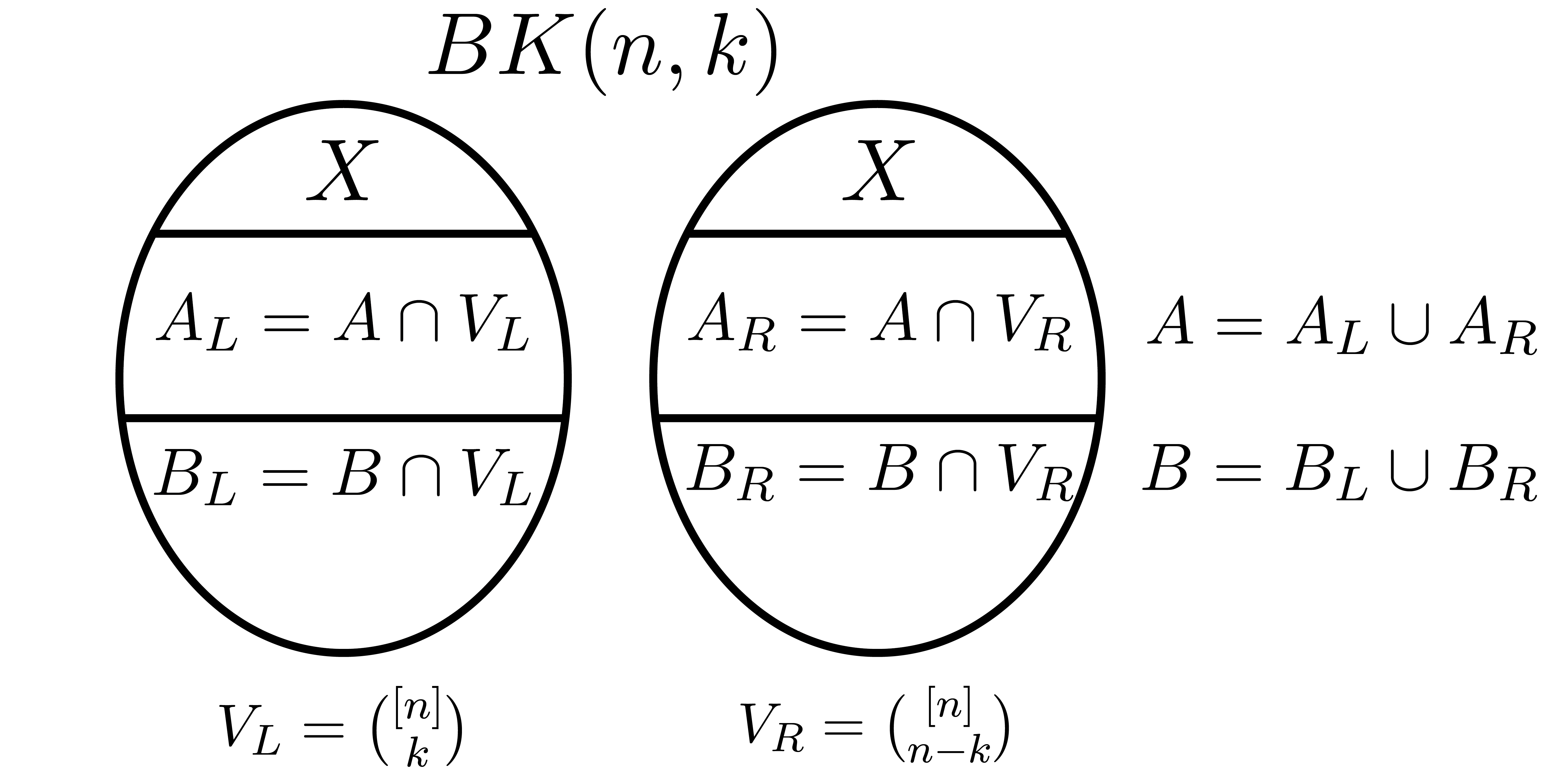}
	\caption{Sketch graph of $BK(n,k)$. The condition that there is no edge between $A$ and $B$ equals to there is no edge between $A_L$ and $B_R$, and between $A_R$ and $B_R$.}\label{fig: BK}
\end{figure}

Since $A,B$ are nonempty and $|X| < \binom{n}{k}$,  we assume without loss of generality that $A_L$ and $B_R$ are nonempty.
Let
\begin{equation*}
\begin{aligned}
	\mathcal{A} &= \left\{ S \in \binom{[n]}{k} \mid S \in A_L \right\}, \\
	\mathcal{B} &= \left\{ S \in \binom{[n]}{n-k} \mid S \in B_R \right\}.
\end{aligned}
\end{equation*}
Let $\mathcal{C} = \{ [n] - S \mid S \in \mathcal{B}\} \subseteq \binom{[n]}{k}$. Since there is no edge between $A_L$ and $B_R$, for any $S_1 \in \mathcal{A}$ and $S_2 \in \mathcal{B}$, $S_1 \nsubseteq S_2$
which implies $S_1 \cap ([n] - S_2) \neq \emptyset$. 

\begin{definition}[Cross-intersecting families]
	Let $\mathcal{A}$ and $\mathcal{B}$ be two families of subsets of a finite set $X$.
	We say that $\mathcal{A}$ and $\mathcal{B}$ are \textit{cross-intersecting} if for any $A \in \mathcal{A}$ and $B \in \mathcal{B}$, $A \cap B \neq \emptyset$.
\end{definition}

Hence, $\mathcal{A}$ and $\mathcal{C}$ is cross-intersecting.
By the properties of cross-intersecting families from~\cite{crossintersecting_frankl1992some}, we have $|\mathcal{A}|+|\mathcal{C}| \le \binom{n}{k} - \binom{n-k}{k} +1$.
Notice that $|\mathcal{A}| = |A_L|$ and $|\mathcal{C}| = |\mathcal{B}| = |B_R|$.
Then we have $|A_L|+|B_R| \le \binom{n}{k} - \binom{n-k}{k} +1$.

\textbf{If $A_R, B_L$ are both nonempty,} then, similarly, we have $|A_R| + |B_L| \le \binom{n}{k} - \binom{n-k}{k} +1$.
Hence, $|A|+|B| \le 2\left( \binom{n}{k} - \binom{n-k}{k} +1 \right)$ and $|X| = |V(G)| - |A| - |B| \ge 2\binom{n-k}{k} - 2$.
Since $k \ge 2$ and $3\binom{n-k}{k} \ge 2 \binom{n}{k}$, we have $2\binom{n-k}{k} - 2 \ge \binom{n}{k}$ which derives a contradiction with $|X| < \binom{n}{k}$.

\textbf{If $A_R, B_L$ are both empty}, then we have $|A| + |B| = |A_L| + |B_R| \le \binom{n}{k} - \binom{n-k}{k} +1$.
Hence, $|X| = |V(G)| - |A| - |B| \ge \binom{n}{k} + \binom{n-k}{k} - 1 \ge \binom{n}{k}$, a contradiction.

\textbf{If there is only one empty set in $\{A_R, B_L\}$}, say $A_R \neq \emptyset$ and $B_L=\emptyset$, then
we have $|B| = |B_R| \le \binom{n}{k} - \binom{n-k}{k}$ by $|A_L|+|B_R| \le \binom{n}{k} - \binom{n-k}{k} +1$ and $A_L\not=\emptyset$. Since $|V(G)-X|/3 \le |A|,|B| \le 2|V(G)-X|/3$, we have $|A| + |B| \le 3|B| \le 3\left( \binom{n}{k} - \binom{n-k}{k} \right)$.
Since $3\binom{n-k}{k} \ge 2 \binom{n}{k}$, we have $|X| = |V(G)| - |A| - |B| \ge 3\binom{n-k}{k} - \binom{n}{k} \ge \binom{n}{k}$, a contradiction.
% From above all, when $k \ge 2$ and $3\binom{n-k}{k} \ge 2\binom{n}{k}$, we have $tw(G) \ge \binom{n}{k} - 1$.
\end{proof}

Theorem~\ref{thm: tw of BK(n,k) when n is large enough} can be easily derived from Lemmas~\ref{lemma: upperbound treewidth of BK(n,k) when n is large enough} and~\ref{lemma: lowerbound treewidth of BK(n,k) when n is large enough}.

% -----------------------------------------------
\subsection{Treewidth of \texorpdfstring{$BK(2k+1, k)$}{BK(2k+1, k)} and \texorpdfstring{$J(2k+1, k)$}{J(2k+1, k)}}

When $n$ is large enough, the treewidth of the bipartite Kneser graph can be exactly calculated by Theorem~\ref{thm: tw of BK(n,k) when n is large enough}.
Now we focus on the treewidth of the bipartite Kneser graph when $n$ is small and give the proof of Theorem~\ref{thm: tw BK J uplow}.
% The main theorem of this subsection is shown as following.

% When $k$ goes to infinity, we now have that
% \begin{equation*}
% \begin{aligned}
% 	tw(BK(2k+1, k)), tw(J(2k+1,k)) = O(\binom{2k+1}{k}), \\
% 	tw(BK(2k+1, k)), tw(J(2k+1,k)) = \Omega(\frac{1}{k}\binom{2k+1}{k}). \\
% \end{aligned}
% \end{equation*}

% \vspace{.2cm}
In order to prove our reslt, we need more definitions. A graph $G$ is chordal if and only if, in any cycle of
length larger than 3 in $G$, there exists a chord connecting two nonadjacent vertices of the cycle. Given a graph $G$, define $\omega(G)$, the clique number  of $G$ to be the number of vertices of the largest clique in $G$.
The treewidth of a graph $G$ has a close relationship with its chordal supergraph as Proposition~\ref{prop: treewidth and chordal relation} shows.

\vskip.2cm
\begin{proposition}[\cite{GraphMinor_2_robertson1986graph}]\label{prop: treewidth and chordal relation}
Given a graph $G$, $tw(G) = \min \{ \omega(H) - 1\mid G \subseteq H, \mbox{$H$ is chordal}\}$.
\end{proposition}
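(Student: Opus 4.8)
The plan is to prove the two inequalities separately, establishing that $tw(G)$ is simultaneously a lower and an upper bound for $\min\{\omega(H)-1 \mid G\subseteq H,\ H\text{ chordal}\}$.

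For the direction $tw(G)\le \omega(H)-1$ (valid for \emph{every} chordal supergraph $H$ of $G$), I would first invoke the classical fact that every chordal graph admits a \emph{clique tree}, i.e.\ a tree decomposition whose bags are precisely the maximal cliques of the graph; this is Gavril's representation of chordal graphs as intersection graphs of subtrees of a tree. Given any chordal $H\supseteq G$, such a clique tree $(X,T)$ of $H$ has width $\omega(H)-1$. Since every edge of $G$ is also an edge of $H$, the same pair $(X,T)$ is a tree decomposition of $G$ as well, whence $tw(G)\le \omega(H)-1$. Minimizing over all chordal $H\supseteq G$ settles this direction.

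For the reverse inequality I would start from an optimal tree decomposition $(X,T)$ of $G$ of width $w=tw(G)$ and perform the standard \emph{fill-in} construction: let $H$ be the supergraph of $G$ in which two vertices are joined whenever they lie together in some bag $X_i$. By construction $G\subseteq H$, each bag $X_i$ induces a clique of $H$, and $(X,T)$ remains a valid tree decomposition of $H$, since the first two axioms of Definition~\ref{def: treewidth} are unaffected and every newly added edge joins two vertices of a common bag. It then remains to verify two things: that $H$ is chordal and that $\omega(H)\le w+1$. For chordality, associate to each vertex $v$ its occurrence set $T_v=\{i\in I \mid v\in X_i\}$, which is a subtree of $T$ by the third axiom of Definition~\ref{def: treewidth}; because each bag is now a clique, $u$ and $v$ are adjacent in $H$ exactly when $T_u\cap T_v\neq\emptyset$, so $H$ is the intersection graph of subtrees of a tree and hence chordal by Gavril's theorem. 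For the clique bound, I would use the Helly property of subtrees of a tree: the subtrees $T_v$ indexed by any clique of $H$ pairwise intersect and therefore share a common node $i$, which forces the whole clique into the single bag $X_i$; consequently $\omega(H)\le \max_i|X_i| = w+1$. This produces a chordal supergraph with $\omega(H)-1\le w = tw(G)$, completing the second direction.

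The main obstacle — in fact essentially the only nontrivial content — is the joint exploitation of the subtree-intersection viewpoint: both the chordality of the fill-in $H$ and the clique-containment bound $\omega(H)\le w+1$ rest on the Helly property of subtrees of a tree (equivalently, on Gavril's theorem). I would therefore isolate these as preliminary lemmas, after which both inequalities reduce to routine checks of the tree-decomposition axioms.
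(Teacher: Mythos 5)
Your proposal is correct. Note that the paper offers no proof of this proposition at all --- it is quoted from Robertson and Seymour as a known fact --- so there is nothing internal to compare against; what you have written is the standard, complete argument supplying that missing proof. Both halves are sound: the clique tree of any chordal supergraph $H$ is a tree decomposition of $G$ of width $\omega(H)-1$, and the fill-in of an optimal tree decomposition yields a chordal supergraph (as the intersection graph of the occurrence subtrees $T_v$) whose clique number is controlled by the Helly property of subtrees of a tree, exactly as you isolate it. Two minor remarks: in the first direction you should either assume $V(H)=V(G)$ (the intended reading, i.e.\ the minimum ranges over chordal completions on the same vertex set) or observe that restricting the bags of the clique tree to $V(G)$ still gives a valid tree decomposition of $G$; and in the second direction you do not need the full equality $tw(H)=\omega(H)-1$ for chordal $H$ anywhere, only the two facts you prove, so the argument is as economical as it can be.
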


Using Proposition~\ref{prop: treewidth and chordal relation}, we can prove the relationship between the treewidth of the bipartite Kneser graph and Johnson graph as Lemma~\ref{lemma: treewidth relationship between bipartite Kneser graph and Johnson graph}. 
We first explain the proof ideas. Note that $BK(n,k)$ is a bipartite graph with two parts $\binom{[n]}{k}$ and $\binom{[n]}{n-k}$. And the vertex set of $J(n,k)$ is $\binom{[n]}{k}$ which is the same as one part of $BK(n,k)$. Thus, we can embed $J(n,k)$ into $BK(n,k)$ by letting the $\binom{[n]}{k}$ part of $BK(n,k)$ is isomorphic to $J(n,k)$. We then will show the result graph is chordal.
After calculating its clique number and then using Proposition~\ref{prop: treewidth and chordal relation}, we can derive our result.

\begin{lemma}\label{lemma: treewidth relationship between bipartite Kneser graph and Johnson graph}
	For any positive integer $k $, $tw(BK(2k+1,k)) \le tw(J(2k+1,k)) $.
\end{lemma}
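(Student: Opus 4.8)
The plan is to invoke Proposition~\ref{prop: treewidth and chordal relation}, which lets me read off $tw(J(2k+1,k))$ from an optimal chordal completion of $J(2k+1,k)$ and then transfer that completion to $BK(2k+1,k)$. Write $n=2k+1$, so the two parts of $BK(2k+1,k)$ are $V_L=\binom{[2k+1]}{k}$ and $V_R=\binom{[2k+1]}{k+1}$, and a right vertex $B\in V_R$ is adjacent in $BK$ exactly to the $k+1$ sets $A\subseteq B$ with $|A|=k$. First I would let $H$ be a chordal supergraph of $J(2k+1,k)$ achieving $\omega(H)-1=tw(J(2k+1,k))$, and identify its vertex set with $V_L$, so that $H$ records a chordal set of adjacencies inside the left part of $BK(2k+1,k)$.

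The observation driving the construction is that for each right vertex $B$ its $BK$-neighborhood $N_{BK}(B)=\{A\in\binom{[2k+1]}{k}\mid A\subseteq B\}$ is a clique in $J(2k+1,k)$: any two $k$-subsets of the same $(k+1)$-set $B$ meet in $k-1$ elements and are therefore adjacent in $J$. Hence $N_{BK}(B)$ is a clique of size $k+1$ in $H$ as well. I would now build a supergraph $H'$ of $BK(2k+1,k)$ on $V_L\cup V_R$ by placing all edges of $H$ on $V_L$ and joining each $B\in V_R$ to the vertices of $N_{BK}(B)$. Since distinct right vertices are non-adjacent and each $N_{BK}(B)\subseteq V_L$ is a clique, every right vertex is simplicial in $H'$; adding them one at a time to the chordal graph $H$ preserves chordality, so $H'$ is chordal. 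Equivalently, the right vertices furnish a perfect elimination ordering that reduces $H'$ back to $H$.

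To finish, I would bound $\omega(H')$. Right vertices are pairwise non-adjacent, so any clique of $H'$ meets $V_R$ in at most one vertex; a clique through $B$ has the form $\{B\}\cup K$ with $K\subseteq N_{BK}(B)$, of size at most $(k+1)+1=k+2$, whereas a clique inside $V_L$ has size at most $\omega(H)$. Thus $\omega(H')\le\max\{\omega(H),\,k+2\}$, and Proposition~\ref{prop: treewidth and chordal relation} gives $tw(BK(2k+1,k))\le\omega(H')-1\le\max\{tw(J(2k+1,k)),\,k+1\}$. The remaining point, which I expect to be the crux, is to show $tw(J(2k+1,k))\ge k+1$ so that this maximum collapses to $tw(J(2k+1,k))$. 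For this I would exhibit a clique of size $k+2$ in $J(2k+1,k)$: fixing any $(k-1)$-set $C$, the $k+2$ sets $C\cup\{x\}$ with $x\in[2k+1]\setminus C$ pairwise intersect in exactly $k-1$ elements and hence form a clique, so $\omega(J(2k+1,k))\ge k+2$ and $tw(J(2k+1,k))\ge\omega(J(2k+1,k))-1\ge k+1$. Combining the two estimates yields $tw(BK(2k+1,k))\le tw(J(2k+1,k))$, as claimed.
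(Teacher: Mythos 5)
Your proof is correct and follows essentially the same route as the paper: embed an optimal chordal completion $H$ of $J(2k+1,k)$ into the left part of $BK(2k+1,k)$, observe that each right vertex is simplicial because its neighborhood $\{A \in \binom{[2k+1]}{k} \mid A \subseteq B\}$ is a clique in $J$, conclude chordality of the resulting supergraph, and bound its clique number by $\max\{\omega(H), k+2\}$. The only difference is cosmetic: you collapse the maximum via the explicit clique $\{C \cup \{x\} \mid x \in [2k+1]\setminus C\}$ of size $k+2$, giving $tw(J(2k+1,k)) \ge k+1$ directly from Proposition~\ref{prop: treewidth and chordal relation}, whereas the paper invokes the minimum-degree bound $tw(G) \ge \delta(G)$ and the $k(k+1)$-regularity of $J(2k+1,k)$ — both steps are valid.
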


\begin{proof}
% if $u,v$ are two vertices in the left part of $BK(n,k)$ and $A_u$ and $A_v$ are the corresponding $k$-subsets of $[n]$ of $u$ and $v$ respectively.
% If $|A_u \cap A_v| = k-1$, then we add an edge $uv$ resulting in a graph $BK'(n,k)$.
% Noticing that the left part of $BK'(n,k)$ is exactly $J(n,k)$.
Let $n= 2k+1$.
By Proposition~\ref{prop: treewidth and chordal relation}, there exists a chordal graph $H$ such that $J(n,k)$ is a subgraph of $H$ and $\omega(H) - 1 = tw(J(n,k))$.
Add edges to the left part of $BK(n,k)$ such that the left part is isomorphic to $H$ and denote the result graph as $BK'(n,k)$.
That is, $BK'(n,k)[V_L] \cong H$.

We first claim that $BK'(n,k)$ is chordal. Suppose $C$ is a cycle in $BK'(n,k)$ with length larger than $3$.
We first consider the case that there is a vertex $v \in V(C)\cap V_R$. Let $u_1$ and $u_2$ be the two neighbors of $v$ on $C$.
Then $u_1,u_2\in V_L$. Let $A_v \in \binom{[n]}{k+1}$ be the corresponding set of $v$ and $A_{u_1}, A_{u_2} \in \binom{[n]}{k}$ the corresponding sets of $u_1$ and $u_2$ respectively.
Since $u_1v, u_2v \in E(BK'(n,k))$, we have $A_{u_1} \subseteq A_v$ and $A_{u_2} \subseteq A_v$.
Then $|A_{u_1} \cap A_{u_2}| = k-1$. Hence, $u_1u_2 \in E(BK'(n,k))$.
Now assume all vertices of $C$ are from the left side. Since $H$ is chordal, there must exist a chord in $C$.
From above all, we have that $BK'(n,k)$ is chordal.

Let $W$ be a clique of $BK'(n,k)$.
If there exists a vertex $v \in W$ from the right side, then $|W| \le k+1+1$ by $d(v) = k+1$.
If all vertices of $W$ are from the left part, then $|W| \le \omega(H) = tw(J(n,k)) + 1$.
Hence
\begin{equation*}
 tw(BK(n,k)) \le \omega(BK'(n,k))-1 \le \max\{tw(J(n,k)), k+1\}.
\end{equation*}

Notice that $J(2k+1, k)$ is $k(k+1)$-regular. Since $tw(G) \ge \delta(G)$~\cite{tw_lb_degree_koster2005degree} for any graph $G$, we have $tw(J(2k+1,k)) \ge k(k+1) \ge k+1$.
Then $tw(BK(2k+1, k)) \le tw(J(2k+1, k))$.
\end{proof}

\begin{lemma}\label{lemma: upper bound treewidth of Johnson graph}
	For positive integers $n$ and $k$ satisfying $n>k$,
	$tw(J(n,k)) \le bw(M^{(2,n)}_{k,k}) = r(M^{(2,n)}_{k,k}) - \binom{n}{k}$.
\end{lemma}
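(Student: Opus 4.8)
The plan is to recognize that $M^{(2,n)}_{k,k}$ is nothing but the adjacency matrix of $J(n,k)$ under one particular vertex ordering, and then to chain together the general inequalities relating bandwidth, pathwidth and treewidth. First I would invoke Proposition~\ref{prop: J(n,k) slice}, which identifies $J(n,k)$ with the induced subgraph $H(2,2,n)[V^{(2,n)}_k]$. Under this identification the vertices of $J(n,k)$ correspond exactly to the rows of $A^{(n)}_k$, and the restriction $\eta^{(n)}_k = \eta^{(n)}|_{V^{(2,n)}_k}$ of the Hales numbering to this slice is an ordering of $J(n,k)$. Since two $k$-subsets are adjacent in $J(n,k)$ precisely when their corresponding $n$-vectors are at Hamming distance $2$, the adjacency matrix of $J(n,k)$ with respect to $\eta^{(n)}_k$ is exactly $M^{(2,n)}_{k,k}$.

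Next I would apply Remark~\ref{rmk: bandwidth}: for the fixed ordering $\eta^{(n)}_k$, the quantity $\max_{e \in E(J(n,k))} \Delta(e, \eta^{(n)}_k)$ equals $bw(M^{(2,n)}_{k,k})$. Because the graph bandwidth $bw(J(n,k))$ is the minimum of $\max_e \Delta(e,\phi)$ taken over \emph{all} orderings $\phi$, the particular ordering $\eta^{(n)}_k$ can only overestimate it, giving $bw(J(n,k)) \le bw(M^{(2,n)}_{k,k})$. Combining this with the inequality $tw(G)\le pw(G)\le bw(G)$ from Eq~\ref{eq: tw pw bw inequality}, applied to $G = J(n,k)$, yields $tw(J(n,k)) \le bw(M^{(2,n)}_{k,k})$.

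Finally, for the stated equality I would observe that $M^{(2,n)}_{k,k}$ is a symmetric matrix whose order $\binom{n}{k}\times\binom{n}{k}$ equals the number of vertices of $J(n,k)$, so Eq~\ref{eq: radius bandwidth relation} applies and gives $r(M^{(2,n)}_{k,k}) = bw(M^{(2,n)}_{k,k}) + \binom{n}{k}$, that is, $bw(M^{(2,n)}_{k,k}) = r(M^{(2,n)}_{k,k}) - \binom{n}{k}$.

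There is essentially no hard step here, as the lemma is a bookkeeping consequence of facts already established. The only points requiring care are verifying that restricting $\eta^{(n)}$ to the slice $V^{(2,n)}_k$ genuinely reproduces $M^{(2,n)}_{k,k}$ in the sense of the adjacency-matrix definition, and checking that passing from a single fixed ordering to the graph bandwidth is an inequality in the correct direction. Once these are in place the conclusion is immediate.
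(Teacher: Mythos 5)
Your proposal is correct and takes essentially the same route as the paper's proof: identify $J(n,k)$ with the slice $H(2,2,n)[V^{(2,n)}_k]$ via Proposition~\ref{prop: J(n,k) slice}, observe that $M^{(2,n)}_{k,k}$ is the adjacency matrix of this graph under the restricted Hales ordering $\eta^{(n)}_k$, bound $bw(J(n,k)) \le bw(M^{(2,n)}_{k,k})$ via Remark~\ref{rmk: bandwidth}, and conclude with Eq~\ref{eq: tw pw bw inequality} and Eq~\ref{eq: radius bandwidth relation} applied with $s=\binom{n}{k}$. There are no gaps; this matches the paper step for step.
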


\begin{proof}
%Let $t=2$.
Let $G=H(2,2,n)[V_k^{(2,n)}]$ for short.
By Proposition~\ref{prop: J(n,k) slice}, we have $G \cong J(n,k)$.

Let $t=2$ in subsection 3.1. Recalling that $\eta' \triangleq \eta_{k}^{(n)}$ is an ordering of $V_k^{(2,n)}$, then $\eta'$ is also an ordering of $G$.
Note that the adjacency matrix of $G$ with the ordering $\eta'$ is exactly $M_{k,k}^{(2,n)}$.
By Remark~\ref{rmk: bandwidth} and the definition of bandwidth, we have
\begin{equation}
	bw(M_{k,k}^{(2,n)}) = \max \limits_{e \in E(G)}\Delta(e, \eta') \ge \min_{\eta} \max \limits_{e \in E(G)}\Delta(e, \eta) = bw(G) = bw(J(n,k)).
\end{equation}

Combining Eq~\ref{eq: tw pw bw inequality} and Eq~\ref{eq: radius bandwidth relation} with $s=|V_k^{(2,n)}|=\binom{n}{k}$, we can derive the lemma.
\end{proof}

Specifically, take $n=2k+1$, and then from Lemma~\ref{lemma: guessing r^(t,n)_k1,k2},
we can derive the asymptotic behavior of $bw(M^{(2,2k+1)}_{k,k})$ by calculating:
% \begin{equation}
% \begin{aligned}
% 	bw(M^{(2,n)}_{k,k}) &= r(M^{(2,n)}_{k,k}) - \binom{n}{k} \\
% 	& = A_{2,n,k,1}^{(1)} + A_{2,n,k,1}^{(2)} + C_{2,n,k,1} \\
% 	& =  \sum \limits_{a=0}^{k-2} \left( \binom{2a+1}{a} - \binom{2a+1}{a-1} \right) + \sum \limits_{m = 2k}^{n-1} \left( \binom{m-1}{k-1} - \binom{m-1}{k-2} \right)  + \binom{n-1}{k-1}\\
% 	& =  \sum \limits_{a=0}^{k-2} \left( \binom{2a+1}{a} - \binom{2a+1}{a-1} \right) + \sum \limits_{m = 2k+1}^{n} \binom{m-2}{k-1} + \binom{2k-1}{k-1}. \\
% \end{aligned}
% \end{equation}

% The asymptotic behavior of $bw(M^{(2,2k+1)}_{k,k})$ is
\begin{equation}
\begin{aligned}
	\lim \limits_{k \rightarrow +\infty} bw(M^{(2,2k+1)}_{k,k})/\binom{2k+1}{k} & = 1/2.
\end{aligned}
\end{equation}

Therefore, $tw(J(2k+1,k)) = O(\binom{2k+1}{k})$.

\vspace{.2cm}
\begin{proposition}[\cite{tw_lb_spectral_chandran2003spectral}]\label{prop: tw lb spectral}
	Suppose $G$ is a $k$-regular graph with $n$ vertices and $A(G)$ is the adjacency matrix of $G$.
	Let $\mu(G) = k - \lambda(G)$ where $\lambda(G)$ is the second-largest eigenvalue of $A(G)$.
	Then
	\[
	tw(G) \ge \left\lfloor \frac{3n}{4} \frac{\mu(G)}{\Delta(G) + 2\mu(G)} \right\rfloor - 1.	
	\]
\end{proposition}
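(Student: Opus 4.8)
This is the spectral treewidth bound of Chandran and Subramanian, and the plan is to recover it by combining the separator characterisation of treewidth with a spectral edge-isoperimetric inequality for regular graphs. The argument proceeds by contraposition on the size of a balanced separator. By Corollary~\ref{coro: tw and separator 2/3}, $G$ has a separator $X$ with $|X|\le tw(G)+1$ whose removal leaves two parts $A,B$ with no edges between them and with $|A|,|B|\in[\,|V(G)-X|/3,\ 2|V(G)-X|/3\,]$. It therefore suffices to show that the spectral gap $\mu(G)$ forces $|X|\ge\frac{3n}{4}\cdot\frac{\mu(G)}{\Delta(G)+2\mu(G)}$, since then $tw(G)\ge|X|-1$ gives the claim, with the floor absorbing the integrality of $|X|$.

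The analytic core is the inequality
\[
	e(S,V-S)\ \ge\ \mu(G)\,\frac{|S|\,(n-|S|)}{n}\qquad\text{for every } S\subseteq V(G),
\]
where $e(S,V-S)$ counts the edges with exactly one endpoint in $S$. To see this, write $L=kI-A(G)$ for the Laplacian; since $G$ is $k$-regular, $\mu(G)=k-\lambda(G)$ is precisely the second-smallest eigenvalue of $L$. The vector $\mathbf{1}_S-\tfrac{|S|}{n}\mathbf{1}$ is orthogonal to the all-ones eigenvector, so its Rayleigh quotient is at least $\mu(G)$; evaluating the numerator $\mathbf{1}_S^{\top}L\,\mathbf{1}_S=e(S,V-S)$ and the denominator $|S|(n-|S|)/n$ yields the bound. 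The remaining task is to convert this control on the edge boundary into control on the vertex boundary $\varPhi(A)$: because there are no $A$--$B$ edges we have $N(A)\subseteq X$, hence $|X|\ge\varPhi(A)$, so any lower bound on $\varPhi(A)$ in terms of $\mu(G)$, $\Delta(G)$ and $|A|$ transfers directly to $|X|$.

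The conversion is where the exact constants are born, and it is the main obstacle. Merely dividing the edge bound by $\Delta(G)$ yields denominator $\Delta(G)$; to obtain $\Delta(G)+2\mu(G)$ one applies the edge-isoperimetric inequality not only to $A$ but also to the enlarged set $A\cup N(A)$, whose outgoing edges again emanate only from $N(A)$, and then accounts for all edges incident to $N(A)$ (those to $A$, those inside $N(A)$, and those leaving $A\cup N(A)$). This produces a quadratic inequality in $\varPhi(A)$; solving it, substituting the balanced value $|A|=\Theta(n)$ permitted by Corollary~\ref{coro: tw and separator 2/3}, and optimising over the admissible balance ratio is what produces the stated denominator $\Delta(G)+2\mu(G)$ and the constant $3/4$. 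Tracking these constants is the delicate part, whereas the edge inequality above and the final rounding are routine. Looking ahead, this proposition is exactly the tool needed to finish the lower bound in Theorem~\ref{thm: tw BK J uplow}: the graph $BK(2k+1,k)$ is $(k+1)$-regular on $2\binom{2k+1}{k}$ vertices, and because its second-largest adjacency eigenvalue equals $k$ one has $\mu=1$, so the proposition yields $tw(BK(2k+1,k))\ge c_1\frac{1}{k}\binom{2k+1}{k}$.
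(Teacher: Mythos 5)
The parts you actually carry out are fine: the Rayleigh-quotient proof of $e(S,V-S)\ge \mu(G)\,|S|(n-|S|)/n$ is correct (the centered indicator is orthogonal to $\mathbf{1}$, $L\mathbf{1}=0$, and $\mathbf{1}_S^{\top}L\mathbf{1}_S=e(S,V-S)$), as are the observations that $\mu(G)$ is the Fiedler value of the Laplacian for a regular graph, that $N(A)\subseteq X$, and that the floor handles integrality; the closing application to $BK(2k+1,k)$ also matches Lemma~\ref{lemma: tw estimation BK(2k+1,k)}. But there is a genuine gap, and it sits exactly at the decisive step: the conversion from the edge bound to the claimed constant $\frac{3n}{4}\cdot\frac{\mu}{\Delta+2\mu}$ is asserted, not proved. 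You say that applying the edge inequality to $A$ and to $A\cup N(A)$, solving a quadratic, and ``optimising over the admissible balance ratio'' produces the stated denominator and the constant $3/4$ --- but if one actually executes this plan with the ingredients you allow yourself, it falls quantitatively short. For instance, applying the Fiedler bound to both sides of the partition $V=A\cup X\cup B$ and bounding all edges incident to $X$ by $|X|\Delta$ gives
\[
|X|\Delta \;\ge\; \frac{\mu}{n}\bigl(2|A||B| + |X|(n-|X|)\bigr),
\]
and the $2/3$-balance from Corollary~\ref{coro: tw and separator 2/3} only guarantees $|A||B|\ge \frac{2}{9}(n-|X|)^2$ (the extremal split is $(\frac13,\frac23)$, not $(\frac12,\frac12)$), which yields a bound of the shape $|X|\ge \frac{4\mu n}{9\Delta+4\mu}$; similarly, the two-set trick applied to a set $S$ with $n/4\le|S|\le n/2$ and its enlargement $S\cup N(S)$, using concavity of $z(n-z)$, yields $|N(S)|\,(\Delta+\mu)\ge \frac{3\mu n}{8}$, i.e.\ $|N(S)|\ge \frac{3n}{8}\cdot\frac{\mu}{\Delta+\mu}$. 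Both are strictly weaker than the proposition's $\frac{3n}{4}\cdot\frac{\mu}{\Delta+2\mu}$ for every $\Delta>0$, so no optimisation of the balance ratio within your framework can close the factor-of-two shortfall: the actual Chandran--Subramanian argument requires an additional device (a sharper balanced-separation lemma and a more careful case analysis) that your proposal neither states nor supplies.

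For context, the paper offers no proof of this proposition at all --- it is quoted from \cite{tw_lb_spectral_chandran2003spectral} --- so your high-level strategy (balanced separator plus spectral edge-isoperimetry) does mirror the known approach of the cited source. As a standalone proof, however, your text is incomplete at precisely the step where the constants $\frac{3}{4}$ and $\Delta+2\mu$ are born, and the sketch as written would, if completed mechanically, prove a weaker statement.
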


\vspace{.2cm}
\begin{proposition}[\cite{spectral_middle_cube_qiu2009spectrum}]\label{prop: spectral middle cube}
	The characteristic polynomial of $BK(2k+1,k)$ is
	\[
		\prod_{i=1}^{k+1}{(\lambda + i)}^{\binom{n}{k+1-i} - \binom{n}{k-i}}	{(\lambda - i)}^{\binom{n}{k+1-i} - \binom{n}{k-i}}.	
	\]
\end{proposition}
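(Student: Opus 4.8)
The plan is to exploit the bipartite structure of $BK(2k+1,k)$ and reduce the spectrum to the singular values of a single inclusion matrix. Write $n=2k+1$, so the two parts are $V_L=\binom{[n]}{k}$ and $V_R=\binom{[n]}{k+1}$, which have equal size $\binom{n}{k}$. Let $M$ be the $\binom{n}{k}\times\binom{n}{k+1}$ inclusion matrix with $M_{A,B}=1$ iff $A\subseteq B$. Then the adjacency matrix of $BK(n,k)$ is $\left(\begin{smallmatrix}0&M\\ M^{\top}&0\end{smallmatrix}\right)$, whose nonzero eigenvalues are exactly $\pm\sigma$ for each singular value $\sigma>0$ of $M$, each with multiplicity equal to the dimension of the corresponding singular space, together with a zero eigenvalue for every vector in $\ker M$ or $\ker M^{\top}$. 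Hence it suffices to determine the eigenvalues of $MM^{\top}$.

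First I would identify $MM^{\top}$ with a shifted Johnson adjacency matrix. For $k$-sets $A,A'$, the entry $(MM^{\top})_{A,A'}$ counts the $(k+1)$-sets containing both $A$ and $A'$: when $A=A'$ this is the number of $(k+1)$-supersets of $A$, namely $n-k=k+1$; when $|A\cap A'|=k-1$ there is exactly one such set, $A\cup A'$; and otherwise none. Since adjacency in $J(n,k)$ is precisely $|A\cap A'|=k-1$, this gives $MM^{\top}=(k+1)I+A(J(n,k))$. I would then invoke the classical spectrum of the Johnson graph: $J(n,k)$ has eigenvalues $\lambda_j=(k-j)(n-k-j)-j$ for $0\le j\le k$ with multiplicity $\binom{n}{j}-\binom{n}{j-1}$. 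Substituting $n=2k+1$ and factoring yields
\[
(k+1)+\lambda_j=(k+1-j)+(k-j)(k+1-j)=(k+1-j)^2,
\]
so the singular values of $M$ are $\sigma_j=k+1-j$, with multiplicity $\binom{n}{j}-\binom{n}{j-1}$.

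Finally I would reindex and account for multiplicities. Setting $i=k+1-j$ (so $i$ runs from $1$ to $k+1$ as $j$ runs from $k$ down to $0$), each singular value $i$ of $M$ contributes eigenvalues $+i$ and $-i$ of $BK(n,k)$, each with multiplicity $\binom{n}{k+1-i}-\binom{n}{k-i}$, which is exactly the asserted product. To confirm that no zero eigenvalue appears, note the multiplicities telescope, $\sum_{i=1}^{k+1}\bigl(\binom{n}{k+1-i}-\binom{n}{k-i}\bigr)=\binom{n}{k}$, so the $\pm i$ eigenvalues already exhaust all $2\binom{n}{k}=|V(BK(n,k))|$ vertices; equivalently, since $\binom{n}{k}=\binom{n}{k+1}$ for $n=2k+1$ the matrix $M$ is square and all $\sigma_j>0$, so $M$ is nonsingular and $\ker M=\{0\}$. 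The main technical point, and the only step that is not pure bookkeeping, is the identity $MM^{\top}=(k+1)I+A(J(n,k))$ together with the clean collapse $(k+1)+\lambda_j=(k+1-j)^2$ special to $n=2k+1$; everything else follows from the standard bipartite SVD correspondence and the known Johnson eigenvalues.
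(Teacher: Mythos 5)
Your proof is correct, but note that the paper itself offers nothing to compare against: Proposition~\ref{prop: spectral middle cube} is stated as a citation to the literature and never proved in the text. Your derivation is a clean, self-contained substitute. The key identity $MM^{\top}=(k+1)I+A(J(n,k))$ is right (a $k$-set has exactly $n-k=k+1$ supersets of size $k+1$, and two $k$-sets meeting in $k-1$ points have exactly one common $(k+1)$-superset, namely their union), the collapse $(k+1)+\lambda_j=(k+1-j)+(k-j)(k+1-j)=(k+1-j)^2$ at $n=2k+1$ is verified, and the telescoping $\sum_{i=1}^{k+1}\bigl(\tbinom{n}{k+1-i}-\tbinom{n}{k-i}\bigr)=\tbinom{n}{k}$ correctly rules out zero eigenvalues, which is also forced by your observation that $M$ is square with all singular values positive. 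For comparison, the standard route in the cited literature is different in flavor: $BK(2k+1,k)$ is the bipartite double cover of the odd graph $O_{k+1}=K(2k+1,k)$ (map each $(k+1)$-set to its complement), so its spectrum is $\pm$ the odd-graph eigenvalues $(-1)^{j}(k+1-j)$ with the same multiplicities $\tbinom{n}{j}-\tbinom{n}{j-1}$. That argument imports the Kneser/odd-graph spectrum, whereas yours imports the Johnson spectrum via the singular value decomposition of the inclusion matrix; your version fits this paper particularly well, since $J(2k+1,k)$ already plays a central role in Section~\ref{section: treewidth of bipartite Kneser and Johnson}, and the miraculous perfect square $(k+1-j)^2$ makes transparent why the middle-levels spectrum consists of the consecutive integers $\pm1,\ldots,\pm(k+1)$. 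One cosmetic remark: the proposition's statement writes $n$ in the exponents while the graph is $BK(2k+1,k)$; your implicit convention $n=2k+1$ is the intended reading.
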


\vspace{.2cm}
\begin{lemma}\label{lemma: tw estimation BK(2k+1,k)}We have
	\[
	tw(BK(2k+1,k)) \ge 	\left\lfloor \frac{3}{2} \binom{2k+1}{k} \frac{1}{k+3} \right\rfloor - 1.	
	\]
\end{lemma}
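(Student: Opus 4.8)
The plan is to apply the spectral lower bound of Proposition~\ref{prop: tw lb spectral} directly to $G = BK(2k+1,k)$, so the whole task reduces to reading off three quantities: the regularity degree, the number of vertices, and the second-largest eigenvalue. First I would record the elementary structural facts. Writing $n = 2k+1$, the two parts are $\binom{[n]}{k}$ and $\binom{[n]}{k+1}$; since each $k$-set lies in exactly $n-k = k+1$ sets of size $k+1$, and each $(k+1)$-set contains exactly $k+1$ sets of size $k$, the graph is $(k+1)$-regular, so the degree playing the role of ``$k$'' in Proposition~\ref{prop: tw lb spectral} is $k+1$ and $\Delta(G) = k+1$. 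Its order is $\binom{n}{k} + \binom{n}{k+1} = 2\binom{2k+1}{k}$.

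Next I would extract the spectrum from Proposition~\ref{prop: spectral middle cube}: the eigenvalues of $BK(2k+1,k)$ are exactly $\pm i$ for $i = 1, \ldots, k+1$, each $\pm i$ occurring with multiplicity $\binom{n}{k+1-i} - \binom{n}{k-i}$. The top eigenvalue is $k+1$, with multiplicity $\binom{n}{0} - \binom{n}{-1} = 1$, consistent with connectedness and $(k+1)$-regularity. The largest value strictly below it comes from $i = k$, whose multiplicity $\binom{n}{1} - \binom{n}{0} = 2k$ is positive; since all eigenvalues are integers in $\{\pm 1, \ldots, \pm(k+1)\}$, none lies strictly between $k$ and $k+1$. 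Hence the second-largest eigenvalue is $\lambda(G) = k$, and therefore $\mu(G) = (k+1) - k = 1$.

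Finally I would substitute these values into Proposition~\ref{prop: tw lb spectral}. With order $2\binom{2k+1}{k}$, $\mu(G) = 1$, and $\Delta(G) + 2\mu(G) = (k+1) + 2 = k+3$, the bound reads
\[
tw(G) \ge \left\lfloor \frac{3 \cdot 2\binom{2k+1}{k}}{4} \cdot \frac{1}{k+3} \right\rfloor - 1 = \left\lfloor \frac{3}{2}\binom{2k+1}{k}\frac{1}{k+3} \right\rfloor - 1,
\]
which is precisely the claimed inequality. There is essentially no serious obstacle: the computation is mechanical once the spectrum is in hand, and the only point demanding care is correctly identifying the second-largest eigenvalue from the characteristic polynomial — in particular verifying that the multiplicity attached to $i = k$ is nonzero, so that $k$ genuinely appears in the spectrum, and that the only eigenvalue exceeding $k$ is the trivial one $k+1$.
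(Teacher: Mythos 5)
Your proposal is correct and follows exactly the paper's route: the paper derives this lemma directly from Proposition~\ref{prop: tw lb spectral} and Proposition~\ref{prop: spectral middle cube}, and your computation of the regularity $k+1$, the order $2\binom{2k+1}{k}$, and the second-largest eigenvalue $\lambda(G)=k$ (hence $\mu(G)=1$ and $\Delta(G)+2\mu(G)=k+3$) matches the intended substitution. In fact your write-up is more detailed than the paper's, which omits the verification that the multiplicity $2k$ at eigenvalue $k$ is nonzero.
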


Lemma~\ref{lemma: tw estimation BK(2k+1,k)} can be derived from Propositions~\ref{prop: tw lb spectral} and~\ref{prop: spectral middle cube}.
Theorem~\ref{thm: tw BK J uplow} can be derived by Lemmas~\ref{lemma: treewidth relationship between bipartite Kneser graph and Johnson graph},~\ref{lemma: upper bound treewidth of Johnson graph} and~\ref{lemma: tw estimation BK(2k+1,k)}.

\section{Treewidth of generalized Petersen graph}\label{section: tw generalized Petersen graph}

In this section, we determine the treewidth of generalized Petersen graph.
The vertex set and edge set of generalized Petersen graph $G_{n,k}$ are
\begin{equation*}	
\begin{aligned}
	V(G_{n,k})&=\{v_1,\hdots,v_n,u_1,\hdots,u_n\},\\ E(G_{n,k})&={\{v_i u_i\}} \cup {\{v_i v_{i+1}\}} \cup {\{u_i u_{i+k}\}}, i=1,2,\hdots,n,
\end{aligned}
\end{equation*}
where subscripts are to be read modulo $n$ and $k < n/2$.
Let $G$ be a graph and $X$ and $Y$ are two connected subgraphs of $G$.
We say $X$ \textit{touches} $Y$ when $V(X) \cap V(Y) \neq \emptyset$ or there exists an edge between $X$ and $Y$.
A \textit{bramble} of  $G$ is a family of connected subgraphs of $G$ that all touch each other.
Let $S$ be a subset of $V(G)$. $S$ is said to be a \textit{hitting set} of bramble $B$ if $S$ has nonempty intersection with each of the subgraphs in $B$.
The order of a bramble is the smallest size of a hitting set.
Brambles may be used to characterize the treewidth of a given graph.

\vspace{.2cm}
\begin{proposition}[\cite{bramble_seymour1993graph}]\label{prop: bramble}
	Let $G$ be a graph. Then $tw(G) \ge k$ if and only if $G$ contains a bramble of order at least $k+1$.
\end{proposition}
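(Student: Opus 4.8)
The plan is to prove the two implications of the equivalence separately. The implication ``a bramble of order $\ge k+1$ forces $tw(G)\ge k$'' is a short Helly-type argument, whereas the reverse implication ``$tw(G)\ge k$ produces a bramble of order $\ge k+1$'' is the substantial part, which I would treat through its contrapositive by constructing a narrow tree-decomposition.

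For the direction ``bramble $\Rightarrow$ large treewidth'', let $\mathcal{B}$ be a bramble of order $\ge k+1$ and let $(X,T)$ with $X=\{X_i\mid i\in I\}$ be an arbitrary tree-decomposition of $G$. For each $B\in\mathcal{B}$ put $I_B=\{i\in I\mid X_i\cap V(B)\neq\emptyset\}$. First I would record two standard facts. (i) Since $B$ is connected, $I_B$ induces a subtree of $T$: by the third axiom of Definition~\ref{def: treewidth} each set $\{i\mid v\in X_i\}$ is a subtree, by the second axiom adjacent vertices of $B$ have intersecting such subtrees, and connectivity of $B$ glues them into one subtree $I_B$. (ii) If $B,B'\in\mathcal{B}$ touch, then $I_B\cap I_{B'}\neq\emptyset$, because a shared vertex lies in a common bag, while an edge between $B$ and $B'$ is covered by a single bag via the second axiom. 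As all members of a bramble pairwise touch, the subtrees $\{I_B\}_{B\in\mathcal{B}}$ pairwise intersect, so by the Helly property of subtrees of a tree they have a common node $i^\ast$. Then $X_{i^\ast}$ meets every $B\in\mathcal{B}$, i.e.\ $X_{i^\ast}$ is a hitting set of $\mathcal{B}$, whence $|X_{i^\ast}|\ge k+1$ and the width of $(X,T)$ is at least $k$. Since the decomposition was arbitrary, $tw(G)\ge k$.

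For the converse I would prove the contrapositive: if $G$ has no bramble of order $\ge k+1$, i.e.\ every bramble has order $\le k$, then $G$ admits a tree-decomposition of width $\le k-1$. I would build one recursively by repeatedly splitting $G$ along small separators. Concretely, I would process ``pieces'' $(S,C)$, where $S\subseteq V(G)$ with $|S|\le k$ is a separator already placed in a bag and $C$ is a component of $G-S$ (so $N(C)\subseteq S$); a piece is refined by choosing a set $S'$ with $N(C)\subseteq S'\subseteq C\cup N(C)$ and $|S'|\le k$ that cuts $C$ into \emph{strictly} smaller components, attaching the bag $S'$ adjacent to the bag of $S$ and recursing on each resulting smaller piece. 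Iterating until every piece is exhausted yields a tree-decomposition all of whose bags have size $\le k$, hence of width $\le k-1$; checking the three axioms of Definition~\ref{def: treewidth} for the assembled object is then routine bookkeeping, and termination follows from the strict decrease.

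The main obstacle is the crux existence lemma driving this recursion: \emph{whenever a piece is nonempty, a refining separator $S'$ as above exists.} I would prove this by contradiction. If some piece admitted no separator of size $\le k$ splitting it into strictly smaller parts, then the connected subgraphs of $C$ that survive every attempt to cut them off by $\le k$ vertices pairwise touch and form a bramble, while no set of $\le k$ vertices can be a hitting set for it; this yields a bramble of order $\ge k+1$, contradicting the hypothesis. Equivalently I would phrase the crux in the dual language of havens of order $k+1$ (a consistent choice of a component of $G-Z$ for every $Z$ with $|Z|\le k$), which are interchangeable with brambles of order $\ge k+1$ and make the consistency fueling the recursion transparent. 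This is precisely the content of the Seymour–Thomas duality and is where all the real difficulty lies; the rest of the argument is the bookkeeping described above.
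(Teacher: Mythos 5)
The paper does not prove this proposition at all --- it is imported verbatim from Seymour and Thomas via the citation --- so your attempt stands on its own. Your first direction (bramble of order $\ge k+1$ forces $tw(G)\ge k$) is complete and correct: the sets $I_B$ are subtrees of $T$ by the third axiom plus connectivity of $B$, touching members give pairwise intersecting subtrees, the Helly property of subtrees yields a common node $i^\ast$, and $X_{i^\ast}$ is then a hitting set of size $\ge k+1$, so every tree-decomposition has width $\ge k$. This is the standard argument and needs no repair.

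The converse direction, however, has a genuine gap at exactly the point you flag. Your crux existence lemma is supported only by the one-sentence claim that, if a piece $(S,C)$ admits no refining separator of size $\le k$, then ``the connected subgraphs of $C$ that survive every attempt to cut them off by $\le k$ vertices pairwise touch and form a bramble'' with no hitting set of size $\le k$. Neither half of that claim is evident, and the first half is not even well-posed: two connected subgraphs of $C$ can each be unsplittable in your sense while being separated \emph{from each other} by a set of $\le k$ vertices, so pairwise touching fails for the naive family. What one actually needs is a consistent haven --- a choice, for \emph{every} $Z$ with $|Z|\le k$ in the whole graph, of a component $\beta(Z)$ of $G-Z$ such that $\beta(Z)$ and $\beta(Z')$ always touch --- and extracting such a globally consistent choice from the local failure of a single piece to split is precisely the hard content of the Seymour--Thomas theorem. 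Your closing admission that the crux ``is precisely the content of the Seymour--Thomas duality'' makes the argument circular as a proof of the proposition: the recursion and bookkeeping you describe are the routine part, while the duality you invoke is the statement being proved. To close the gap one must run the genuine induction, e.g.\ the standard strengthening that if $G$ has no bramble of order $>k$ then for every bramble $\mathcal{B}$ of $G$ there is a $\mathcal{B}$-admissible tree-decomposition of width $<k$, proved by induction over brambles using a minimum-size hitting set $X$ of $\mathcal{B}$ and a careful case analysis on the components of $G-X$; nothing shorter is known, and your sketch does not substitute for it.
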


With the help of Proposition~\ref{prop: bramble}, we now can give the proof of Theorem~\ref{thm: tw generalized Petersen}.

\noindent
\textit{Proof of Theorem~\ref{thm: tw generalized Petersen}: }
	First, we intend to prove the lower bound.
	Construct a bramble $B = \{B_i\}$ of $G_{n,k}$ as
\begin{equation*}
\begin{aligned}
	V_i&=\{v_i,v_{i+1},\hdots,v_{i+t},u_{i+t},u_{i+t+k},u_{i+t+2k},\hdots,u_{i+t+tk}\}, \\
	B_i&=G_{n,k}[V_i],
\end{aligned}
\end{equation*}
where $t=\lceil \frac{n}{2k+2}\rceil, i=1,2,\hdots,n$.
Then we have $|V_i|=2t+2$ and $B_i$ is connected.
For each pair $i$ and $j$, we intend to prove that $B_i$ touches $B_j$.
Without loss of generality, assume $1\leq i<j \leq n$.

\begin{itemize}
	\item If $j\leq i+t$, then $v_j\in V_i\cap V_j$.
	\item If $i+t<j\leq i+t+tk$, let $r$ be the minimum integer in $\{0,1,\hdots,k-1\}$ such that $j+r\equiv i+t$~(mod~$k$). Noticing that $t \ge k$ when $n \geq 8{(2k+2)}^{2}$. Hence, $v_{j+r}\in V_j, u_{j+r}\in V_i$, and $u_{j+r}v_{j+r}\in E(G_{n,k})$.
	\item If $i+t+tk<j<n+i-t$, then $j+t<i+n<j+n-t-tk\leq j+t+tk$. The last inequality comes from  $2kt+2t\geq n$ by $t=\lceil \frac{n}{2k+2}\rceil$ and then we have $t+tk\geq n-t-tk$ which implies $j+t<i+n\leq j+t+tk$. The next proof is the same as that in the second situation.
	\item If $n+i-t\leq j\leq n$, the proof is the same as that in the first situation.
\end{itemize}

Then $B$ is a bramble.
Let $S$ be a hitting set of $B$. We construct a hypergraph $H$ with vertex set $V(H)=V(G_{n,k})$ and hyperedge set ${\{V_i\}}_{1 \le i \le n}$.

\begin{definition}[Transversal]
	Let $H$ be a hypergraph on a set $X$ with edges $E_1,\ldots,E_m$. 
	A set $T \subseteq X$ is called a transversal of $H$ if $T$ intersects every edge of $H$, that is,
	\[
	T \cap E_i \neq \emptyset, \forall i=1,2,\ldots,m.
	\]
\end{definition}

Then $S$ is a transversal of $H$ and thus, $\min |S| = \tau(H)$ where $\tau(H)$ is the transversal number of $H$.
Since $\tau(H) \ge  \max \limits_{H' \subseteq H} \frac{m(H')}{\Delta(H')}$~\cite{berge1984hypergraphs}, where $m(H')$ is the number of edges in $H'$ and $\Delta(H')$ is the maximum degree of $H'$, we have that the order of $B$ is
\[
	\min |S| \ge \frac{m(H)}{\Delta(H)} = \frac{n}{t+1}.
\]

% Then the order of $B$ is
% \[
% 	\min ~|S|\geq \frac{2n}{|V_i|}=\frac{2n}{2t+2}=\frac{n}{t+1}.
% \]

Since  $t=\lceil \frac{n}{2k+2}\rceil$, we have $t-1\leq \frac{n}{2k+2}$ and
\[
	2k+2-\frac{n}{t+1}\leq \frac{n}{t-1}-\frac{n}{t+1}=\frac{2n}{t^2-1}< 1.
\]

Therefore, the order of $B$ is no less than $2k+2$.
From Proposition~\ref{prop: bramble}, we can derive that $tw(G_{n,k})\geq 2k+1$.

The upper bound can be proved via construction.
Construct a path-decomposition of $G_{n,k}$ as following~(see Figure~\ref{fig: Petersen 1} and Figure~\ref{fig: Petersen 2}).

\begin{figure}[t]
	\centering         %使图片居中放置
	\includegraphics[width=\linewidth]{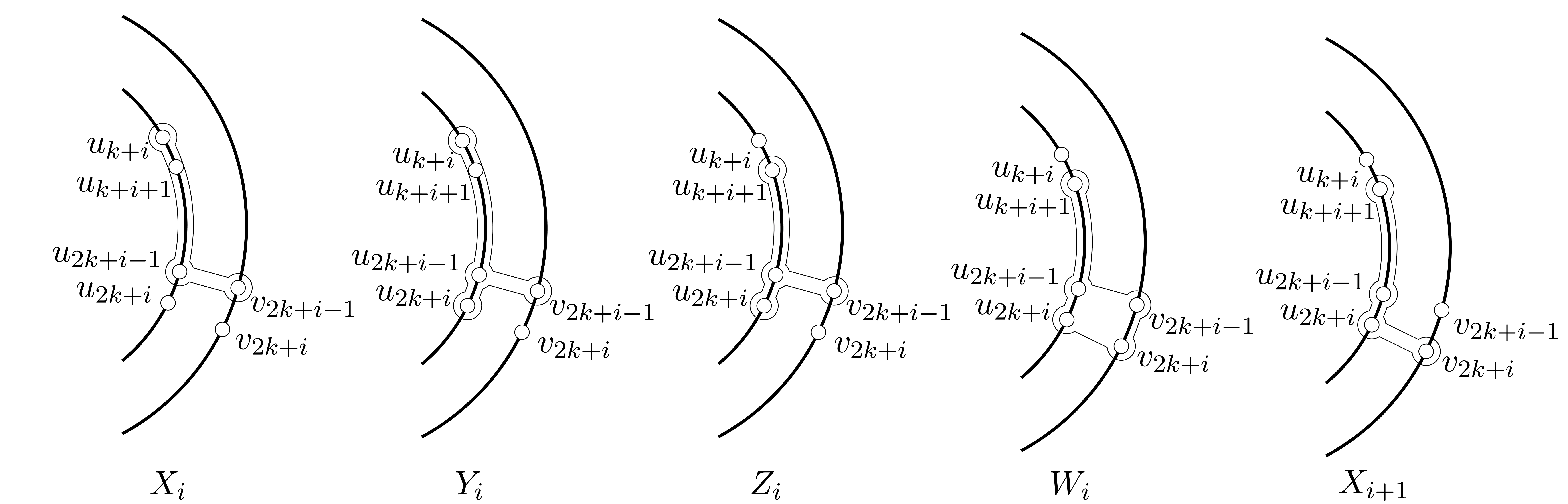}
	\caption{The vertex sets $X_i, Y_i, Z_i$ and $W_i$.}\label{fig: Petersen 1}
\end{figure}

\begin{figure}[t]
	\centering         %使图片居中放置
	\includegraphics[width=0.5\linewidth]{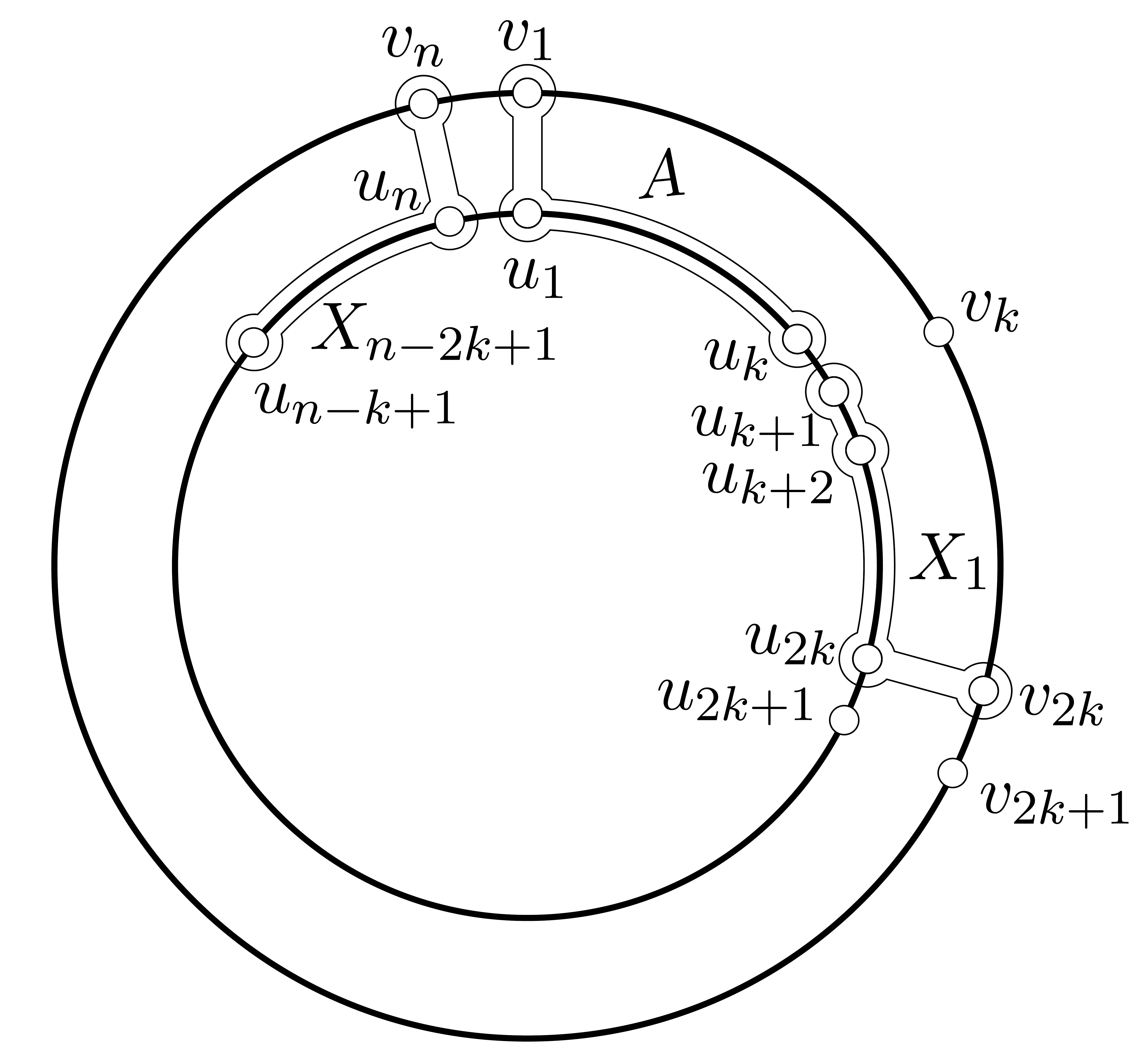}
	\caption{The defined sets on generalized Petersen graph.}\label{fig: Petersen 2}
\end{figure}

\begin{enumerate}[Step 1.]
	\item Let $A = \{v_1, u_1, \ldots, u_k\}$, $B_1=\{v_1, \ldots, v_{k}\}$ and $B_2=\{v_{k}, \ldots, v_{2k}\}$.
	\item Let $X_1 = \{u_{k+1}, \ldots, u_{2k}, v_{2k}\}$.
	\item Recursively define $Y_i = X_i \cup \{u_{2k+i}\}$, $Z_i = Y_i - \{u_{k+i}\} $, $W_i = Z_i \cup \{v_{2k+i}\}$ and $X_{i+1} = W_i - \{v_{2k+i-1}\}$ for $1 \le i \le n-2k$.
	It is easy to verify that $X_i = \{ u_{k+i}, \ldots, u_{2k+i-1}, v_{2k+i-1}\}$.
	\item Define $X_{n-2k+1} = W_{n-2k} - \{v_{n-1}\}$.
	\item Define a path decomposition $\mathcal{P}_1$ of $G_{n,k} - A$ by successively connect
	\[
		(B_1,B_2, X_1, Y_1, Z_1, W_1, X_2, \ldots, W_{n-2k}, X_{n-2k+1}).
	\]
	\item Add each vertex in $A$ to all bags of $\mathcal{P}_1$, then we obtain a path decomposition $\mathcal{P}_2$ of $G_{n,k}$.
\end{enumerate}

% \begin{equation*}
% \begin{aligned}
% 	X_1&=\{v_1,u_1,u_2,\hdots,u_{2k-1},v_{2k-1}\}\\
% 	Y_1&=X_1\cup \{v_{2k}\}\\
% 	Z_1&=Y_1\setminus \{u_{2k}\}\\
% 	W_1&=Z_1\cup \{u_{2k}\}\\
% 	X_2&=W_1\setminus \{u_k\}=\{v_1,u_1,\hdots,u_{k-1},u_{k+1}, \hdots,u_{2k},v_{2k}\}\\
% 	Y_i&=X_i\cup\{v_{2k-1+i}\}\\
% 	Z_i&=Y_i\setminus\{v_{2k-2+i}\}\\
% 	W_i&=Z_i\cup \{u_{2k-1+i}\}\\
% 	X_{i+1}&=W_i\setminus \{u_{k-1+i}\}\\
% 	\vdots\\
% 	X_{n-2k+2}&=\{v_1,u_1,\hdots,u_{k-1},u_{n-k+1}, u_{n-k+2},\hdots,u_{n},v_{n}\}\\
% 	A_1&=\{v_1,v_2,v_{2k-1},u_1,u_2,\hdots,u_{2k-1}\}\\
% 	A_2&=\{v_2,v_3,v_{2k-1},u_1,u_2,\hdots,u_{2k-1}\}\\
% 	\vdots\\
% 	A_{2k-3}&=\{v_{2k-3},v_{2k-2},v_{2k-1},u_1,u_2,\hdots,u_{2k-1}\}.
% \end{aligned}
% \end{equation*}

Here we explain why we construct the path-decomposition in this way. For a cycle $v_{1}v_{2}\ldots v_{k}$, the way to build its path-decomposition is to first delete any vertex, say $v_1$, then the left part is a path $v_2\ldots v_k$. Then build a path-decomposition like $\{v_2v_3\}-\{v_3v_4\}-\ldots-\{v_{k-1}v_k\}$ and then add $v_1$ to all bags. Here the generalized Petersen graph behaves like a ``double-cycle''. We first delete $A$, and the rest part behaves like a ``path''. Then the sequence of $X_i,Y_i,Z_i,W_i,X_{i+1}$ is like what we do in the path-decomposition of a cycle. Once we move one vertex so that the width do not increase too much. $B_1$ and $B_2$ are designed to cover the left vertices. Finally we add $A$ to all bags just like for the cycle we add $v_1$ back.

It is easy to verify $\mathcal{P}_1$ is a path-decomposition of $G_{n,k}-A$ by checking the three properties which implying that $\mathcal{P}_2$ is a path-decomposition of $G_{n,k}$.

Since $|X_i| = |Z_i| = k+1, |Y_i| = |W_i| = k+2, |B_1| = k, |B_2| = k + 1$ and $|A| = k+1$, the width of $\mathcal{P}_2$ is $2k+3$ and, hence, $tw(G_{n,k}) \le pw(G_{n,k}) \le 2k+2$.
\hfill$\square$

\section*{Acknowledgement}
The authors would like to thank the referee for their careful reading and valuable comments which help to improve the presentation of this paper.
M. Cao is supported by the National Natural Science Foundation of China (Grant 12301431), M. Lu is supported by the National Natural Science Foundation of China (Grant 12171272).
% This research was supported by the National Natural Science Foundation of China (Grant 12171272 \&12161141003).

% ------------ bib part ---------------------
\bibliography{ref.bib}
\bibliographystyle{wyc3}

% ------------------------------------------------------------------
% -------------------------- Appendix ---------------------------
% ------------------------------------------------------------------
\section*{Appendix A:\@ Proof of Lemma~\ref{lemma: guessing r^(t,n)_k1,k2}}

Before the proof, we need several definitions. We say the parameter tuple $(t,n,k,s)$ is \textit{valid} if $t \ge 2s, n \ge 1, 1 \le t < n - 1$ and $k+t-2s\le n$, that correspons to the non-trivial situation of Lemma~\ref{lemma: guessing r^(t,n)_k1,k2}.
The parameter tuple of matrix in the formula of $r^{(i)}_{t,n,k,t-2s}~( i=1,2,3,4)$ is shown as Table~\ref{tab: parameter tuple}.

\begin{table}[htbp]
	\renewcommand{\arraystretch}{1.5}
	\centering
	\begin{tabular}{cccccc}
		\hline % chktex 44
		$r^{(i)}$-term	& matrix & $t'$ & $n'$ & $k'$ & $s'$ \\ 
		\hline % chktex 44
		$r^{(1)}_{t,n,k,t-2s}$& $r(M^{(t,n-1)}_{k-1,k+t-2s-1})$ & $t$ & $n-1$ & $k-1$ & $s$ \\
		$r^{(2)}_{t,n,k,t-2s}$& $r(M^{(t-1,n-1)}_{k-1,k+t-2s})$ & $t-1$ & $n-1$ & $k-1$ & $s-1$ \\
		$r^{(3)}_{t,n,k,t-2s}$& $r(M^{(t-1,n-1)}_{k,k+t-2s})$ & $t-1$ & $n-1$ & $k $& $s$ \\
		$r^{(4)}_{t,n,k,t-2s}$& $r(M^{(t,n-1)}_{k,k+t-2s})$ & $t$ & $n-1$ & $k$ & $s$ \\
		\hline % chktex 44
	\end{tabular}
	\caption{Parameter tuple of the matrix in the Eq~\ref{eq: lemma: guessing r^(t,n)_k1,k2} of $r^{(i)}_{t,n,k,t-2s}$. }\label{tab: parameter tuple}
\end{table}

First, it is easy to verify that $D_{t,n,k,s} = A^{(1)}_{t,n,k,s} + A^{(2)}_{t,n,k,s} + C_{t,n,k,s}$ holds when $k-s=0$ and, $D_{t,n,k,s} = B_{t,n,k,s} + C_{t,n,k,s}$ when $k-s=n-t$ and $A^{(1)}_{t,n,k,s} + A^{(2)}_{t,n,k,s} = B_{t,n,k,s}$ hold when $ k-s = \lfloor (n-t)/2 \rfloor$.
Hence, in those situations, we can calculate $r(M^{(t,n)}_{k,k+t-2s})$ in both ways.

In the following, we prove the Lemma by induction on $t+n+k+s$. From Table~\ref{tab: parameter tuple}, when we calculate $\{r^{(i)}_{t,n,k,t-2s}\}_{1 \le i \le 4}$, the term always obtains a smaller $t'+n'+k'+s'$, which means we can use Eq~\ref{eq: lemma: guessing r^(t,n)_k1,k2} to calculate those terms by induction.

\vspace{1em}
\noindent
\textbf{Step 1: verify Lemma~\ref{lemma: guessing r^(t,n)_k1,k2} when either $t \ge n-1$, or $t=1$.}
\vspace{1em}

When $t \ge n-1$, it is trivial to verify $M^{(t,n)}_{k,k+t-2s}$ is an all-one matrix and, thus, $r(M^{(t,n)}_{k,k+t-2s}) = \binom{n}{k} + \binom{n}{k+t-2s} -1 $ from definition.

When $t=1$, then $s=0$.
% and $r(M^{(t,n)}_{k,k+t-2s}) = r(M^{(1,n)}_{k,k+1})$.
In this case $r^{(2)}_{1,n,k,1} = -\infty$ and $r^{(3)}_{1,n,k,1} = -\infty$ and,
hence, $r(M_{k,k+1}^{(1,n)}) = \max \{ r^{(1)}_{1,n,k,1}, r^{(4)}_{1,n,k,1} \}$.
We can prove Eq~\ref{eq: lemma: guessing r^(t,n)_k1,k2} holds for $t=1$ by induction on $n$.
The proof is omitted.

Then, we give a proof of Lemma~\ref{lemma: guessing r^(t,n)_k1,k2} via Eq~\ref{eq: rank recursion formula} by induction.
Suppose Eq~\ref{eq: lemma: guessing r^(t,n)_k1,k2} holds for $t+n+k+s \le N-1$.
Now consider when $t+n+k+s = N$.
In the following steps, we only need to consider the case $t \ge 2$ and $ t < n-1$.

\vspace{1em}
\noindent
\textbf{Step 2: verify Eq~\ref{eq: lemma: guessing r^(t,n)_k1,k2} when $k=0$ and $1 < t < n-1$.}
\vspace{1em}

% k = 0的情况
If $k=0$, then $k+t-2s<n$ and $r^{(1)}_{t,n,k,t-2s} = -\infty$ and $r^{(2)}_{t,n,k,t-2s} = -\infty$.

\textbf{Case 2.1: If $t = 2s$,} then $r(M^{(t,n)}_{k, k+t-2s}) = r(M^{(t,n)}_{0,0}) = 1$ from definition.
% Notice that, in this case, $D_{t,n,k,s} = 0$ and $\binom{n}{k} = 1$.
% It shows that Eq~\ref{eq: lemma: guessing r^(t,n)_k1,k2} holds in this case.
It is easy to verify Eq~\ref{eq: lemma: guessing r^(t,n)_k1,k2} holds in this case.

% \textbf{Case 1.2: If $k+t-2s = n$,} then $r(M^{(t,n)}_{k, k+t-2s}) = r(M^{(t,n)}_{0,n}) = 1$ from definition which agrees the result of Lemma~\ref{lemma: guessing r^(t,n)_k1,k2}.

\textbf{Case 2.2: If $t > 2s$,}
then $(t-1,n-1,k,s)$ are valid and, hence, $r^{(3)}_{t,n,k,t-2s} = r(M^{(t-1,n-1)}_{k, k+(t-1) - 2s})$ can be calculated via Eq~\ref{eq: lemma: guessing r^(t,n)_k1,k2} by induction.

Since $k-s \le 0$, we have
\begin{equation*}
\begin{aligned}
	r^{(3)}_{t,n,k,t-2s} &  = r(M^{(t-1,n-1)}_{k, k+(t-1) - 2s}) \\
	&= \binom{n-1}{k} + D_{t-1,n-1,k,s} \\
	% &= \binom{n-1}{k} + \sum_{m=k+t-2s+1}^{n} \binom{m-2}{k+t-2s-2} \\
	& \le \binom{n}{k} + \sum_{m=k+t-2s+1}^{n} \binom{m-1}{k+t-2s-1}  = \binom{n}{k} + D_{t,k,n,s},
\end{aligned}
\end{equation*}
where the second equality comes from $k-s \le 0$ and Eq~\ref{eq: lemma: guessing r^(t,n)_k1,k2} by induction.

\textbf{Subcase 2.2.1: If $t = n-2$}, then
\begin{equation*}
\begin{aligned}
	r^{(4)}_{t,n,k,t-2s} &  = r(M^{(t,n-1)}_{k, k+t-2s}) + \binom{n-1}{k+t-2s-1}\\
	& = \binom{n-1}{k} + \binom{n-1}{k+t-2s} - 1 + \binom{n-1}{k+t-2s-1} = \binom{n}{n-2s-2} \\
	% & = 1 + \binom{n}{n-2s-2} - \binom{n-2s-2}{n-2s-2} \\
	% & = 1 + \sum_{m=n-1-2s}^{n} \binom{m-1}{n-2s-3} \\
	% & = \binom{n}{k} + \sum_{m=k+t-2s+1}^{n} \binom{m-1}{k+t-2s-1} \\
	& = \binom{n}{k} + D_{t,n,k,s},
\end{aligned}
\end{equation*}
where the second equality comes from the trivial situation of Lemma~\ref{lemma: guessing r^(t,n)_k1,k2} which we have proved before
Therefore, we have verified Eq~\ref{eq: lemma: guessing r^(t,n)_k1,k2} in this subcase.

\textbf{Subcase 2.2.2: If $t < n-2$},
then $(t,n-1,k,s)$ are valid and, hence, $r^{(4)}_{t,n,k,t-2s} = r(M^{(t,n-1)}_{k, k+t-2s})$ can be calculated via Eq~\ref{eq: lemma: guessing r^(t,n)_k1,k2} by induction.
\begin{equation*}
\begin{aligned}
	r^{(4)}_{t,n,k,t-2s} &  = r(M^{(t,n-1)}_{k, k+t-2s}) + \binom{n-1}{k+t-2s-1}\\
	& = \binom{n-1}{k} + D_{t,n-1,k,s} + \binom{n-1}{k+t-2s-1} \\
	% & = \binom{n-1}{k} + \sum_{m=k+t-2s+1}^{n-1} \binom{m-2}{k+t-2s-1} + \binom{n-1}{k+t-2s-1}\\
	% & = \binom{n}{k} + \sum_{m=k+t-2s+1}^{n} \binom{m-1}{k+t-2s-1} \\
	& = \binom{n}{k} + D_{t,n,k,s},
\end{aligned}
\end{equation*}
where the second equality comes from Step~1 and the third equality comes from $k=0$.

From above, combining Eq~\ref{eq: rank recursion formula}, we can verify that Eq~\ref{eq: lemma: guessing r^(t,n)_k1,k2} holds when $k=0$.
% Till now, we have proved Eq~\ref{eq: lemma: guessing r^(t,n)_k1,k2} holds when $k=0$.

\vspace{1em}
\noindent
\textbf{Step 3: verify Eq~\ref{eq: lemma: guessing r^(t,n)_k1,k2} when $k+t-2s =n$ and $1 < t < n-1$.}
\vspace{1em}

% We can do it in a similar way.
% First assume $k>0$ and $t> 2s$.
In this case, $r^{(2)}_{t,n,k,t-2s} = -\infty$ and $r^{(4)}_{t,n,k,t-2s} = -\infty$.

\textbf{Case 3.1: If $t = 2s$}, then $k = n$ and
$r(M^{(t,n)}_{k, k+t-2s}) = r(M^{(t,n)}_{n,n}) = 1$ from definition.
% Notice that, in this situation, $D_{t,n,k,s} = 0$ and $\binom{n}{k} = 1$.
% It shows that Eq~\ref{eq: lemma: guessing r^(t,n)_k1,k2} holds in this case.
It is easy to verify Eq~\ref{eq: lemma: guessing r^(t,n)_k1,k2} holds in this case.

\textbf{Case 3.2: If $t < 2s$}, then $(t-1,n-1,k,s)$ is valid and, hence, $r_{t,n,k,p}^{(3)} =r(M_{k, k+p-1}^{(t-1,n-1)})$ can be calculated via Eq~\ref{eq: lemma: guessing r^(t,n)_k1,k2} by induction.
Since $k-s \ge n-t$, we have
\begin{equation*}
\begin{aligned}
	r^{(3)}_{t,n,k,t-2s} &  = r(M^{(t-1,n-1)}_{k, k+(t-1) - 2s}) \\
	&= \binom{n-1}{k} + D_{t-1,n-1,k,s} \\
	&= \binom{n-1}{k}  \\
	% &= \binom{n-1}{k} + \sum_{m=k+t-2s+1}^{n} \binom{m-2}{k+t-2s-2} \\
	& \le \binom{n}{k} = \binom{n}{k} + D_{t,k,n,s},
\end{aligned}
\end{equation*}
where the second equality comes from Eq~\ref{eq: lemma: guessing r^(t,n)_k1,k2} by induction.

% Since $k+t-2s =n$, we have $(k-1) + t-2s = (n-1)$, $k-s\ge n-t$ and $(k-1) -s \ge (n-1)-t$. Thus, we obtain $D_{t,n-1,k-1,s} = 0$ and 

% \begin{equation*}
% \begin{aligned}
% 	r^{(1)}_{t,n,k,t-2s} &  = r(M^{(t,n-1)}_{k-1,k-1+t-2s}) + \binom{n-1}{k} \\
% 	& = \binom{n-1}{k-1} + \binom{n-1}{k} + D_{t,n-1,k-1,s}\\
% 	& = \binom{n}{k} = \binom{n}{k} + D_{t,n,k,s},
% \end{aligned}
% \end{equation*}

\textbf{Subcase 3.2.1: If $ t =n-2$}, then
\begin{equation*}
	\begin{aligned}
		r^{(1)}_{t,n,k,t-2s} &  = r(M^{(t,n-1)}_{k-1,k-1+t-2s}) + \binom{n-1}{k} \\
		& = \binom{n-1}{k} + \binom{n-1}{k-1} + \binom{n-1}{k-1+t-2s} - 1 \\
		& = \binom{n}{k} = \binom{n}{k} + D_{t,n,k,s},
	\end{aligned}
	\end{equation*}
where the second equality comes from Step 1 and the following equalities come from $k+t-2s=n$.

\textbf{Subcase 3.2.2: If $ t < n-2$}, then $(t,n-1,k-1,s)$ is valid and,
hence, $r^{(1)}_{t,n,k,t-2s} = r(M^{(t,n-1)}_{k-1,k-1+t-2s}) + \binom{n-1}{k}$ can be calculated via Eq~\ref{eq: lemma: guessing r^(t,n)_k1,k2} by induction. That is
\begin{equation*}
\begin{aligned}
	r^{(1)}_{t,n,k,t-2s} &  = r(M^{(t,n-1)}_{k-1,k-1+t-2s}) + \binom{n-1}{k} \\
	& = \binom{n-1}{k-1} + D_{t,n-1,k-1,s} + \binom{n-1}{k}\\
	% & = \binom{n}{k} + \sum_{m=k+t-2s}^{n-1} \binom{m-1}{k+t-2s-2} = \binom{n}{k} \\
	% & = \binom{n}{k} + \sum_{m=k+t-2s+1}^{n} \binom{m-1}{k+t-2s-1} \\
	& = \binom{n}{k} + D_{t,n,k,s},
\end{aligned}
\end{equation*}
where the second equality comes from Eq~\ref{eq: lemma: guessing r^(t,n)_k1,k2} by induction, and the third equality comes from $k+t-2s=n$.

From above, combining Eq~\ref{eq: rank recursion formula}, we can verify that Eq~\ref{eq: lemma: guessing r^(t,n)_k1,k2} holds when $k+t-2s = n$.

% From above all, we have verified Eq~\ref{eq: lemma: guessing r^(t,n)_k1,k2} when $k+t-2s = n$.

\vspace{1em}
\noindent
\textbf{Step 4: verify Eq~\ref{eq: lemma: guessing r^(t,n)_k1,k2} when $k>0$, $k+t-2s<n$, $1 < t < n-1$ and $s>0$.}
\vspace{1em}

In this case, ${\{r^{(i)}_{t,n,k,t-2s}\}}_{1\le i \le 4}$ are all positive.
By the definition, we have that $r^{(3)}_{t,n,k,t-2s} \le r^{(2)}_{t,n,k,t-2s}$ always holds. Hence, we only need to consider $r^{(1)}_{t,n,k,t-2s}$, $r^{(2)}_{t,n,k,t-2s}$ and $ r^{(4)}_{t,n,k,t-2s}$ and their value can be calculated from Eq~\ref{eq: lemma: guessing r^(t,n)_k1,k2} by induction.

\vspace{1em}
\noindent
\textbf{Step 4.1: verify that $r^{(2)}_{t,n,k,t-2s}$ is always equal to RHS of Eq~\ref{eq: lemma: guessing r^(t,n)_k1,k2}.}
\vspace{1em}

Note that $(t-1, n-1, k-1, s-1)$ is valid and, hence, we can calculate $r^{(2)}_{t,n,k,t-2s}$ via Eq~\ref{eq: lemma: guessing r^(t,n)_k1,k2} by induction.

\textbf{Case 4.1.1: If $k-s<0$ or $k-s>n-t$}, then $(k-1)-(s-1) < 0$ or $(k-1)-(s-1) > (n-1)-(t-1)$.

Therefore,
\begin{equation*}
\begin{aligned}
	r^{(2)}_{t,n,k,t-2s} &= r(M_{k-1, (k-1)+(t-1)-2(s-1)}^{(t-1, n-1)}) + \binom{n-1}{k} + \binom{n-1}{k+t-2s-1} \\
	&= \binom{n-1}{k-1} + \binom{n-1}{k} + \binom{n-1}{k+t-2s-1} + D_{t-1, n-1, k-1, s-1}\\
	% &= \binom{n-1}{k-1} + \binom{n-1}{k} + \binom{n-1}{k+t-2s-1} + \sum \limits_{m= k+t-2s+1}^{n-1} \binom{m-1}{k+t-2s-1} \\
	&= \binom{n}{k} + \sum \limits_{m= k+t-2s+1}^{n} \binom{m-1}{k+t-2s-1} = \binom{n}{k} + D_{t,n,k,s},\\
\end{aligned}
\end{equation*}
where the second equality comes from Eq~\ref{eq: lemma: guessing r^(t,n)_k1,k2} by induction.

% Now consider when $k + t - 2s < n$, $k > 0$ and $s \ge 1$, then $\{r^{(i)}_{t,n,k,t-2s}\}_{1 \le i \le 4}$ are all positive.

\textbf{Case 4.1.2: If $0 \le k-s \le \left\lfloor (n-t)/2 \right\rfloor$}, then $0 \le (k-1)-(s-1) \le \left\lfloor ((n-1)-(t-1))/2 \right\rfloor$.

Therefore,
\begin{equation*}
\begin{aligned}
	r^{(2)}_{t,n,k,t-2s} &= r(M_{k-1, (k-1)+(t-1)-2(s-1)}^{(t-1, n-1)}) + \binom{n-1}{k} + \binom{n-1}{k+t-2s-1} \\
	% &= \binom{n-1}{k-1} + \binom{n-1}{k} + \binom{n-1}{k+t-2s-1} + A^{(1)}_{t-1,n-1,k-1,s-1} + A^{(2)}_{t-1,n-1,k-1,s-1} + C_{t-1,n-1,k-1,s-1}\\
	% &= \binom{n-1}{k-1} + \binom{n-1}{k} + \binom{n-1}{k+t-2s-1} + 	\sum \limits_{a=0}^{k-s-1} \left( \binom{t-s+2a}{t-s+a-1} - \binom{t-s+2a}{a-1} \right) \\
	% & \quad  + \sum \limits_{m = t-3s+1+2k}^{n-s} \left( \binom{m-1}{k+t-2s-1} - \binom{m-1}{k-s-1} \right)  + \sum \limits_{m= n-s+1}^{n-1} \binom{m-1}{k+t-2s-1}\\
	&= \binom{n}{k} + \sum \limits_{a=0}^{k-s-1} \left( \binom{t-s+2a}{t-s+a-1} - \binom{t-s+2a}{a-1} \right) \\
	& \quad  + \sum \limits_{m = t-3s+1+2k}^{n-s} \left( \binom{m-1}{k+t-2s-1} - \binom{m-1}{k-s-1} \right)  + \sum \limits_{m= n-s+1}^{n} \binom{m-1}{k+t-2s-1}\\
	&= \binom{n}{k} + A^{(1)}_{t,n,k,s} + A^{(2)}_{t,n,k,s} + C_{t,n,k,s},\\
\end{aligned}
\end{equation*}
where he second equality comes from Eq~\ref{eq: lemma: guessing r^(t,n)_k1,k2} by induction.

\textbf{Case 4.1.3: If $ \left\lfloor (n-t)/2 \right\rfloor \le k-s \le n-t $}, then $ \left\lfloor ((n-1)-(t-1))/2 \right\rfloor \le (k-1)-(s-1) \le (n-1)-(t-1) $.

Therefore,
\begin{equation*}
\begin{aligned}
	r^{(2)}_{t,n,k,t-2s} &= r(M_{k-1, (k-1)+(t-1)-2(s-1)}^{(t-1, n-1)}) + \binom{n-1}{k} + \binom{n-1}{k+t-2s-1} \\
	&= \binom{n-1}{k-1} + \binom{n-1}{k} + \binom{n-1}{k+t-2s-1} + B_{t-1,n-1,k-1,s-1} + C_{t-1,n-1,k-1,s-1}\\
	% &= \binom{n-1}{k-1} + \binom{n-1}{k} + \binom{n-1}{k+t-2s-1} + 	\sum \limits_{a=0}^{n-t-k+s-1} \left( \binom{t-s+2a}{t-s+a-1} - \binom{t-s+2a}{a-1} \right)  \\
	% & \quad + \sum \limits_{m= n-s+1}^{n-1} \binom{m-1}{k+t-2s-1}\\
	&= \binom{n}{k} + \sum \limits_{a=0}^{n-t-k+s-1} \left( \binom{t-s+2a}{t-s+a-1} - \binom{t-s+2a}{a-1} \right) + \sum \limits_{m= n-s+1}^{n} \binom{m-1}{k+t-2s-1} \\
	&= \binom{n}{k} + B_{t,n,k,s} + C_{t,n,k,s},\\
\end{aligned}
\end{equation*}
where he second equality comes from Eq~\ref{eq: lemma: guessing r^(t,n)_k1,k2} by induction.

Consequently, $r^{(2)}_{t,n,k,t-2s}$ is always equal to RHS of Eq~\ref{eq: lemma: guessing r^(t,n)_k1,k2}.
Hence, we only need to prove $r^{(1)}_{t,n,k,t-2s} \le r^{(2)}_{t,n,k,t-2s}$ and $r^{(4)}_{t,n,k,t-2s} \le r^{(2)}_{t,n,k,t-2s}$.
Then by Eq~\ref{eq: rank recursion formula}, we can finally prove Lemma~\ref{lemma: guessing r^(t,n)_k1,k2}.

\vspace{1em}
\noindent
\textbf{Step 4.2: verify that $r^{(1)}_{t,n,k,t-2s} \le r^{(2)}_{t,n,k,t-2s}$}.
\vspace{1em}

% To check $r^{(1)}_{t,n,k,t-2s} \le r^{(2)}_{t,n,k,t-2s}$, we need to prove:

% \begin{equation*}
% 	\label{eq: check the first term in rank formula}
% 	r(M^{(t, n-1)}_{k-1, k-1+t-2s}) \le r(M^{(t-1, n-1)}_{k-1, k-1+t-1-2(s-1)}) + \binom{n-1}{k+t-2s-1}  .
% \end{equation*}
First, it is easy to verify the inequality when $t=n-2$, then we assume $t < n-2$ in the following.

\textbf{Case 4.2.1: If $(k-1)-s < 0$}, then $(k-1)-(s-1) = k-s \le 0$ and
\begin{equation*}
\begin{aligned}
	& D_{t,n-1,k-1,s} - D_{t-1,n-1,k-1,s-1} \\
	=& \sum \limits_{m= k+t-2s}^{n-1} \binom{m-1}{k+t-2s-2} - \sum \limits_{m= k+t-2s+1}^{n-1} \binom{m-1}{k+t-2s-1} \\
	% =& \sum \limits_{m= k+t-2s+1}^{n} \binom{m-2}{k+t-2s-2} - \sum \limits_{m= k+t-2s+1}^{n-1} \binom{m-1}{k+t-2s-1} \\
	& =  \binom{n-2}{k+t-2s-2} - \sum \limits_{m= k+t-2s+1}^{n-1} \binom{m-2}{k+t-2s-1}.
\end{aligned}
\end{equation*}

Therefore,
\begin{equation}
\begin{aligned}
	 & r^{(1)}_{t,n,k,t-2s} - r^{(2)}_{t,n,k,t-2s} \\
	 =& r(M^{(t, n-1)}_{k-1, k-1+t-2s}) - r(M^{(t-1, n-1)}_{k-1, k-1+t-1-2(s-1)}) - \binom{n-1}{k+t-2s-1}  \\
	 =& -\binom{n-2}{k+t-2s-1} - \sum \limits_{m= k+t-2s+1}^{n-1} \binom{m-2}{k+t-2s-1}  \le 0.
\end{aligned}
\label{eq: appendix A r1 le r2 case 1}
\end{equation}

\textbf{Case 4.2.2: If $ 0 \le (k-1)-s < \lfloor ((n-1)-t)/2 \rfloor $}, then $ 0 \le 1 \le k-s \le \lfloor (n-t-1)/2 \rfloor \le \lfloor (n-t)/2 \rfloor$ and noting that $t-3s+2k-1 \le t-s-1 +(n-t-1) \le n-s-2$
\begin{equation*}
\begin{aligned}
	&A^{(1)}_{t,n-1,k-1,s} - A^{(1)}_{t-1,n-1,k-1,s-1} = - \left( \binom{t+2k-3s-2}{t+k-2s-2} - \binom{t+2k-3s-2}{k-s-2}  \right), \\
	 & A^{(2)}_{t,n-1,k-1,s} - A^{(2)}_{t-1,n-1,k-1,s-1}\\
	=& \sum \limits_{m = t-3s+2k-1}^{(n-1)-s} \left( \binom{m-1}{k+t-2s-2} - \binom{m-1}{k-s-2} \right) \\
&- \sum \limits_{m = t-3s+2k+1}^{n-s} \left( \binom{m-1}{k+t-2s-1} - \binom{m-1}{k-s-1} \right) \\
	% =& \sum \limits_{m = t-3s+2k-1}^{(n-1)-s} \left( \binom{m-1}{k+t-2s-2} - \binom{m-1}{k-s-2} \right) - \sum \limits_{m = t-3s+2k}^{n-s-1} \left( \binom{m}{k+t-2s-1} - \binom{m}{k-s-1} \right) \\
	=& \left( \binom{t-2s+2k-2}{k+t-2s-2} - \binom{t-3d+2k-2}{k-s-2} \right) - \sum \limits_{m = t-3s+2k}^{n-s-1} \left( \binom{m-1}{k+t-2s-1} - \binom{m-1}{k-s-1} \right),\\
	& C_{t,n-1,k-1,s} - C_{t-1,n-1,k-1,s-1} \\
	=& \sum \limits_{m= n-s}^{n-1} \binom{m-1}{k+t-2s-2} - \sum \limits_{m= n-s+1}^{n-1} \binom{m-1}{k+t-2s-1} \\
	% =& \sum \limits_{m= n-s}^{n-1} \binom{m-1}{k+t-2s-2} - \sum \limits_{m= n-s}^{n-2} \binom{m}{k+t-2s-1} \\
	\le & \binom{n-2}{k+t-2s-2} - \sum_{m=n-s}^{n-2}\binom{m-1}{k+t-2s-1}.
\end{aligned}
\end{equation*}
From above three terms, it is easy to get
\begin{equation*}
\begin{aligned}
	& r^{(1)}_{t,n,k,t-2s} - r^{(2)}_{t,n,k,t-2s} \\
	= & r(M^{(t, n-1)}_{k-1, k-1+t-2s}) - r(M^{(t-1, n-1)}_{k-1, k-1+t-1-2(s-1)}) - \binom{n-1}{k-1+t-2s} \\
	= & \sum_{i=1}^{2} (A^{(i)}_{t,n-1,k-1,s} - A^{(i)}_{t-1,n-1,k-1,s-1}) + (C_{t,n-1,k-1,s} - C_{t-1,n-1,k-1,s-1})- \binom{n-1}{k-1+t-2s} \\
	\le & 0.
\end{aligned}
\end{equation*}

\textbf{Case 4.2.3: If $\left\lfloor ((n-1)-t)/2 \right\rfloor \le (k-1)-s \le (n-1)-t $}, then $\left\lfloor (n-t)/2 \right\rfloor \le  \left\lfloor (n-t+1)/2 \right\rfloor \le k-s \le n-t $.

Since $B_{t,n-1,k-1,s} - B_{t-1,n-1,k-1,s-1}=0$, we have,
\begin{equation*}
\begin{aligned}
	& r^{(1)}_{t,n,k,t-2s} - r^{(2)}_{t,n,k,t-2s} \\
		= & r(M^{(t, n-1)}_{k-1, k-1+t-2s}) - r(M^{(t-1, n-1)}_{k-1, k-1+t-1-2(s-1)}) - \binom{n-1}{k-1+t-2s} \\
		=& (B_{t,n-1,k-1,s} - B_{t-1,n-1,k-1,s-1}) + (C_{t,n-1,k-1,s}-C_{t-1,n-1,k-1,s-1})  - \binom{n-1}{k-1+t-2s} \\
		=& C_{t,n-1,k-1,s}-C_{t-1,n-1,k-1,s-1}  - \binom{n-1}{k-1+t-2s} \\
		\le &\binom{n-2}{k+t-2s-2} - \binom{n-1}{k-1+t-2s} - \sum_{m=n-s}^{n-2}\binom{m-1}{k+t-2s-1} \\
		\le& 0 .
\end{aligned}
\end{equation*}
	
\textbf{Case 4.2.4: If $(k-1)-s > (n-1)-t$}, then $k-s > n-t$.

Same as Eq~\ref{eq: appendix A r1 le r2 case 1}, we can verify $r^{(1)}_{t,n,k,t-2s} \le r^{(2)}_{t,n,k,t-2s}$ in this case.

From above all, we have verified $r^{(1)}_{t,n,k,t-2s} \le r^{(2)}_{t,n,k,t-2s}$ in all cases.

\vspace{1em}
\noindent
\textbf{Step 4.3: verify that $r^{(4)}_{t,n,k,t-2s} \le r^{(2)}_{t,n,k,t-2s}$.}
\vspace{1em}

% To achieve that, we need to prove:

% \begin{equation}\label{eq: check rank for the fourth term}
% 	r(M^{(t, n-1)}_{k, k+t-2s}) \le r(M^{(t-1, n-1)}_{k-1, k-1+t-1-2(s-1)}) + \binom{n-1}{k}.
% \end{equation}

% We are going to calculate each item in $r(M^{(t, n-1)}_{k, k+t-2s}) - r(M^{(t-1, n-1)}_{k-1, k-1+t-1-2(s-1)})$.

First, if $t=n-2$, then 
\begin{equation*}
\begin{aligned}
	r^{(4)}_{t,n,k,t-2s} &= \binom{n-1}{k+t-2s-1} + r(M^{(t,n-1)}_{k,k+t-2s}) \\ 
	&= \binom{n-1}{k+t-2s-1}  + \binom{n-1}{k} + \binom{n-1}{k+t-2s} -1. 
\end{aligned}
\end{equation*}

Moreover, we have 

\begin{equation*}
\begin{aligned}
	r^{(2)}_{t,n,k,t-2s} &= \binom{n-1}{k} + \binom{n-1}{k+t-2s-1} + r(M^{(t-1,n-1)}_{k-1,k+t-2s})
\end{aligned}
\end{equation*}

If $k-s <= 0$ or $k-s>= n-t=2$, then $r(M^{(t-1,n-1)}_{k-1,k+t-2s}) \ge D_{t-1,n-1,k-1,s-1} = \binom{n-1}{k+t-2s} - 1$. Otherwise, $k-s = 1$ and then $r(M^{(t-1,n-1)}_{k-1,k+t-2s}) \ge \binom{n-1}{k-1} = \binom{n-1}{k+t-2s-1}$. In both cases, we can derive $r^{(4)}_{t,n,k,t-2s} \le r^{(2)}_{t,n,k,t-2s}$. Thus, in the following, we assume $t < n-2$.

\textbf{Case 4.3.1: If $k-s<0$}, then $(k-1)-(s-1) < 0$.

Therefore,
\begin{equation}
\label{eq: case 4.3.1}
\begin{aligned}
	& r^{(4)}_{t,n,k,t-2s} - r^{(2)}_{t,n,k,t-2s} \\
	=	& r(M^{(t, n-1)}_{k, k+t-2s}) - r(M^{(t-1, n-1)}_{k-1, k-1+t-1-2(s-1)}) - \binom{n-1}{k} \\
	=   & \binom{n-1}{k} - \binom{n-1}{k-1} - \binom{n-1}{k} + (D_{t,n-1,k,s} - D_{t-1,n-1,k-1,s-1}) \\
	\le &  \sum \limits_{m= k+t-2s+1}^{n-1} \binom{m-1}{k+t-2s-1} - \sum \limits_{m= k+t-2s+1}^{n-1} \binom{m-1}{k+t-2s-1} = 0.
\end{aligned}
\end{equation}

\textbf{Case 4.3.2: If $0 \le k-s \le \left\lfloor ((n-1)-t)/2 \right\rfloor$}, then $0 \le k-s \le \left\lfloor (n-t-1)/2 \right\rfloor \le \left\lfloor (n-t)/2 \right\rfloor$ and $2(k-s) \le n-t-1$. We can derive that

\begin{equation*}
\begin{aligned}
	& A^{(1)}_{t,n-1,k,s} - A^{(1)}_{t-1,n-1,k-1,s-1} = 0, \\
	& A^{(2)}_{t,n-1,k,s} - A^{(2)}_{t-1,n-1,k-1,s-1} = - \left(\binom{n-s-1}{k+t-2s-1} - \binom{n-s-1}{k-s-1} \right) , \\
	& C_{t,n-1,k,s} - C_{t-1,n-1,k-1,s-1} \\
	= & \sum \limits_{m= n-s}^{n-1} \binom{m-1}{k+t-2s-1} - \sum \limits_{m= n-s+1}^{n-1} \binom{m-1}{k+t-2s-1} \\
	=& \binom{n-s-1}{k+t-2s-1}.
\end{aligned}
\end{equation*}

Therefore,
\begin{equation*}
\begin{aligned}
	& r^{(4)}_{t,n,k,t-2s} - r^{(2)}_{t,n,k,t-2s} \\
	=	& r(M^{(t, n-1)}_{k, k+t-2s}) - r(M^{(t-1, n-1)}_{k-1, k-1+t-1-2(s-1)}) - \binom{n-1}{k} \\
	=   & - \binom{n-1}{k-1} + \sum_{i=1}^{2} (A^{(i)}_{t,n-1,k,s} - A^{(i)}_{t-1,n-1,k-1,s-1}) + (C_{t,n-1,k,s} - C_{t-1,n-1,k-1,s-1}) \\
	= 	& \binom{n-s-1}{k-s-1} - \binom{n-1}{k-1} \le 0.
\end{aligned}
\end{equation*}

\textbf{Case 4.3.3: If $\left\lfloor ((n-1)-t)/2 \right\rfloor < k-s \le (n-1)-t $}, then $\left\lfloor (n-t)/2 \right\rfloor \le \left\lfloor (n-t+1)/2 \right\rfloor \le k-s \le n-t$.

We have
\begin{equation*}
\begin{aligned}
	& B_{t,n-1,k,s} - B_{t-1,n-1,k-1,s-1} \\
	= & \sum \limits_{a=0}^{n-t-k+s-2} \left( \binom{t-s+2a}{t-s+a-1} - \binom{t-s+2a}{a-1} \right) - \sum \limits_{a=0}^{n-t-k+s-1} \left( \binom{t-s+2a}{t-s+a-1} - \binom{t-s+2a}{a-1} \right)\\
 &= -\binom{2n-2k-t+s-2}{n-k-2} + \binom{2n-2k-t+s-2}{n-t-k+s-2}.
\end{aligned}
\end{equation*}

Write $\lambda \triangleq k-s - \frac{n-t}{2}$, then $\lambda \ge 0$ by the condition.
Therefore,
\begin{equation*}
\begin{aligned}
	& r^{(4)}_{t,n,k,t-2s} - r^{(2)}_{t,n,k,t-2s} \\
	=	& r(M^{(t, n-1)}_{k, k+t-2s}) - r(M^{(t-1, n-1)}_{k-1, k-1+t-1-2(s-1)}) - \binom{n-1}{k} \\
	=   & - \binom{n-1}{k-1} +  (B_{t,n-1,k,s} - B_{t-1,n-1,k-1,s-1}) + (C_{t,n-1,k,s} - C_{t-1,n-1,k-1,s-1}) \\
	\le	& \binom{n-s-1}{k+t-2s-1} - \binom{n-1}{k-1} -\binom{2n-2k-t+s-2}{n-k-2} + \binom{2n-2k-t+s-2}{n-t-k+s-2} \\ 
	\le & \binom{n-s-1}{k-s-2\lambda} - \binom{n-1}{k-1} - \binom{n-s-2-2\lambda}{k-s-2\lambda} + \binom{n-s-2-2\lambda}{k-s-2\lambda -2}.
\end{aligned}
\end{equation*}

If $\lambda = 0$, it is easy to verify $r^{(4)}_{t,n,k,t-2s} - r^{(2)}_{t,n,k,t-2s} \le 0$ with the above equation. If $\lambda \ge 1$, then we first claim that $\binom{n-s-1}{k-s-2\lambda} \le \binom{n-s-1}{k-s-1}$.
If $k-s-1 \le \frac{n-s-1}{2}$, it holds naturally.
Otherwise, when $k-s-1 > \frac{n-s-1}{2}$, note that $k-s-2\lambda \le \frac{n-s-1}{2}$ holds, since $t \ge 2s \ge 2$. It suffices to prove that $k-s-1 - \frac{n-s-1}{2} \le \frac{n-s-1}{2} - (k-s-2\lambda )$ which is easy to verify.
As a consequence, we have 

\begin{equation*}
\begin{aligned}
	& r^{(4)}_{t,n,k,t-2s} - r^{(2)}_{t,n,k,t-2s} \\
	\le &\binom{n-s-1}{k+t-2s-1} - \binom{n-1}{k-1} \\
	=& \binom{n-s-1}{k-s-2\lambda} - \binom{n-1}{k-1} \\
	\le & \binom{n-s-1}{k-s-1} - \binom{n-1}{k-1} \le 0.
\end{aligned}
\end{equation*}

\textbf{Case 4.3.4: If $k-s > (n-1)-t$}, then $k-s \ge n-t$.

Same as Eq~\ref{eq: case 4.3.1}, we can verify $r^{(4)}_{t,n,k,t-2s} \le r^{(2)}_{t,n,k,t-2s}$ in this case.
% \begin{equation*}
% \begin{aligned}
% 	& r^{(4)}_{t,n,k,t-2s} - r^{(2)}_{t,n,k,t-2s} \\
% 	=& r(M^{(t, n-1)}_{k, k+t-2s}) - r(M^{(t-1, n-1)}_{k-1, k-1+t-1-2(s-1)}) - \binom{n-1}{k} \\
% 	=& -\binom{n-1}{k-1} + D_{t,n-1,k,s} - D_{t-1,n-1,k-1,s-1} \\
% 	=& -\binom{n-1}{k-1} + \sum \limits_{m= k+t-2s+1}^{n-1} \binom{m-1}{k+t-2s-1} - \sum \limits_{m= k+t-2s+1}^{n-1} \binom{m-1}{k+t-2s-1} \\
% 	=& -\binom{n-1}{k-1} \le 0.
% \end{aligned}
% \end{equation*}

From above all, we have verified $r^{(4)}_{t,n,k,t-2s} \le r^{(2)}_{t,n,k,t-2s}$ in all cases.

\vspace{1em}
\noindent
\textbf{Step 5: verify Eq~\ref{eq: lemma: guessing r^(t,n)_k1,k2} when $k>0$, $k+t-2s<n$ and $s=0$.} 
% -------- middle situation ---------------
% 最后一种再中间的情况
In this case, $r^{(2)}_{t,n,k,t-2s} = -\infty$ and ${\{r^{(i)}_{t,n,k,t-2s}\}}_{1=1,3,4}$ are positive.

\textbf{Case 5.1: If $1 \le k < \lfloor (n-t)/2 \rfloor$}, then we have $k \le \lfloor ((n-1)-t)/2 \rfloor$ and $2k \le n-t-1$.
We first verify that $r^{(4)}_{t,n,k,t-2s}$ is always equal to RHS of Eq~\ref{eq: lemma: guessing r^(t,n)_k1,k2}.

\begin{equation*}
\begin{aligned}
	r^{(4)}_{t,n,k,t-2s} &= \binom{n-1}{k+t-2s-1} + r(M^{(t,n-1)}_{k, k+t-2s}) \\ 
	&= \binom{n-1}{k+t-2s-1} + \binom{n-1}{k} + A^{(1)}_{t,n-1,k,s}+ A^{(2)}_{t,n-1,k,s} + C_{t,n-1,k,s}.
\end{aligned}
\end{equation*}

Note that $A^{(1)}_{t,n-1,k,s} = A^{(1)}_{t,n,k,s}$, $C^{(1)}_{t,n-1,k,s} = 0 = C_{t,n,k,s}$ when $s=0$, and

\begin{equation*}
\begin{aligned}
	A^{(2)}_{t,n-1,k,s} & =  A^{(2)}_{t,n,k,s} - \left( \binom{n-1}{k+t-2s-1} - \binom{n-1}{k-s-1} \right).
\end{aligned}
\end{equation*}

Then we can derive that 

\begin{equation*}
\begin{aligned}
	r^{(4)}_{t,n,k,t-2s} &=  \binom{n}{k} + A^{(1)}_{t,n,k,s}+ A^{(2)}_{t,n,k,s} + C_{t,n,k,s}.
\end{aligned}
\end{equation*}

Consequently, $r^{(4)}_{t,n,k,t-2s}$ is always equal to RHS of Eq~\ref{eq: lemma: guessing r^(t,n)_k1,k2} and we then only need to prove $r^{(1)}_{t,n,k,t-2s} \le r^{(4)}_{t,n,k,t-2s}$ and $r^{(3)}_{t,n,k,t-2s} \le r^{(4)}_{t,n,k,t-2s}$. We omit the details here.

\vspace{.2cm}
\textbf{Case 5.2: If $\lfloor (n-t)/2 \rfloor < k \le n-t $}, then $ \lfloor (n-1-t)/2 \rfloor \le k-1 \le (n-1) -t$.
We first verify that $r^{(1)}_{t,n,k,t-2s}$ is always equal to RHS of Eq~\ref{eq: lemma: guessing r^(t,n)_k1,k2}.

\begin{equation*}
\begin{aligned}
	r^{(1)}_{t,n,k,t-2s} &= \binom{n-1}{k} + r(M^{(t,n-1)}_{k-1, k+t-2s-1}) \\ 
	&= \binom{n-1}{k} + \binom{n-1}{k-1} + B_{t,n-1,k-1,s} + C_{t,n-1,k-1,s} \\ 
	& = \binom{n}{k} + B_{t,n,k,s} + C_{t,n,k,s}. \\ 
\end{aligned}
\end{equation*}

Consequently, $r^{(1)}_{t,n,k,t-2s}$ is always equal to RHS of Eq~\ref{eq: lemma: guessing r^(t,n)_k1,k2} and we then only need to prove $r^{(3)}_{t,n,k,t-2s} \le r^{(1)}_{t,n,k,t-2s}$ and $r^{(4)}_{t,n,k,t-2s} \le r^{(1)}_{t,n,k,t-2s}$. We omit the details here.

\vspace{1em}
\noindent
\textbf{Step 6: verify the base case of induction.}
\vspace{1em}

Consider when $t+n+k+s=2$ and show that $(t,n,k,s)$ is valid.
The only case is $t=1,n=1,k=0,s=0$ and, then, by Step 1, we can verify Eq~\ref{eq: lemma: guessing r^(t,n)_k1,k2} in this case.
% Then $t=1,n=1,k=0,s=0$, $r(M^{(t,n)}_{k,k+t-2s}) = r(M^{(1,1)}_{0,1}) = 1$ which matches the result of Lemma~\ref{lemma: guessing r^(t,n)_k1,k2}.

By induction, we have finally proved Lemma~\ref{lemma: guessing r^(t,n)_k1,k2}.  
\hfill$\square$

% When $t=1, s =0, k=0,n=1$, $r(M^{(1,1)}_{0,1}) = 1$ matches the result of Eq~\ref{lemma: guessing r^(t,n)_k1,k2}.
% From above all, Lemma~\ref{lemma: guessing r^(t,n)_k1,k2} can be proved via induction.

% ------------------------------------------------------------------------------
\section*{Appendix B:\@ Proof of Lemma~\ref{lemma: bandwidth guessing M^{(t,n)}}}

We give the proof of Lemma~\ref{lemma: bandwidth guessing M^{(t,n)}} via Eq~\ref{eq: bandwidth recursion formula}.
Note that when $t=1$, $bw(M^{(1,n)}) = \sum_{m=0}^{n-1}\binom{m}{\lfloor m/2 \rfloor}$~\cite{pw_Hypercube_harper1966optimal}.
%  which matches RHS of Eq~\ref{eq: bandwidth guessing M^{(t,n)}}.

If $n = 2l+1$ for some integer $l$, then
\begin{equation*}
\begin{aligned}
	& \sum_{m=0}^{n-1} \binom{m}{\lfloor m/2 \rfloor} \\
	=& \sum_{a=0}^{l} \binom{2a}{a} + \sum_{a=0}^{l-1}\binom{2a+1}{a} \\
	=& \binom{2l+1}{l} + \sum_{a=0}^{l-1} \left( \binom{2a+1}{a} - \binom{2a+1}{a-1} \right)\\
	=& \sum_{k = \left\lfloor (n-t)/2 \right\rfloor }^{\left\lfloor (n-t)/2 \right\rfloor +t -1} \binom{n}{k} + \sum_{a=0}^{\left\lfloor (n-t-1)/2 \right\rfloor } \left( \binom{t+2a}{t+a-1} - \binom{t+2a}{a-1} \right) ,
\end{aligned}
\end{equation*}
where the second equality can be proved by induction on $l$.

If $n = 2l$ for some integer $l$, then
\begin{equation*}
\begin{aligned}
	& \sum_{m=0}^{n-1} \binom{m}{\lfloor m/2 \rfloor} \\
	=& \sum_{a=0}^{l-1} \binom{2a}{a} + \sum_{a=0}^{l-1}\binom{2a+1}{a} \\
	=& \binom{2l}{l} + \sum_{a=0}^{l-1} \left( \binom{2a+1}{a} - \binom{2a+1}{a-1} \right)\\
	=& \sum_{k = \left\lfloor (n-t)/2 \right\rfloor }^{\left\lfloor (n-t)/2 \right\rfloor +t -1} \binom{n}{k} + \sum_{a=0}^{\left\lfloor (n-t-1)/2 \right\rfloor } \left( \binom{t+2a}{t+a-1} - \binom{t+2a}{a-1} \right) ,
\end{aligned}
\end{equation*}
where the second equality can be proved by induction on $l$ as well.

Therefore, when $t=1$, Eq~\ref{eq: bandwidth guessing M^{(t,n)}} holds. Then we intend to prove Lemma~\ref{lemma: bandwidth guessing M^{(t,n)}} by induction on $t$.
Suppose Eq~\ref{eq: bandwidth guessing M^{(t,n)}} holds for $t < T$, now consider when $t=T$.

First consider the situation when $k= \left\lfloor (n-t)/2 \right\rfloor $ and $p=t$. Then we have
\begin{equation}
	\label{eq: appendix value of bw(M^{(t,n)})}
\begin{aligned}
	\sum_{q=1}^{t-1} \binom{n}{k+q} + r(M^{(t,n)}_{k,k+t}) &= \sum_{q=0}^{t-1} \binom{n}{k+q} +  \sum_{a=0}^{\left\lfloor (n-t-1)/2 \right\rfloor } \left( \binom{t+2a}{t+a-1} - \binom{t+2a}{a-1}. \right)
\end{aligned}
\end{equation}
The result matches the RHS of Eq~\ref{eq: bandwidth guessing M^{(t,n)}}. In the following, we only need to prove other term is no more than this value.

For convenience, define $\widetilde{r} (M^{(t,n)}_{k,k+p}) = \sum_{q = 1}^{p-1} \binom{n}{k+q} + r(M^{(t,n)}_{k,k+p})$ when $1 \le p \le t$ and $\widetilde{r} (M^{(t,n)}_{k,k}) = bw(M^{(t,n)}_{k,k}) $.
Actually, $\widetilde{r} (M^{(t,n)}_{k,k+p})$ where $0 \le k \le k+p \le n$ is exactly the maximum Manhattan distance from a nonzero element of $M^{(t,n)}_{k,k+p}$ in $M$ to the main diagonal of the matrix $M$.
There exists a consist expression for $\widetilde{r} (M^{(t,n)}_{k,k+p})$ as following.
\begin{equation*}
	% \label{eq: append: consist formula for tilde rank}
	\widetilde{r} (M^{(t,n)}_{k,k+p}) = \sum_{q=0}^{p-1} \binom{n}{k+q} - \binom{n}{k} + r(M^{(t,n)}_{k,k+p}).
\end{equation*}

Then, our purpose is to prove $\widetilde{r} (M^{(t,n)}_{k,k+p})$ is no larger than Eq~\ref{eq: appendix value of bw(M^{(t,n)})} for any integer $k,p$ satisfying $0 \le k \le k+p \le n$ and $0 \le p \le t$.

From definition, when $k = 0$, we have that $\widetilde{r}(M^{(t,n)}_{k,k+p})$ reach its maximal when $p=t$.
Similarly, when $k+p=n$, $\widetilde{r}(M^{(t,n)}_{k,k+p})$ reach its maximal when $k = n-t$, that is, $p=t$.
When $k>0$ and $k+p<n$, we have $\widetilde{r}(M^{(t,n)}_{k,k+p}) \le \widetilde{r}(M^{(t,n)}_{k-1,k+p+1})$ when $p + 2 \le t$.
Hence, we only need to prove $\widetilde{r} (M^{(t,n)}_{k,k+p})$ is no larger than Eq~\ref{eq: appendix value of bw(M^{(t,n)})} for the following two cases:

\begin{enumerate}[(1)]
	% \item $k=0$ or $k+p=n$.
	\item $p = t-1$,
	\item $p = t$.
\end{enumerate}

% \textbf{Case (1): }When $k=0$ or $k+p=n$, we verify the conclusion via calculation.

% If $k=0$, then, by the definition, $\widetilde{r}(M^{(t,n)}_{k,k+p})$ reach its maximal when $p=t$.
% Similarly, if $k+p=n$, by the definition, $\widetilde{r}(M^{(t,n)}_{k,k+p})$ reach its maximal when $k = n-t$, that is, $p=t$.
% Then this case

\textbf{Case 1: } $p = t-1$. In this case, we have $ \widetilde{r} (M^{(t,n)}_{k,k+p}) = \widetilde{r} (M^{(t-1,n)}_{k,k+p}) \le bw(M^{(t-1,n)})$ by definition.
Since the value of $bw(M^{(t-1,n)})$ can be calculated from Lemma~\ref{lemma: bandwidth guessing M^{(t,n)}} by induction, and it is not hard to verify the RHS of Eq.~\ref{eq: bandwidth guessing M^{(t,n)}} is increasing with respect to $t$.
Then we can reach our conclusion in this case.

\textbf{Case 2: } $p = t$.

If $k \le \left\lfloor (n-t)/2 \right\rfloor$, then
\begin{equation*}
\begin{aligned}
	&\widetilde{r} (M^{(t,n)}_{k,k+t}) - \widetilde{r} (M^{(t,n)}_{k-1,k+t-1}) \\
	=& \binom{n}{k+t-1} - \binom{n}{k} + \binom{n}{k} - \binom{n}{k-1} +  \left( \binom{2k+t-2}{k+t-2} - \binom{2k+t-2}{k-2} \right) \\
	 &+ \sum \limits_{m = t+1+2k}^{n-s} \left( \binom{m-1}{k+t-1} - \binom{m-1}{k-1} \right) - \sum \limits_{m = t-1+2k}^{n-s} \left( \binom{m-1}{k+t-2} - \binom{m-1}{k-2} \right) \\
	=& \binom{n}{k+t-1} - \binom{n}{k-1} + \sum \limits_{m = t+2k}^{n-1} \left( \binom{m-1}{k+t-2} - \binom{m-1}{k-2} \right) - \left( \binom{n-1}{k+t-2} - \binom{n-1}{k-2} \right) \\
	=& \binom{n-1}{k-t-1} - \binom{n-1}{k-1} + \sum \limits_{m = t+2k}^{n-1} \left( \binom{m-1}{k+t-2} - \binom{m-1}{k-2} \right) \\
	\geq & 0.
\end{aligned}
\end{equation*}

It shows that when $k \le \left\lfloor (n-t)/2 \right\rfloor$, $\widetilde{r} (M^{(t,n)}_{k,k+t})$ reaches its maximum at $k = \left\lfloor (n-t)/2 \right\rfloor$.
If $k \geq \left\lfloor (n-t)/2 \right\rfloor$, then
\begin{equation*}
\begin{aligned}
	&\widetilde{r} (M^{(t,n)}_{k,k+t}) - \widetilde{r} (M^{(t,n)}_{k+1,k+t+1}) \\
	=& \binom{n}{k+1} - \binom{n}{k+t} + \binom{n}{k} - \binom{n}{k+1} + \left( \binom{2n-t-2k-4}{n-k-3} - \binom{2n-t-2k-4}{n-t-k-3}  \right) \\
	\geq & 0.
\end{aligned}
\end{equation*}
Combining the above situations, we have proved that $\widetilde{r} (M^{(t,n)}_{k,k+t})$ reaches its maximum at $k = \left\lfloor (n-t)/2 \right\rfloor$.
That is, the maximum value of is $\widetilde{r} (M^{(t,n)}_{k,k+t})$ as Eq~\ref{eq: appendix value of bw(M^{(t,n)})}.

From above all, we have proved Lemma~\ref{lemma: bandwidth guessing M^{(t,n)}}.  
\hfill$\square$

\end{document}